\newcommand{\A}{\mathbb{A}}
\newcommand{\NN}{\mathbb{N}}
\newcommand{\PP}{\mathbb{P}}
\newcommand{\ZZ}{\mathbb{Z}}
\newcommand{\CC}{\mathbb{C}}
\DeclareMathOperator{\GL}{GL}
\DeclareMathOperator{\Pic}{Pic}
\DeclareMathOperator{\transpose}{^T}
\DeclareMathOperator{\variety}{V}
\DeclareMathOperator{\Gr}{Gr}
\begin{document}
\title*{Secants, Bitangents, and Their Congruences}
\author{Kathl\'en Kohn, Bernt Ivar Utst{\o}l N{\o}dland, and Paolo Tripoli}
\institute{
  Kathl\'en Kohn \at 
  Institute of Mathematics, Technische Universit\"at Berlin, Sekretariat MA
  6-2, Stra{\ss}e des 17. Juni 136, 10623 Berlin, Germany,
  \email{kohn@math.tu-berlin.de}
  \and 
  Bernt Ivar Utst\o{}l N\o{}dland \at 
  Department of Mathematics, University of Oslo, Moltke Moes vei 35, Niels
  Henrik Abels hus, 0851 Oslo, Norway, \email{berntin@math.uio.no}
  \and 
  Paolo Tripoli \at 
  Mathematics Institute, University of Warwick, Zeeman Building, Coventry, CV4
  7AL, United Kingdom, \email{paolo.tripoli@nottingham.ac.uk}}

\maketitle

\abstract{ A congruence is a surface in the Grassmannian\index{Grassmannian}
  $\Gr(1,\PP^3)$ of lines in projective $3$-space. To a space curve $C$, we
  associate the Chow hypersurface in $\Gr(1,\PP^3)$ consisting of all lines
  which intersect $C$. We compute the singular locus of this hypersurface,
  which contains the congruence of all secants to $C$. A surface $S$ in
  $\PP^3$ defines the Hurwitz hypersurface in $\Gr(1,\PP^3)$ of all lines
  which are tangent to $S$. We show that its singular locus has two components
  for general enough $S$: the congruence of bitangents and the congruence of
  inflectional tangents. We give new proofs for the bidegrees of the secant,
  bitangent and inflectional congruences, using geometric techniques such as
  duality, polar loci and projections.  We also study the singularities of
  these congruences.}

\section{Introduction}
\label{05:start}
\label{05sec:intro}

The aim of this article is to study subvarieties of Grassmannians which arise
naturally from subvarieties of complex projective $3$-space $\PP^3$.  We are
mostly interested in threefolds and surfaces in $\Gr(1,\PP^3)$.  These are
classically known as \emph{line complexes} and
\emph{congruences}\index{congruence}.  We determine their classes in the Chow
ring\index{Chow!ring} of $\Gr(1,\PP^3)$ and their singular loci.  Throughout
the paper, we use the phrase `singular points of a congruence' to simply refer
to its singularities as a subvariety of the Grassmannian $\Gr(1,\PP^3)$. In
older literature, this phrase refers to points in $\PP^3$ lying on infinitely
many lines of the congruence; nowadays, these are called \emph{fundamental
  points}.

The \emph{Chow hypersurface}\index{Chow!hypersurface}
$\operatorname{CH}_0(C) \subset \Gr(1,\PP^3)$ of a curve $C \subset \PP^3$ is
the set of all lines in $\PP^3$ that intersect $C$, and the \emph{Hurwitz
  hypersurface}\index{Hurwitz~hypersurface}
$\operatorname{CH}_1(S) \subset \Gr(1,\PP^3)$ of a surface $S \subset \PP^3$
is the Zariski closure of the set of all lines in $\PP^3$ that are tangent to
$S$ at a smooth point.  Our main results are consolidated in the following
theorem.

\begin{theorem} 
  \label{05mainthm} 
  Let $C \subset \PP^3$ be a nondegenerate curve\index{curve!space} of degree
  $d$ and geometric genus $g$ having only ordinary singularities
  $x_1, x_2, \dotsc, x_s$ with multiplicities $r_1,r_2, \dotsc, r_s$.  If
  $\operatorname{Sec}(C)$ denotes the locus of secant lines to $C$, then the
  singular locus of $\operatorname{CH}_0(C)$ is
  $\operatorname{Sec}(C) \cup \bigcup_{i=1}^s \{ L \in \Gr(1,\PP^3) : x_i \in
  L \}$, the bidegree\index{bidegree} of $\operatorname{Sec}(C)$ is
  \[
    \Bigl( \tfrac{1}{2}(d-1)(d-2) - g - \textstyle\sum\limits_{i=1}^s
    \tfrac{1}{2}r_i (r_i-1), \tfrac{1}{2}d(d-1) \Bigr) \, ,
  \]
  and the singular locus of $\operatorname{Sec}(C)$, when $C$ is smooth,
  consists of all lines that intersect $C$ with total multiplicity at least
  $3$.

  Let $S \subset \PP^3$ be a general surface of degree $d$ with $d \geq 4$.
  If $\operatorname{Bit}(S)$ denotes the locus of bitangents\index{bitangent}
  to $S$ and $\operatorname{Infl}(S)$ denotes the locus of inflectional
  tangents to $S$, then the singular locus of $\operatorname{CH}_1(S)$ is
  $\operatorname{Bit}(S) \cup \operatorname{Infl}(S)$, the bidegree of
  $\operatorname{Bit}(S)$ is
  \[
    \bigl( \tfrac{1}{2}d(d-1)(d-2)(d-3), \tfrac{1}{2}d(d-2)(d-3)(d+3) \bigr)
    \, ,
  \] 
  the bidegree of $\operatorname{Infl}(S)$ is
  $\bigl( d(d-1)(d-2), 3d(d-2) \bigr)$, and the singular locus of
  $\operatorname{Infl}(S)$ consists of all lines that are inflectional
  tangents at at least two points of $S$ or intersect $S$ with multiplicity at
  least $4$ at some point.
\end{theorem}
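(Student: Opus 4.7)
The plan is to split the theorem into its seven sub-assertions and handle them with two tools: local tangent-space computations in Plücker coordinates (for the singular loci) and intersection with the Schubert classes $\sigma_2$ and $\sigma_{1,1}$ (for the bidegrees). I would first write a local equation of $\operatorname{CH}_0(C)$ near a line $L$ as the resultant of the defining ideal of $C$ with a moving line parametrized in a Plücker chart. At a line meeting $C$ transversally in a single smooth point, one partial derivative of this resultant equals a nonzero intersection multiplicity; if $L$ is a secant, or passes through an ordinary singularity $x_i$, the resultant acquires a repeated factor and its Jacobian vanishes. The same template applied to $\operatorname{CH}_1(S)$ shows that a tangent line $L$ is singular iff it is tangent at two distinct smooth points (bitangent) or its contact order at one smooth point is at least $3$ (inflectional); generality of $S$ guarantees the two loci are irreducible and distinct, which I would confirm using their incidence-variety descriptions.

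\textbf{Bidegrees via Schubert calculus.} Recall that the bidegree of a surface $X\subset \Gr(1,\PP^3)$ is the pair $(\alpha,\beta)$, where $\alpha$ counts lines of $X$ through a fixed general point $p\in\PP^3$ and $\beta$ counts lines of $X$ contained in a fixed general plane $H\subset\PP^3$. For $\operatorname{Sec}(C)$, the count $\beta$ is immediate: $C\cap H$ consists of $d$ general points and any two of them span a secant, giving $\binom{d}{2}$. The count $\alpha$ equals the number of apparent double points of the projection $\pi_p(C)\subset \PP^2$, computed by the double-point formula as $\tfrac12(d-1)(d-2)-g$ for a smooth curve, with a correction $-\sum \tfrac12 r_i(r_i-1)$ coming from the images of the ordinary singularities. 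For $\operatorname{Bit}(S)$ and $\operatorname{Infl}(S)$, the section $S\cap H$ is a smooth plane curve of degree $d$, so $\beta$ equals the Plücker bitangent count $\tfrac12 d(d-2)(d-3)(d+3)$ and the flex count $3d(d-2)$, respectively. For $\alpha$ I would project from $p$ and identify the desired lines with nodes, respectively cusps, of the branch curve of $\pi_p|_S$, a plane curve of degree $d(d-1)$; applying Plücker's formulas to this curve yields $\tfrac12 d(d-1)(d-2)(d-3)$ and $d(d-1)(d-2)$.

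\textbf{Singular loci of $\operatorname{Sec}(C)$ and $\operatorname{Infl}(S)$; main obstacle.} A tangent-space calculation at a genuine bisecant $L$ of a smooth $C$ identifies $T_L \operatorname{Sec}(C)$ with first-order deformations of the unordered pair of intersection points along $C$, which has the expected dimension; additional intersections or higher multiplicities produce extra tangent directions, giving the stated singular locus. An analogous argument for $\operatorname{Infl}(S)$, using its description as the image of the incidence variety $\{(q,L) : (L\cdot S)_q \geq 3\}$, yields the claim that the singular locus consists of biflex lines and hyperflex lines. The main obstacle I expect is the first bidegree coordinate of $\operatorname{Bit}(S)$: one must carefully match bitangents through $p$ with nodes (and not, say, cusps) of the branch curve, and apportion the Plücker invariants of that branch curve correctly. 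This is where the generality of $S$ and a careful bookkeeping of residual contributions will be most delicate.
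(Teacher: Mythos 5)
Your overall plan coincides with the paper's: class of each congruence via a general plane section, order via projection from a general point, Pl\"ucker-type formulas for the counts, and local Jacobian computations for the singular loci. Two remarks are in order. First, for the singular loci of $\operatorname{CH}_0(C)$ and $\operatorname{CH}_1(S)$ the paper does not differentiate a resultant presentation of the Chow/Hurwitz form (which is awkward to make precise when $C$ is not a complete intersection); instead it proves a clean general lemma --- for a finite birational surjection $f\colon X\to Y$, the target is smooth at $y$ iff the fibre is a single point at which $X$ is smooth and $d_xf$ is injective --- and applies it to the incidence variety $\{(v,L): v\in L\}$ (resp.\ $\{(v,L): v\in L\subset T_v(S)\}$). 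This is equivalent in content to your computation but packages it so that the same argument also handles $\operatorname{Infl}(S)$ later. Note also that the resulting statement is slightly subtler than ``secant or through a singular point'': simple tangent lines at smooth points are singular points of $\operatorname{CH}_0(C)$ because they lie in the closure $\operatorname{Sec}(C)$, and a line through an ordinary singularity $x_i$ need not lie in $\operatorname{Sec}(C)$, which is why the union in the statement genuinely has the extra components $\{L: x_i\in L\}$.

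The one genuine gap is the step you yourself flag as the main obstacle: the order of $\operatorname{Bit}(S)$. ``Applying Pl\"ucker's formulas to the branch curve $C'$ of degree $d(d-1)$'' does not by itself determine the pair $(\kappa',\delta')$ of cusps and nodes; you need two independent numerical inputs about $C'$ before the formula $\deg((C')^\vee)=\deg C'(\deg C'-1)-3\kappa'-2\delta'$ can be solved. The paper supplies exactly these: (i) a direct count of the cusps, $\kappa'=\#\variety(f,g,h)=d(d-1)(d-2)$ where $g$ and $h$ are the first and second polars of $S$ with respect to the centre of projection $y$ (this simultaneously gives the order of $\operatorname{Infl}(S)$, identifying inflectional tangents through $y$ with tangent lines of the polar curve through $y$); and (ii) the identification $\deg((C')^\vee)=\deg(S^\vee)=d(d-1)^2$, obtained by showing that tangent lines of the polar curve meeting a general line through $y$ correspond bijectively to tangent planes of $S$ containing that line. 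One must also verify that the double points of $C'$ coming from bitangents are honest nodes (two distinct tangent branches), which the paper does by a dimension count showing the union of lines lying in bitangent planes is a proper surface missing the general point $y$. Without pinning down these inputs, the Pl\"ucker bookkeeping cannot be completed, so this part of your proposal needs to be filled in along these (or equivalent, e.g.\ genus-of-the-polar-curve) lines.
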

The bidegree of $\operatorname{Infl}(S)$ also appears in
\cite[Prop.~4.1]{05petitjean}. The bidegrees of $\operatorname{Bit}(S)$,
$\operatorname{Infl}(S)$, and $\operatorname{Sec}(C)$, for smooth $C$, already
appear in \cite{05arrondo}, a paper to which we owe a great
debt. Nevertheless, we give new, more geometric, proofs not relying on Chern
class techniques.  The singular loci of $\operatorname{Sec}(C)$,
$\operatorname{Bit}(S)$, and $\operatorname{Infl}(S)$ are partially described
in Lemma~2.3, Lemma~4.3, and Lemma~4.6 in \cite{05arrondo}.

Using duality, we establish some relationships of the varieties in
Theorem~\ref{05mainthm}.

\begin{theorem}
  \label{05thm:second}
  If $C$ is a nondegenerate smooth space curve, then the secant lines of $C$
  are dual to the bitangent lines of the dual surface $C^\vee$ and the tangent
  lines of $C$ are dual to the inflectional tangent lines of $C^\vee$.
\end{theorem}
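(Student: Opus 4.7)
My plan is to exploit the Grassmann duality isomorphism $\Gr(1,\PP^3) \xrightarrow{\sim} \Gr(1,(\PP^3)^\vee)$ that sends a line $L$ to the pencil $L^\vee$ of hyperplanes containing $L$, combined with the classical reflexivity $(C^\vee)^\vee = C$. Since $C$ is smooth and nondegenerate, $C^\vee$ is a developable ruled surface in $(\PP^3)^\vee$ whose rulings are exactly the pencils $(T_pC)^\vee$ indexed by $p \in C$. Biduality identifies, at a smooth point $H$ on the ruling $(T_pC)^\vee$, the projective tangent plane to $C^\vee$ at $H$ with the $\PP^2 \subset (\PP^3)^\vee$ consisting of all hyperplanes through $p$; in particular this tangent plane depends only on the ruling, and every smooth point of $C^\vee$ is parabolic with unique asymptotic direction along its own ruling.

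For the secant--bitangent equivalence, I would let $L$ be a secant of $C$ through distinct points $p,q \in C$ and observe that $L^\vee$ meets the rulings $(T_pC)^\vee$ and $(T_qC)^\vee$ in the single planes $H_p := \operatorname{span}(L,T_pC)$ and $H_q := \operatorname{span}(L,T_qC)$. Every hyperplane in $L^\vee$ contains $L \ni p$, so $L^\vee$ lies inside the tangent plane to $C^\vee$ at $H_p$; likewise at $H_q$. Hence $L^\vee$ is tangent to $C^\vee$ at two distinct points, i.e.\ a bitangent. The converse runs the same argument in reverse: if $\ell$ is bitangent to $C^\vee$ at points on distinct rulings $(T_{p_1}C)^\vee$ and $(T_{p_2}C)^\vee$, then via the identification of tangent planes every hyperplane in $\ell$ must pass through both $p_1$ and $p_2$, so $\ell^\vee$ is the secant of $C$ through these points.

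For the tangent--inflectional equivalence, I note that if $L = T_pC$ then tautologically $L^\vee = (T_pC)^\vee$ is itself a ruling of $C^\vee$. Since every smooth point of the developable surface $C^\vee$ is parabolic, the rulings realise the unique asymptotic direction at each such point and so are precisely the inflectional tangents of $C^\vee$ at smooth points; conversely every such ruling arises in this way from a unique tangent line $T_pC$, giving a bijection between the family of tangent lines of $C$ and the family of inflectional tangents of $C^\vee$.

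The step I expect to be most delicate is the tangent--inflectional direction. Because $C^\vee$ is developable rather than a generic surface, Theorem~\ref{05mainthm} does not directly identify $\operatorname{Infl}(C^\vee)$, so one cannot simply quote it. Instead one must work locally with the second fundamental form of $C^\vee$ at a smooth point of a ruling to check that the ruling is the only line with contact order $\geq 3$ there, and then pass to the closure in $\Gr(1,(\PP^3)^\vee)$ along the singular edge of regression of $C^\vee$, which corresponds to the osculating planes of $C$.
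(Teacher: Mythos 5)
Your proposal is correct, and it splits naturally into a half that mirrors the paper and a half that does not. The paper proves this statement as Theorem~\ref{05thm:dual}, and for the secant--bitangent correspondence your argument is essentially the paper's: both identify the two tangency points of $L^\vee$ on $C^\vee$ as the spans of $L$ with $T_x(C)$ and $T_y(C)$, and both use the Biduality Theorem to see that the tangent plane of $C^\vee$ at such a point is the web of planes through the point of tangency, which visibly contains $L^\vee$. (One small case the paper treats and your converse skips: a line tangent to $C^\vee$ at two smooth points of the \emph{same} ruling dualizes to a tangent line of $C$, which still lies in $\operatorname{Sec}(C)$ by closure.) For the tangent--inflectional correspondence your route is genuinely different. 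The paper slices: a general point $y\in L^\vee$ gives a plane $H$ with $L=T_a(C^\vee)\cap H$, the identification $(C^\vee\cap H)^\vee=\pi_y(C)$ converts the inflectional line into a cusp of the projected plane curve, and that cusp is the image of a tangent line of $C$ through $y$ (Fig.~\ref{05fig:proj}). You instead argue intrinsically on the developable $C^\vee$: constancy of the tangent plane along rulings forces every smooth point to be parabolic with the ruling as the unique asymptotic, hence inflectional, direction. Your approach buys the cleaner statement that $\operatorname{Infl}(C^\vee)$ is literally the one-parameter family of rulings (which is exactly how the paper later explains, in Remark~\ref{05rem:infcurve}, why this locus is a curve rather than a congruence), while the paper's approach stays entirely within the projective-duality and plane-curve toolkit it has already built around Lemma~\ref{05lem:plucker} and reuses for the bidegree counts. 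The one step you still owe is the local verification that at a smooth point of $C^\vee$ the second fundamental form has rank exactly one with kernel the ruling direction (it is not identically zero at a general point since $C^\vee$ is not a plane when $C$ is nondegenerate); this is standard for developables, and you correctly flag it as the delicate point rather than quietly assuming it.
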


Congruences and line complexes have been actively studied both in the 19th
century and in modern times. The study of congruences goes back to
Kummer~\cite{05kummer}, who classified those of order $1$; the order of a
congruence is the number of lines in the congruence that pass through a
general point in $\PP^3$.  The Chow hypersurfaces of space curves were
introduced by Cayley~\cite{05cayley} and generalized to arbitrary varieties by
Chow and van der Waerden~\cite{05chowVDW}.  Many results from the second half
of the 19th century are detailed in Jessop's monograph~\cite{05jessop}.
Hurwitz hypersurfaces and further generalizations known as higher associated
or coisotropic hypersurfaces are studied in \cite{05gkz, 05coisotropic,
  05hurwitz}.  Catanese~\cite{05catanese} shows that Chow
hypersurfaces\index{Chow!hypersurface} of space curves and Hurwitz
hypersurfaces of surfaces are exactly the self-dual hypersurfaces in the
Grassmannian $\Gr(1,\PP^3)$.  Ran~\cite{05ran} studies surfaces of order $1$
in general Grassmannians and gives a modern proof of Kummer's classification.
Congruences play a role in algebraic vision and multi-view geometry, where
cameras are modeled as maps from $\PP^3$ to congruences~\cite{05bernd}.  The
multidegree of the image of several of those cameras is computed by Escobar
and Knutson in~\cite{05multidegree}.

In Sect.~\ref{05sec:grass}, we collect basic facts about the Grassmannian
$\Gr(1,\PP^3)$ and its subvarieties. Section~\ref{05sec:bisec} studies the
singular locus of the Chow hypersurface of a space curve and computes its
bidegree.  Section~\ref{05sec:bitang} describes the singular locus of a
Hurwitz hypersurface and Sect.~\ref{05sec:dual} uses projective
duality\index{projective~duality} to calculate the bidegree of its components.
In Sect.~\ref{05sec:intersec}, we connect the intersection theory in
$\Gr(1,\PP^3)$ to Chow and Hurwitz hypersurfaces.  Finally,
Section~\ref{05sec:singLoci} analyzes the singular loci of secant, bitangent,
and inflectional congruences.

This article provides complete solutions to Problem~5 on Curves, Problem~4 on
Surfaces, and Problem~3 on Grassmannians in \cite{05Sturmfels}.

\section{The Degree of a Subvariety in $\Gr(1,\PP^3)$}
\label{05sec:grass}

In this section, we provide the geometric definition for the degree of a
subvariety in $\Gr(1,\PP^3)$.  An alternative approach, using coefficients of
classes in the Chow ring, can be found in Sect.~\ref{05sec:intersec}.  For
information about subvarieties of more general Grassmannians, we
recommend~\cite{05subvarieties}.

The Grassmannian\index{Grassmannian} $\Gr(1,\PP^3)$ of lines in $\PP^3$ is a
$4$-dimensional variety that embeds into $\PP^5$ via the Pl\"ucker
embedding\index{Plucker@Pl\"ucker!embedding}.  In particular, the line in
$3$-space spanned by the distinct points
$(x_0 : x_1 : x_2 : x_3), (y_0 : y_1 : y_2 : y_3) \in \PP^3$ is identified
with the point
$(p_{0,1} : p_{0,2} : p_{0,3} : p_{1,2} : p_{1,3} : p_{2,3}) \in \PP^5$, where
$p_{i,j}$ is the minor formed of $i$th and $j$th columns of the matrix $\left[
  \begin{smallmatrix} 
    x_0 & x_1 & x_2 & x_3 \\ 
    y_0 & y_1 & y_2 & y_3 
  \end{smallmatrix} 
\right]$.  The Pl\"ucker coordinates\index{Plucker@Pl\"ucker!coordinates}
$p_{i,j}$ satisfy the relation
$p_{0,1} p_{2,3} - p_{0,2} p_{1,3} + p_{0,3} p_{1,2} = 0$. Moreover, every
point in $\PP^5$ satisfying this relation is the Pl\"ucker coordinates of some
line. Dually, a line in $\PP^3$ is the intersection of two distinct planes. If
the planes are given by the equations
$a_0 x_0 + a_1 x_1 + a_2 x_2 + a_3 x_3 = 0$ and
$b_0 x_0 + b_1 x_1 + b_2 x_2 + b_3 x_3 = 0$, then the minors $q_{i,j}$ of the
matrix $\left[
  \begin{smallmatrix} 
    a_0 & a_1 & a_2 & a_3 \\ 
    b_0 & b_1 & b_2 & b_3 
  \end{smallmatrix} 
\right]$ are the dual Pl\"ucker coordinates and also satisfy
$q_{0,1} q_{2,3} - q_{0,2} q_{1,3} + q_{0,3} q_{1,2} = 0$.  The map given by
$p_{0,1} \mapsto q_{2,3}$, $p_{0,2} \mapsto -q_{1,3}$,
$p_{0,3} \mapsto q_{1,2}$, $p_{1,2} \mapsto q_{0,3}$,
$p_{1,3} \mapsto -q_{0,2}$, and $p_{2,3} \mapsto q_{0,1}$ allows one to
conveniently pass between these two coordinate systems.

A \emph{line complex} is a threefold $\Sigma \subset \Gr(1,\PP^3)$.  For a
general plane $H \subset \PP^3$ and a general point $v \in H$, the degree of
$\Sigma$ is the number of points in $\Sigma$ corresponding to a line
$L \subset \PP^3$ such that $v \in L \subset H$.  For instance, if
$C \subset \PP^3$ is a curve, then the Chow
hypersurface\index{Chow!hypersurface}
$\operatorname{CH}_0(C) := \{ L \in \Gr(1,\PP^3) : C \cap L \neq \varnothing
\}$ is a line complex.  A general plane $H$ intersects $C$ in $\deg(C)$ many
points, so there are $\deg(C)$ many lines in $H$ that pass through a general
point $v \in H$ and intersect $C$; see Fig.~\ref{05fig:chowDeg}.
\begin{figure}[ht]
  \centering
  \includegraphics[width=10cm]{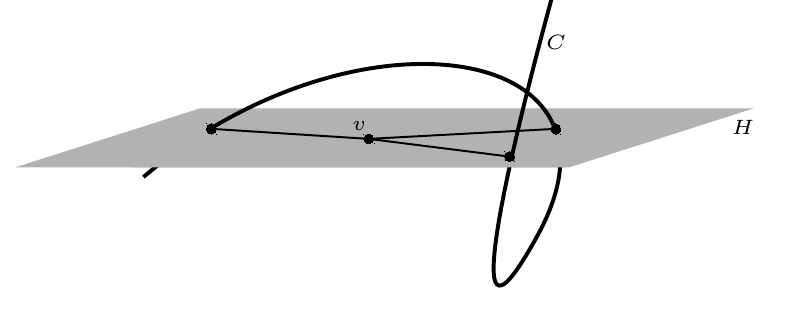}
  \caption{The degree of the Chow hypersurface}
  \label{05fig:chowDeg}
\end{figure}
Hence, the degree of the Chow hypersurface is equal to the degree of the
curve.

A \emph{congruence}\index{congruence} is a surface
$\Sigma \subset \Gr(1,\PP^3)$. For a general point $v \in \PP^3$ and a general
plane $H \subset \PP^3$, the bidegree\index{bidegree} of a congruence is a
pair $(\alpha, \beta)$, where the \emph{order} $\alpha$ is the number of
points in $\Sigma$ corresponding to a line $L \subset \PP^3$ such that
$v \in L$ and the \emph{class} $\beta$ is the number of points in $\Sigma$
corresponding to lines $L \subset \PP^3$ such that $L \subset H$.  For
instance, consider the congruence of all lines passing through a fixed point
$x$.  Given a general point $v$, this congruence contains a unique line
passing through $v$, namely the line spanned by $x$ and $v$.  Given a general
plane $H$, we have $x \not\in H$, so this congruence does not contain any line
that lies in $H$.  Hence, the set of lines passing through a fixed point is a
congruence with bidegree $(1,0)$.  A similar argument shows that the
congruence of lines lying in a fixed plane has bidegree $(0,1)$.

The degree of a curve $\Sigma \subset \Gr(1,\PP^3)$ is the number of points in
$\Sigma$ corresponding to a line $L \subset \PP^3$ that intersects a general
line in $\PP^3$.  Equivalently, it is the number of points in the intersection
of $\Sigma$ with the Chow hypersurface\index{Chow!hypersurface} of a general
line.  For instance, the set of all lines in $\PP^3$ that lie in a fixed plane
$H \subset \PP^3$ and contain a fixed point $v \in H$ forms a curve in
$\Gr(1, \PP^3)$.  This curve has degree $1$, because a general line has a
unique intersection point with $H$ and there is a unique line passing through
this point and $v$.  In other words, this curve is a line in $\Gr(1, \PP^3)$.

Finally, the degree of a zero-dimensional subvariety is simply the number of
points in the variety.

\section{Secants of Space Curves}
\label{05sec:bisec}

This section describes the singular locus of the Chow hypersurface of a space
curve.  For a curve with mild singularities, we also compute the bidegree of
its secant congruence.

A curve $C \subset \PP^3$ is defined by at least two homogeneous polynomials
in the coordinate ring of $\PP^3$, and these polynomials are not uniquely
determined.  However, there is a single equation that encodes the curve
$C$. Specifically, its Chow hypersurface\index{Chow!hypersurface|emph}
$\operatorname{CH}_0(C) := \{ L \in \Gr(1, \PP^3) : C \cap L \neq \varnothing
\}$ is determined by a single polynomial in the Pl\"ucker
coordinates\index{Plucker@Pl\"ucker!coordinates} on $\Gr(1, \PP^3)$.  This
equation, known as the \emph{Chow form}\index{Chow!form} of $C$, is unique up
to rescaling and the Pl\"ucker relation\index{Plucker@Pl\"ucker!relation}.
For more on Chow forms; see~\cite{05chow}.

\begin{example}[{{\normalfont {\cite[Prop.~1.2]{05chow}}}}]
  The twisted cubic is a smooth rational curve of degree $3$ in $\PP^3$.
  Parametrically, this curve is the image of the map
  $\nu_3 \colon \PP^1 \to \PP^3$ defined by
  $(s:t) \mapsto (s^3:s^2t:st^2:t^3)$.  The line $L$, which is determined by
  the two equations $a_0 x_0 + a_1 x_1 + a_2 x_2 + a_3 x_3 = 0$ and
  $b_0 x_0 + b_1 x_1 + b_2 x_2 + b_3 x_3 = 0$, intersects the twisted cubic if
  and only if there exists a point $(s:t) \in \PP^1$ such that
  \[
    a_0 s^3 + a_1 s^2t + a_2st^2 + a_3t^3 = 0 = b_0 s^3 + b_1 s^2t + b_2 st^2+
    b_3 t^3 \, .
  \]
  The resultant\index{resultant} for these two cubic polynomials, which can be
  expressed as a determinant of an appropriate matrix with entries in
  $\ZZ[a_0, a_1, a_2, a_3, b_0, b_1, b_2, b_3]$, vanishes exactly when they
  have a common root.  It follows that the line $L$ meets the twisted cubic if
  and only if
  \[
    0 = \det 
    \begin{bmatrix}
      a_0 & a_1 & a_2 & a_3 & 0 & 0 \\
      0 & a_0 & a_1 & a_2 & a_3 & 0 \\
      0 & 0 & a_0 & a_1 & a_2 & a_3 \\
      b_0 & b_1 & b_2 & b_3 & 0 & 0 \\
      0 & b_0 & b_1 & b_2 & b_3 & 0 \\
      0 & 0 & b_0 & b_1 & b_2 & b_3 
    \end{bmatrix} 
    = - \det
    \begin{bmatrix}
      q_{0,1} & q_{0,2}          & q_{0,3} \\
      q_{0,2} & q_{0,3} + q_{1,2} & q_{1,3} \\
      q_{0,3} & q_{1,3}          & q_{2,3}
    \end{bmatrix} \, ,
  \]
  where $q_{i,j}$ are the dual Pl\"ucker coordinates.  Hence, the Chow
  form\index{Chow!form} of the twisted cubic is
  $q_{0,3}^3 + q_{0,3}^2 q_{1,2}^{} - 2 q_{0,2}^{} q_{0,3}^{} q_{1,3}^{} +
  q_{0,1}^{} q_{1,3}^2 + q_{0,2}^2 q_{2,3}^{} - q_{0,1}^{} q_{0,3}^{}
  q_{2,3}^{} - q_{0,1}^{} q_{1,2}^{} q_{2,3}^{}$.
\end{example}

We next record a technical lemma.  If $I_X$ is the saturated homogeneous ideal
defining the subvariety $X \subset \PP^n$, then the tangent space $T_x(X)$ at
the point $x \in X$ can be identified with
$\bigl\{ y \in \PP^n :
\text{$\textstyle\sum\nolimits_{i=0}^n \tfrac{\partial f}{\partial x_i}(x) y_i
  = 0$ for all $f(x_0,x_1, \dotsc, x_n) \in I_X$} \bigr\}$.

\begin{lemma}
  \label{05prop:key}
  Let $f \colon X \to Y$ be a birational finite surjective morphism between
  irreducible projective varieties and let $y \in Y$.  The variety $Y$ is
  smooth at the point $y$ if and only if the fibre $f^{-1}(y)$ contains
  exactly one point $x \in X$, the variety $X$ is smooth at the point $x$, and
  the differential $d_x f \colon T_x (X) \to T_y(Y)$ is an injection.
\end{lemma}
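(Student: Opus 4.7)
The plan is to pass to the local rings $A=\cO_{Y,y}$ and $B=\cO_{X,x}$, where $x\in f^{-1}(y)$, and reformulate everything via commutative algebra. Birationality of $f$, together with the irreducibility of $X$ and $Y$, will give $A\hookrightarrow B$ inside their common function field, while finiteness will give that the stalk $(f_*\cO_X)_y$ is a finite $A$-module, and when the fibre is a single point it equals $B$.

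For the forward direction, suppose $Y$ is smooth at $y$. Then $A$ is regular, and in particular integrally closed. The finite $A$-algebra $(f_*\cO_X)_y$ is integral over $A$ and sits inside the shared fraction field, so normality of $A$ forces $(f_*\cO_X)_y=A$. Hence $f$ is an isomorphism in a neighbourhood of $y$; in particular the fibre is the single point $x$, $X$ is smooth there, and $d_xf$ is an isomorphism, so a fortiori injective.

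For the backward direction, assume $f^{-1}(y)=\{x\}$, $X$ is smooth at $x$, and $d_xf$ is injective. The fibre hypothesis together with finiteness gives $(f_*\cO_X)_y=B$, so we have a local homomorphism $\varphi\colon A\to B$ of Noetherian local rings with $B$ finite over $A$. I would translate the injectivity of $d_xf$ into the surjectivity of the induced cotangent map $\mathfrak{m}_A/\mathfrak{m}_A^2\to\mathfrak{m}_B/\mathfrak{m}_B^2$, which reads $\mathfrak{m}_B=\varphi(\mathfrak{m}_A)B+\mathfrak{m}_B^2$ inside $B$. Now look at the quotient $\bar B:=B/\varphi(\mathfrak{m}_A)B$: it is a finite-dimensional $k$-algebra whose spectrum is the scheme-theoretic fibre, supported on the single point $x$, so it is local Artinian with residue field $k$ and nilpotent maximal ideal $\bar{\mathfrak{m}}$. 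The displayed relation passes to $\bar{\mathfrak{m}}=\bar{\mathfrak{m}}^2$ in $\bar B$, and nilpotency collapses this to $\bar{\mathfrak{m}}=0$, so $\bar B=k$. Nakayama's lemma, applied to the finite $A$-module $B$, then upgrades this to the surjectivity of $\varphi$. Combined with injectivity of $\varphi$ (birationality of $f$ and integrality of $A$), this yields $\varphi\colon A\xrightarrow{\sim} B$. Since $B$ is regular by the smoothness of $X$ at $x$, $A$ is regular as well, i.e., $Y$ is smooth at $y$.

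The main obstacle is the backward direction: surjectivity of the cotangent map alone is too weak to force $\varphi$ to be surjective. What genuinely does the work is the set-theoretic fibre hypothesis, which ensures that $\bar B$ is local Artinian and therefore has a nilpotent maximal ideal; this is exactly what promotes the identity $\bar{\mathfrak{m}}=\bar{\mathfrak{m}}^2$ from a toothless statement into the conclusion $\bar{\mathfrak{m}}=0$. Everything else is then a standard Nakayama-plus-normality argument.
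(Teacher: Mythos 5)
Your proof is correct, and it takes a genuinely different route from the one in the paper. The paper's argument is assembled from cited black boxes: for the forward direction it invokes the Zariski Connectedness Theorem (normality of a smooth point forces the finite fibre to be connected, hence a singleton) followed by Zariski's Main Theorem and Theorem~14.9 of Harris for injectivity of the differential; for the backward direction it combines Lemma~14.8 and Theorem~14.9 of Harris to produce an isomorphism over an open neighbourhood of $y$. You instead work entirely in the local rings $A=\mathcal{O}_{Y,y}$ and $B=\mathcal{O}_{X,x}$: your forward direction is the standard ``finite + birational onto normal $\Rightarrow$ isomorphism'' argument via integral closure, which yields the singleton fibre and the local isomorphism in one stroke rather than two; your backward direction dualizes $d_xf$ to surjectivity on cotangent spaces, uses the single-point fibre to make $B/\mathfrak{m}_AB$ Artinian local so that $\bar{\mathfrak{m}}=\bar{\mathfrak{m}}^2$ collapses to $\bar{\mathfrak{m}}=0$, and then applies Nakayama to get $A\cong B$. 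The payoff of your version is that it is self-contained and isolates precisely where each hypothesis enters --- in particular, your closing remark that the set-theoretic fibre condition (and not merely injectivity of the differential) is what drives the backward implication is exactly right, as the normalization of a nodal curve shows. The paper's version is shorter given its references, but hides this dependence inside the citations.
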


\begin{proof}
  First, suppose that $Y$ is smooth at the point $y$.  Since $Y$ is normal at
  the point $y$, the Zariski Connectedness
  Theorem~\cite[Sect.~III.9.V]{05mumford}
  \index{Zariski~Connectedness~Theorem} proves that the fibre $f^{-1}(y)$ is a
  connected set in the Zariski topology.  As $f$ is a finite morphism, its
  fibres are finite and we deduce that $f^{-1}(y) = \{x \}$.  If $Y_0$ is the
  open set of smooth points in $Y$ and let $X_0 := f^{-1}(Y_0)$, then
  Zariski's Main Theorem~\cite[Sect.~III.9.I]{05mumford} implies that the
  restriction of $f$ to $X_0$ is an isomorphism of $X_0$ with $Y_0$.  In
  particular, we have that $x \in X_0 \subset X$ is a smooth point.  Moreover,
  Theorem~14.9 in \cite{05harris} shows that the differential $d_xf$ is
  injective.

  For the other direction, suppose that $f^{-1}(y) = \{ x \}$ for some smooth
  point $x \in X$ with injective differential $d_xf$.  Let $Y_1$ be an open
  neighbourhood of $y$ containing points in $Y$ with one-element fibres and
  injective differentials.  Combining Lemma~14.8 and Theorem~14.9 in
  \cite{05harris} produces an isomorphism of $X_1 := f^{-1}(Y_1)$ with $Y_1$.
  Since $x \in X_1$ is smooth, we conclude that $y \in Y_1 \subset Y$ is
  smooth.  \qed
\end{proof}

When the curve $C$ has degree at least two, the set of lines that meet it in
two points forms a surface $\operatorname{Sec}(C) \subset \Gr(1,\PP^3)$ called
the \emph{secant congruence}\index{congruence} of $C$. More precisely,
$\operatorname{Sec}(C)$ is the closure in $\Gr(1,\PP^3)$ of the set of points
corresponding to a line in $\PP^3$ which intersects the curve $C$ at two
smooth points.  A line meeting $C$ at a singular point might not belong to
$\operatorname{Sec}(C)$, even though it has intersection multiplicity at least
two with the curve; see Remark~\ref{05rem:singChow}.

The following theorem is the main result in this section.

\begin{theorem}
  \label{05thm:singChow}
  Let $C \subset \PP^3$ be an irreducible curve\index{curve!space} of degree
  at least $2$.  If $\operatorname{Sing}(C)$ denotes the singular locus of the
  curve $C$, then the singular locus of the Chow
  hypersurface\index{Chow!hypersurface} for $C$ is
  $\operatorname{Sec}(C) \cup \bigl( \bigcup_{x \in \operatorname{Sing}(C)} \{
  L \in \Gr(1,\PP^3) : x \in L \} \bigr)$.
\end{theorem}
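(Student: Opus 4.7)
The plan is to apply Lemma~\ref{05prop:key} to a resolution of $\operatorname{CH}_0(C)$. Let $\nu \colon \tilde{C} \to C$ be the normalization and form the incidence variety
\[
  \Phi := \{ (\tilde{x}, L) \in \tilde{C} \times \Gr(1, \PP^3) : \nu(\tilde{x}) \in L \}.
\]
The first projection makes $\Phi$ into a $\PP^2$-bundle over $\tilde{C}$, with fibre over $\tilde{x}$ the $\PP^2$ of lines in $\PP^3$ through $\nu(\tilde{x})$; hence $\Phi$ is smooth, irreducible, and $3$-dimensional. The second projection $\pi \colon \Phi \to \Gr(1, \PP^3)$ has image exactly $\operatorname{CH}_0(C)$, has finite fibres $\pi^{-1}(L) = \nu^{-1}(L \cap C)$ (the inclusion $L \not\subset C$ follows from irreducibility and $\deg C \geq 2$), and is birational because a general line in $\operatorname{CH}_0(C)$ meets $C$ transversally in a single smooth point. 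Lemma~\ref{05prop:key} then identifies the singular locus of $\operatorname{CH}_0(C)$ with the image under $\pi$ of the locus where either the fibre is not a single point or the differential $d\pi$ is not injective.

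The condition $\#\pi^{-1}(L) > 1$ holds precisely when $\nu^{-1}(L \cap C)$ contains at least two points, which happens either when $L$ meets $C$ at two or more distinct smooth points---placing $L$ in $\operatorname{Sec}(C)$---or when $L$ passes through some $x \in \operatorname{Sing}(C)$, since the multiple branches at $x$ already contribute more than one preimage under $\nu$; this second case places $L$ in $\bigcup_{x \in \operatorname{Sing}(C)} \{ L' : x \in L' \}$. For the differential, take $(\tilde{x}, L) \in \Phi$ with unique preimage. Any tangent vector killed by $d\pi$ is of the form $(\dot{t}, 0)$ with $\dot{t} \in T_{\tilde{x}}\tilde{C}$; writing $L$ as the vanishing of two linear forms with dual Pl\"ucker coefficients $a$ and $b$ and differentiating the incidence conditions $a \cdot \nu(t) = b \cdot \nu(t) = 0$ at $\tilde{x}$ shows that a nonzero such $\dot{t}$ exists exactly when $d\nu(T_{\tilde{x}}\tilde{C}) \subset L$. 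When $\nu(\tilde{x})$ is a smooth point of $C$, this asserts that $L$ is the tangent line to $C$ at $\nu(\tilde{x})$, which is a limit of proper secants and therefore lies in $\operatorname{Sec}(C)$. When $\nu(\tilde{x})$ is a unibranch (cuspidal) singularity, $d\nu$ vanishes at $\tilde{x}$ and the condition is automatic, placing $L$ in $\bigcup_{x \in \operatorname{Sing}(C)} \{ L' : x \in L' \}$.

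The main technical obstacle is the differential calculation just outlined: one must verify that the kernel of $d\pi$ is governed by the geometric condition $d\nu(T_{\tilde{x}}\tilde{C}) \subset L$, and that tangent lines at smooth points of $C$ belong to $\operatorname{Sec}(C)$ by continuity of secants. Once this is in place, combining the two sources of singularity yields exactly the set $\operatorname{Sec}(C) \cup \bigcup_{x \in \operatorname{Sing}(C)} \{ L \in \Gr(1,\PP^3) : x \in L \}$ claimed in the theorem; finiteness, surjectivity, and birationality of $\pi$ are routine.
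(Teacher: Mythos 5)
Your proof is correct, and it uses the same key tool as the paper (Lemma~\ref{05prop:key} applied to a finite birational projection from an incidence variety), but with a genuinely different choice of incidence variety. The paper works with $\Phi_C = \{(v,L) : v \in L\} \subset C \times \Gr(1,\PP^3)$ and must therefore carry out an explicit Jacobian computation in an affine chart to show that $\Phi_C$ is singular exactly over $\operatorname{Sing}(C)$; the lines through singular points of $C$ then enter the singular locus of $\operatorname{CH}_0(C)$ via the ``$X$ smooth at $x$'' clause of the lemma. You instead pull back to the normalization, so your $\Phi$ is a $\PP^2$-bundle over a smooth curve and smoothness is free; the price is that lines through $\operatorname{Sing}(C)$ must be detected through the branch structure --- multibranch points force $\#\pi^{-1}(L) \geq 2$, while unibranch singular points force $d\nu = 0$ and hence a kernel for $d\pi$. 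You do handle both cases, but note that your sentence claiming $\#\pi^{-1}(L) > 1$ whenever $L$ passes through a singular point is literally false for a unibranch (cuspidal) singularity met in no other point; as written it reads like a biconditional, and the statement is only rescued two sentences later when the unibranch case reappears in the differential analysis. Your reduction of $\ker d\pi$ to the condition $d\nu(T_{\tilde{x}}\tilde{C}) \subset L$ is the coordinate-free form of the paper's kernel computation, and both arguments share the same final step, namely that tangent lines at smooth points lie in $\operatorname{Sec}(C)$ as limits of honest secants. On balance your route trades the paper's explicit chart computation for a cleaner global construction, at the cost of a case split on the branch type of the singularities.
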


\begin{proof}
  We first show that the incidence variety
  $\Phi_C := \{ (v, L) : v \in L \} \subset C \times \Gr(1,\PP^3)$ is smooth
  at the point $(v, L)$ if and only if the curve $C$ is smooth at the point
  $v \in C$.  Let $f_1, f_2, \dotsc, f_k \in \CC[x_0,x_1,x_2,x_3]$ be
  generators for the saturated homogeneous ideal of $C$ in $\PP^3$. Consider
  the affine chart of $\PP^3 \times \Gr(1, \PP^3)$ where $x_0 \neq 0$ and
  $p_{0,1} \neq 0$. We may assume that $v = (1 : \alpha : \beta : \gamma)$ and
  the line $L$ is spanned by the points $(1 : 0 : a : b)$ and
  $(0 : 1 : c : d)$.  We have that $v \in L$ if and only if the line $L$ is
  given by the row space of matrix
  \[
    \begin{bmatrix}
      1 & \alpha & \beta & \gamma \\ 0 & 1 & c & d
    \end{bmatrix}
    = 
    \begin{bmatrix}
      1 & \alpha  \\ 0 & 1 
    \end{bmatrix}
    \begin{bmatrix}
      1 & 0 & \beta-\alpha c & \gamma-\alpha d \\ 0 & 1 & c & d
    \end{bmatrix} \, ,
  \]
  which is equivalent to $a = \beta - \alpha c$ and $b = \gamma - \alpha d$.
  Hence, in the chosen affine chart, $\Phi_C$ can be written as
  \[
    \bigl\{ (\alpha, \beta, \gamma, a,b,c,d) :
    \text{$f_i(1,\alpha, \beta, \gamma) = 0$ for $1 \leq i \leq k$,
      $a = \beta - \alpha c$, $b = \gamma - \alpha d$} \bigr\} \, .
  \]
  As $\dim \Phi_C = 3$, it is smooth at the point $(v, L)$ if and only if its
  tangent space has dimension three or, equivalently, the Jacobian matrix
  \[
    \begin{bmatrix}
      \frac{\partial f_1}{\partial x_1}(1, \alpha, \beta, \gamma) &
      \frac{\partial f_1}{\partial x_2}(1, \alpha, \beta, \gamma) &
      \frac{\partial f_1}{\partial x_3}(1, \alpha, \beta, \gamma) & 0 & 0 &
      0 & 0 \\
      \frac{\partial f_2}{\partial x_1}(1, \alpha, \beta, \gamma) &
      \frac{\partial f_2}{\partial x_2}(1, \alpha, \beta, \gamma) &
      \frac{\partial f_2}{\partial x_3}(1, \alpha, \beta, \gamma) & 0 & 0 &
      0 & 0 \\
      \vdots & \vdots & \vdots & \vdots & \vdots & \vdots & \vdots \\
      \frac{\partial f_k}{\partial x_1}(1, \alpha, \beta, \gamma) &
      \frac{\partial f_k}{\partial x_2}(1, \alpha, \beta, \gamma) &
      \frac{\partial f_k}{\partial x_3}(1, \alpha, \beta, \gamma) & 0 & 0 &
      0 & 0 \\
      -c & 1 & 0 & -1 & 0 & - \alpha & 0 \\
      -d & 0 & 1 & 0 & -1 & 0 & - \alpha \\
    \end{bmatrix} 
  \]
  has rank four.  We see that this Jacobian matrix has rank four if and only
  if the Jacobian matrix of $C$ has rank two, in which case $v \in C$ is smooth.
  Therefore, we deduce that $\Phi_C$ is smooth at the point $(v,L)$ exactly
  when $C$ is smooth at the point $v$.

  By Lemma~14.8 in \cite{05harris}, the projection
  $\pi \colon \Phi_C \rightarrow \operatorname{CH}_0(C)$ defined by
  $(v, L) \mapsto L$ is finite; otherwise $C$ would contain a line
  contradicting our assumptions. Moreover, the general fibre of $\pi$ has
  cardinality $1$ because the general line $L \in \operatorname{CH}_0(C)$
  intersects $C$ in a single point. Hence, $\pi$ is birational.  Applying
  Lemma~\ref{05prop:key} shows that $\operatorname{CH}_0(C)$ is smooth
  at $L$ if and only if $\pi^{-1}(L) = \{ (v,L) \}$ where $v \in C$ is a
  smooth point and the differential $d_{(v,L)} \pi$ is injective.  Using our
  chosen affine chart, we see that the differential $d_{(v,L)} \pi$ sends
  every element in the kernel of the Jacobian matrix to its last four
  coordinates.  This map is not injective if and only if the kernel contains
  an element of the form $\begin{bmatrix} 
    \ast & \ast & \ast & 0 & 0 & 0 & 0
  \end{bmatrix}^{\transpose} \neq 0$.  Such an element belongs to the kernel
  if and only if it is equal to $\begin{bmatrix}
    \lambda & c \lambda & d \lambda & 0 & 0 & 0 & 0 
  \end{bmatrix}^{\transpose}$ for some $\lambda \in \CC \setminus \{ 0 \}$ and
  \[
    \frac{\partial f_i}{\partial x_1}(1,\alpha, \beta, \gamma) +
    c\frac{\partial f_i}{\partial x_2}(1,\alpha, \beta, \gamma) +
    d\frac{\partial f_i}{\partial x_3}(1,\alpha, \beta, \gamma) = 0
  \]
  for all $1 \leq i \leq k$.  Hence, for a smooth point $v \in C$, the
  differential $d_{(v,L)} \pi$ is not injective if and only if $L$ is the
  tangent line of $C$ at $v$.  Since we have that $|\pi^{-1}(L)| = 1$ if and
  only if $L$ is not a secant line and all tangent lines to $C$ are contained
  in $\operatorname{Sec}(C)$, we conclude that $\operatorname{CH}_0(C)$ is
  smooth at $L$ if and only if $L \notin \operatorname{Sec}(C)$ and $L$ meets
  $C$ at a smooth point.  \qed
\end{proof}

\begin{remark}
  \label{05rem:singChow}
  Local computations show that the secant congruence of $C$ generally does not
  contain all lines through singular points of $C$.  To be more explicit, let
  $x \in C$ be an \emph{ordinary singularity}; the point $x$ is the
  intersection of $r$ branches of $C$ with $r \geq 2$, and the $r$ tangent
  lines of the branches at $x$ are pairwise different.  We claim that a line
  $L$ intersecting $C$ only at the point $x$ is contained in
  $\operatorname{Sec}(C)$ if and only if $L$ lies in a plane spanned by two of
  the $r$ tangent lines at $x$.  The union of all those lines forms the
  tangent star of $C$ at $x$; see \cite{05star1, 05star2}.

  Suppose that $x = (1:0:0:0)$ and $L \in \operatorname{Sec}(C)$ intersects
  the curve $C$ only at the point $x$.  The line $L$ must be the limit of a
  family of lines $L_t$ that intersect $C$ at two distinct smooth points.
  Without loss of generality, the line $L$ is not one of the tangent lines of
  the curve $C$ at the point $x$ and each line $L_t$ intersects at least two
  distinct branches of $C$.  Since there are only finitely many branches, we
  can also assume that each line $L_t$ in the family intersects the same two
  branches of the curve $C$.  These two branches are parametrized by
  $\bigl( 1 : f_1(s) : f_2(s) : f_3(s) \bigr)$ and
  $\bigl( 1 : g_1(s) : g_2(s) : g_3(s) \bigr)$ with $f_i(0) = 0 = g_j(0)$ for
  $1 \leq i, j \leq 3$.  It follows that tangent lines to these branches are
  spanned by $x$ and $\bigl( 1 : f_1'(0) : f_2'(0) : f_3'(0) \bigr)$ or
  $\bigl( 1 : g_1'(0) : g_2'(0) : g_3'(0) \bigr)$.  Parametrizing intersection
  points, we see that the line $L_t$ intersects the first branch at
  $\bigl(1 : f_1\bigl( \varphi(t) \bigr) : f_2\bigl( \varphi(t) \bigr) :
  f_3\bigl( \varphi(t) \bigr) \bigr)$ and the second branch at
  $\bigl( 1 : g_1 \bigl( \psi(t) \bigr) : g_2 \bigl( \psi(t) \bigr) : g_3
  \bigl( \psi(t) \bigr) \bigr)$ where $\varphi(0) = 0 = \psi(0)$.  Hence, the
  Pl\"ucker coordinates\index{Plucker@Pl\"ucker!coordinates} for $L_t$ are
  \[
    \left( \tfrac{g_1 (\psi(t)) - f_1(\varphi(t))}{t} :
      \tfrac{g_2(\psi(t)) - f_2(\varphi(t))}{t} : \dotsb :
      \tfrac{f_2(\varphi(t)) g_3(\psi(t)) - f_3(\varphi(t)) g_2(\psi(t))}{t}
    \right) \, .
  \]
  Taking the limit as $t \to 0$, we obtain the line $L$ with Pl\"ucker
  coordinates
  \[
    \bigl( g_1'(0)\psi'(0) - f_1'(0)\varphi'(0) : g_2'(0)\psi'(0) -
    f_2'(0)\varphi'(0) : \dotsb : 0 \bigr) \, .
  \]
  This line is spanned by the point $x$ and
  \[
    \bigl( 1 : g_1'(0) \psi'(0) \!-\! f_1'(0) \varphi'(0) : g_2'(0) \psi'(0)
    \!-\!  f_2'(0) \varphi'(0) : g_3'(0) \psi'(0) \!-\! f_3'(0) \varphi'(0)
    \bigr) ,
  \]
  so it lies in the plane spanned by the two tangent lines.  From this
  computation, we also see that all lines passing through $x$ and lying in the
  plane spanned by the tangent lines can be approximated by lines that
  intersect both of the branches at points different from $x$. For this, one
  need only choose $\varphi(t) = \lambda t$ and $\psi(t) = \mu t$ for all
  possible $\lambda, \mu \in \CC \setminus \{ 0 \}$.
\end{remark}

Using Chern classes\index{Chern!class}, Proposition~2.1 in \cite{05arrondo}
calculates the bidegree\index{bidegree} of the secant congruence of a smooth
curve.  We give a geometric description of this bidegree and extend it to
curves with ordinary singularities.

\begin{theorem}
  \label{05thm:bidegSecant}
  If $C \subset \PP^3$ is a nondegenerate irreducible curve\index{curve!space}
  of degree $d$ and genus $g$ having only ordinary singularities
  $x_1, x_2, \dotsc, x_s$ with multiplicities $r_1, r_2, \dotsc, r_s$, then
  the bidegree of the secant congruence $\operatorname{Sec}(C)$ is
  \[
  \Biggl( \binom{d-1}{2} - g - \sum\limits_{i=1}^s \binom{r_i}{2},
  \binom{d}{2} \Biggr) \, .
\]
\end{theorem}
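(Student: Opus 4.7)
The plan is to compute the two components of the bidegree separately by interpreting them as counts in $\PP^3$, exploiting the geometric definition of order and class from Sect.~\ref{05sec:grass}.

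\textbf{Class.} For a general plane $H \subset \PP^3$, Bertini's theorem guarantees that $H \cap C$ consists of $d$ distinct smooth points of $C$. A line $L \subset H$ lies in $\operatorname{Sec}(C)$ if and only if it is the line spanned by two of these $d$ points. Since no three of them are collinear for generic $H$ (as $C$ is nondegenerate, a generic plane is transverse and the intersection is in ``general position''), there are exactly $\binom{d}{2}$ such secants, which gives the claimed class.

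\textbf{Order.} To compute the order, I would project $C$ from a general point $v \in \PP^3$ onto a plane, giving a morphism $\pi_v \colon C \to \PP^2$ whose image $C' := \pi_v(C)$ is a plane curve of degree $d$. For a general $v$, the following hold: $v$ lies on no tangent line of $C$ and on no line meeting $C$ with total multiplicity at least $3$; moreover, the projection maps each ordinary singularity $x_i$ of $C$ to an ordinary singularity of $C'$ of the same multiplicity $r_i$ (since the $r_i$ tangent directions at $x_i$, spanning a codimension-$2$ cone in $\PP^3$, project to $r_i$ distinct tangent directions at $\pi_v(x_i)$). The remaining singularities of $C'$ are precisely the nodes produced by the secant lines of $C$ through $v$, and there is one such node for each such secant. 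Thus the order $\alpha$ we seek equals the number of nodes of $C'$ not coming from the singular points of $C$.

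\textbf{Genus count.} The arithmetic genus of $C'$ is $\binom{d-1}{2}$, and its geometric genus equals that of $C$, namely $g$, since $\pi_v$ restricted to the normalization is birational. The standard genus-versus-singularity formula for a plane curve with only ordinary singularities of multiplicities $m_j$ gives
\[
\binom{d-1}{2} - g = \sum_{j} \binom{m_j}{2}.
\]
Separating the contributions from the images of $x_1, \dots, x_s$ from those of the $\alpha$ nodes produced by secants through $v$ yields
\[
\binom{d-1}{2} - g = \sum_{i=1}^s \binom{r_i}{2} + \alpha,
\]
and solving for $\alpha$ gives the desired first component of the bidegree.

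\textbf{Main obstacle.} The genuine work lies in justifying that a general $v$ behaves as required: (i) no tangent line of $C$ passes through $v$ (true since the tangent developable of a nondegenerate curve is a proper subvariety of $\PP^3$); (ii) no trisecant passes through $v$ (true since the trisecant locus, a proper subvariety of $\operatorname{Sec}(C)$, sweeps out at most a surface in $\PP^3$); and (iii) the singularities $x_i$ project to ordinary singularities of multiplicity $r_i$, which requires $v$ to avoid the finite union of planes spanned by pairs of tangent directions at each $x_i$ as well as the $r_i$ tangent lines at $x_i$. Once these open conditions on $v$ are verified, the node-counting argument via the plane-curve genus formula finishes the proof. \qed
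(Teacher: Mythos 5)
Your argument is correct and follows essentially the same route as the paper's proof: the class is counted via the $\binom{d}{2}$ pairs of points in a general plane section, and the order is obtained by projecting from a general point and applying the genus--degree formula to the resulting plane curve, with secants through the center contributing the extra nodes. Your additional care about the genericity conditions on $v$ (avoiding tangent lines, trisecants, and bad projections of the $x_i$) fills in details the paper leaves implicit, but the underlying argument is the same.
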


\begin{proof}
  Let $H \subset \PP^3$ be a general plane. The intersection of $H$ with $C$
  consists of $d$ points. Any two of these points define a secant line lying
  in $H$; see Fig.~\ref{05fig:secClass}.
  \begin{figure}[ht]
    \centering
    \includegraphics[width=10cm]{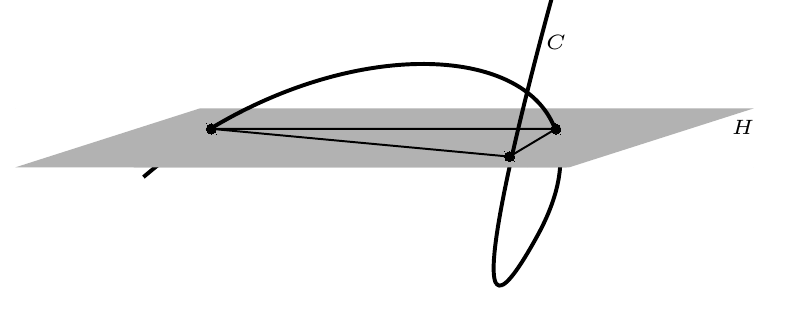}
    \caption{The class of the secant congruence}
    \label{05fig:secClass}
  \end{figure}
  Hence, there are $\binom{d}{2}$ secant lines contained in $H$, which gives
  the class of $\operatorname{Sec}(C)$.
 
  To compute the order of $\operatorname{Sec}(C)$, let $v \in \PP^3$ be a
  general point. Projecting away from $v$ defines a rational map
  $\pi_v \colon \PP^3 \dashrightarrow \PP^2$.  Set $C' := \pi_v(C)$. The map
  $\pi_v$ sends a line passing through $v$ and intersecting $C$ at two points
  to a simple node of the plane curve $C'$; see Fig.~\ref{05fig:proj}.
  Moreover, every ordinary singularity of $C$ is sent to an ordinary
  singularity of $C'$ with the same multiplicity, and the plane curve $C'$ has
  the same degree as the space curve $C$.  As the geometric genus is invariant
  under birational transformation, it also has the same genus; see
  \cite[Theorem~II.8.19]{05hartshorne}.  Thus, the genus-degree formula for
  plane curves~\cite[p.~54, Eq.~(7)]{05genusDegree} shows that the genus of
  $C$ is equal to $\binom{d-1}{2} - \sum_{i=1}^s \binom{r_i}{2}$ minus the
  number of secants of $C$ passing through $v$.  \qed
\end{proof}

\begin{remark}
  \label{05rem:bidegSecant}
  If $C \subset \PP^3$ is a curve of degree at least $2$ that is contained in
  a plane, then its secant congruence consists of all lines in that plane and
  has bidegree $(0,1)$.
\end{remark}

Problem~5 on Curves in~\cite{05Sturmfels} asks to compute the dimension and
bidegree of $\operatorname{Sing}(\operatorname{CH}_0(C))$.  When $C$ is not a
line, Theorem~\ref{05thm:singChow} establishes that
$\operatorname{Sing}(\operatorname{CH}_0(C))$ is $2$-dimensional. For
completeness, we also state its bidegree explicitly.

\begin{corollary}
  \label{05cor:curvebidegree}
  If $C \subset \PP^3$ is an irreducible curve of degree $d \geq 2$ and
  geometric genus $g$ having only ordinary singularities
  $x_1, x_2, \dotsc, x_s$ with multiplicities $r_1, r_2, \dotsc, r_s$, then
  the bidegree of $\operatorname{Sing} \bigl( \operatorname{CH}_0(C) \bigr)$
  equals
  $\Bigl( \tbinom{d-1}{2} - g - \textstyle\sum\limits_{i=1}^s \tbinom{r_i}{2}
  + s, \tbinom{d}{2} \Bigr)$ if $C$ is nondegenerate, and $(s,1)$ if $C$ is
  contained in a plane.
\end{corollary}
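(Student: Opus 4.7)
The plan is to combine the set-theoretic description of the singular locus from Theorem~\ref{05thm:singChow} with the bidegree of the secant congruence from Theorem~\ref{05thm:bidegSecant} (and Remark~\ref{05rem:bidegSecant} in the planar case), and then to verify that the bidegrees add across the irreducible components of the union.

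First I would invoke Theorem~\ref{05thm:singChow} to decompose
\[
  \operatorname{Sing}\bigl(\operatorname{CH}_0(C)\bigr) \,=\, \operatorname{Sec}(C) \,\cup\, \bigcup_{i=1}^{s} \Sigma_{x_i}, \qquad \Sigma_x := \{L \in \Gr(1,\PP^3) : x \in L\},
\]
as a union of $s+1$ two-dimensional subvarieties of the Grassmannian. Each star $\Sigma_{x_i}$ has bidegree $(1,0)$, as computed in Sect.~\ref{05sec:grass}. The bidegree of $\operatorname{Sec}(C)$ is $\bigl(\tbinom{d-1}{2} - g - \sum_i \tbinom{r_i}{2},\, \tbinom{d}{2}\bigr)$ when $C$ is nondegenerate by Theorem~\ref{05thm:bidegSecant}, and $(0,1)$ when $C$ is planar by Remark~\ref{05rem:bidegSecant}. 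Summing the bidegrees of the $s+1$ components termwise would then yield the two stated formulas.

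The only step requiring genuine justification is the additivity of the bidegree across these components. Since the components are distinct surfaces, their pairwise intersections are at most curves, and so I would argue that, for suitably generic $v \in \PP^3$ and $H \subset \PP^3$, no line gets counted more than once. Concretely, for generic $v$ the only line through $v$ lying in $\Sigma_{x_i}$ is $\overline{v x_i}$, and these $s$ lines are pairwise distinct; moreover, by the proof of Theorem~\ref{05thm:bidegSecant} the secants of $C$ through $v$ correspond to nodes of a generic plane projection of $C$, so they join two smooth points of $C$ and avoid the $x_i$. For generic $H$ the points $x_i$ lie outside $H$, so $\Sigma_{x_i}$ contributes nothing to the class. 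The orders and classes of the components therefore add, and the corollary follows.
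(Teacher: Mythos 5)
Your proposal is correct and follows essentially the same route as the paper: decompose $\operatorname{Sing}(\operatorname{CH}_0(C))$ via Theorem~\ref{05thm:singChow} into $\operatorname{Sec}(C)$ and the $s$ stars of bidegree $(1,0)$, then add bidegrees using Theorem~\ref{05thm:bidegSecant} and Remark~\ref{05rem:bidegSecant}. The paper leaves the additivity of bidegrees across distinct irreducible components implicit, whereas you justify it explicitly; that is a harmless refinement, not a different argument.
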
 

\begin{proof}
  The bidegree\index{bidegree} of each congruence\index{congruence}
  $\{ L \in \Gr(1,\PP^3) : x_i \in L \}$ is $(1,0)$.  Hence, combining
  Theorem~\ref{05thm:singChow}, Theorem~\ref{05thm:bidegSecant}, and
  Remark~\ref{05rem:bidegSecant} proves the corollary.  \qed
\end{proof}

\section{Bitangents and Inflections of a Surface}
\label{05sec:bitang}

This section describes the singular locus of the Hurwitz hypersurface of a
surface in $\PP^3$.  For a surface $S \subset \PP^3$ that is not a plane, the
Hurwitz hypersurface\index{Hurwitz~hypersurface|emph} $\operatorname{CH}_1(S)$ is
the Zariski closure of the set of all lines in $\PP^3$ that are tangent to $S$
at a smooth point.  Its defining equation in Pl\"ucker
coordinates\index{Plucker@Pl\"ucker!coordinates} is known as the Hurwitz form
of $S$; see \cite{05hurwitz}.

In analogy with the secant congruence of a curve, we associate two congruences
to a surface $S \subset \PP^3$.  Specifically, the Zariski closure in
$\Gr(1,\PP^3)$ of the set of lines tangent to a surface $S$ at two smooth
points forms the \emph{bitangent congruence};
\[
  \operatorname{Bit}(S) := \overline{ \left\{ L \in \Gr(1,\PP^3) :
    \text{
      \begin{tabular}{p{6.8cm}}
        \small
        $x,y \in L \subset T_x(S) \cap T_y(S)$ for distinct smooth points
        $x, y \in S$
      \end{tabular}
    } \right\} } \, .
\]
The \emph{inflectional locus} associated to $S$ is the Zariski closure in
$\Gr(1,\PP^3)$ of the set of lines that intersect the surface $S$ at a smooth
point with multiplicity at least $3$;
\[
  \operatorname{Infl}(S) := \overline{ \left\{ L \in \Gr(1,\PP^3) : \text{
      \begin{tabular}{p{6.8cm}}
        \small
        $L$ intersects $S$ at a smooth point with multiplicity at
        least $3$
      \end{tabular}
    } \right\} } \, .
\]
A general surface of degree $d$ in $\PP^3$ is a surface defined by a
polynomial corresponding to a general point in
$\PP(\CC[x_0, x_1, x_2, x_3]_d)$.  For a general surface, the inflectional
locus is a congruence\index{congruence}.  However, this is not always the
case, as Remark~\ref{05rem:infcurve} demonstrates.

In parallel with Sect.~\ref{05sec:bisec}, the main result in this section
describes the singular locus of the Hurwitz
hypersurface\index{Hurwitz~hypersurface} of $S$.

\begin{theorem}
  \label{05thm:singHurwitz}
  If $S \subset \PP^3$ is an irreducible smooth surface\index{curve!space} of
  degree at least $4$ which does not contain any lines, then we have
  $\operatorname{Sing} \bigl( \operatorname{CH}_1(S) \bigr) =
  \operatorname{Bit}(S) \cup \operatorname{Infl}(S)$.
\end{theorem}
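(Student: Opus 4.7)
The plan is to adapt the strategy of Theorem~\ref{05thm:singChow} to the surface setting. First I would introduce the incidence variety
\[
  \Phi_S := \bigl\{ (v, L) \in S \times \Gr(1, \PP^3) : v \in L \subset T_v(S) \bigr\} \, ,
\]
which records each tangent line together with a chosen tangency point. Projecting $\Phi_S \to S$ realizes $\Phi_S$ as the projectivization $\PP(T_S)$ of the tangent bundle of the smooth surface $S$; since $S$ is smooth of dimension~$2$, this exhibits $\Phi_S$ as a $\PP^1$-bundle, so $\Phi_S$ is smooth and three-dimensional. Alternatively, a local Jacobian computation, imitating the one carried out for $\Phi_C$ in the proof of Theorem~\ref{05thm:singChow}, directly verifies smoothness.

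Next I would analyze the second projection $\pi \colon \Phi_S \to \operatorname{CH}_1(S)$ defined by $(v, L) \mapsto L$. Because $S$ contains no lines, every fibre $\pi^{-1}(L)$ is contained in the finite set $L \cap S$, so $\pi$ is finite. Under the assumption $d \geq 4$, the general tangent line has exactly one tangency point (the locus of bitangents will turn out to be a proper $2$-dimensional subvariety of the $3$-fold $\operatorname{CH}_1(S)$), so $\pi$ is birational. I can then apply Lemma~\ref{05prop:key}: the hypersurface $\operatorname{CH}_1(S)$ is smooth at $L$ if and only if $\pi^{-1}(L)$ consists of a single point $(v,L)$ \emph{and} the differential $d_{(v,L)} \pi$ is injective. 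The first failure mode occurs exactly when $L$ is tangent to $S$ at two or more points, which places $L$ in $\operatorname{Bit}(S)$; closedness of $\operatorname{Bit}(S)$ and of the singular locus accommodates limiting lines.

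The main obstacle is a careful computation of $\ker d_{(v,L)}\pi$, so that the inflectional condition emerges. A kernel element corresponds to a first-order deformation $v_\epsilon = v + \epsilon \dot v$ with $L$ held fixed and $L$ still tangent to $S$ at $v_\epsilon$ to first order. Choosing local coordinates so $v$ is the origin, $S = \{f = 0\}$, and $L$ is spanned by a direction vector $u \in T_v(S)$, the condition $v_\epsilon \in S$ gives $\dot v \in T_v(S)$ and $v_\epsilon \in L$ gives $\dot v$ parallel to $u$. Expanding the tangency condition $\nabla f(v_\epsilon) \cdot (w - v_\epsilon) = 0$ for $w \in L$ to first order in $\epsilon$, and using both $\nabla f(v) \cdot (w-v) = 0$ and $\nabla f(v) \cdot \dot v = 0$, leaves the single scalar equation $u^{\transpose} H_f(v) \dot v = 0$, where $H_f$ is the Hessian of $f$. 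Combined with $\dot v \parallel u$, this is exactly $u^{\transpose} H_f(v) u = 0$, i.e. the vanishing of the second fundamental form of $S$ along $L$; equivalently, $L$ osculates $S$ to order at least three at $v$, so $L \in \operatorname{Infl}(S)$.

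Assembling the two analyses via Lemma~\ref{05prop:key} yields $\operatorname{Sing}(\operatorname{CH}_1(S)) = \operatorname{Bit}(S) \cup \operatorname{Infl}(S)$, after noting once more that both sides are closed. The geometric Hessian calculation identifying $\ker d\pi$ with the inflectional direction is the main technical content; everything else reuses the incidence-variety and finite-birational-projection machinery already developed for space curves.
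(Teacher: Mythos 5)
Your proposal is correct and follows essentially the same route as the paper: the incidence variety $\Phi_S$, its smoothness, the finite birational projection to $\operatorname{CH}_1(S)$, and Lemma~\ref{05prop:key} splitting the singular locus into the locus where the fibre is not a singleton (bitangents) and the locus where the differential drops rank (inflectional tangents). Your intrinsic Hessian computation $u^{\transpose} H_f(v)\,u = 0$ is exactly the coordinate-free form of the paper's condition $g_2(\alpha,\beta,\gamma)=0$ obtained from the explicit Jacobian in the affine chart, and the $\PP^1$-bundle observation is a clean substitute for the paper's rank computation establishing smoothness of $\Phi_S$.
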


\begin{proof}
  We first show that the incidence variety
  \[
    \Phi_S := \{ (v, L) : v \in L \subset T_v(S) \} \subset S \times
    \Gr(1,\PP^3)
  \]
  is smooth.  Let $f \in \CC[x_0,x_1,x_2,x_3]$ be the defining equation for
  $S$ in $\PP^3$.  Consider the affine chart in $\PP^3 \times \Gr(1,\PP^3)$
  where $x_0 \neq 0$ and $p_{0,1} \neq 0$.  We may assume that
  $v = (1 : \alpha : \beta : \gamma)$ and the line $L$ is spanned by the
  points $(1 : 0 : a : b)$ and $(0 : 1 : c : d)$.  In this affine chart, $S$
  is defined by $g_0(x_1, x_2, x_3) := f(1, x_1, x_2, x_3)$.  As in the proof
  of Theorem~\ref{05thm:singChow}, we have that $v \in L$ if and only if
  $a = \beta - \alpha c$ and $b = \gamma - \alpha d$.  For such a pair
  $(v,L)$, we also have that $L \subset T_v(S)$ if and only if
  $(0:1:c:d) \in T_v(S)$.  Setting
  $g_1 := \frac{\partial g_0}{\partial x_1} + c \frac{\partial g_0}{\partial
    x_2} + d \frac{\partial g_0}{\partial x_3}$, we have $L \subset T_v(S)$ if
  and only if $g_1(\alpha, \beta, \gamma) = 0$.  Hence, in the chosen affine
  chart, $\Phi_S$ can be written as
  \[
    \bigl\{ (\alpha, \beta, \gamma, a,b,c,d) :
    \text{$g_j(\alpha, \beta, \gamma) = 0$ for $0 \leq j \leq 1$,
      $a = \beta -\alpha c$, $b = \gamma - \alpha d$ } \bigr\} \, .
  \]
  As $\dim \Phi_S = 3$, it is smooth at the point $(v,L)$ if and only if its
  tangent space has dimension three or, equivalently, its Jacobian matrix
  \[
    \begin{bmatrix}
      \frac{\partial g_0}{\partial x_1}(\alpha, \beta, \gamma) &
      \frac{\partial g_0}{\partial x_2}(\alpha, \beta, \gamma) &
      \frac{\partial g_0}{\partial x_3}(\alpha, \beta, \gamma) & 0 & 0 & 0 & 0
      \\
      \frac{\partial g_1}{\partial x_1}(\alpha, \beta, \gamma) &
      \frac{\partial g_1}{\partial x_2}(\alpha, \beta, \gamma) &
      \frac{\partial g_1}{\partial x_3}(\alpha, \beta, \gamma) & 0 & 0 &
      \frac{\partial g_0}{\partial x_2}(\alpha, \beta, \gamma) &
      \frac{\partial g_0}{\partial x_3}(\alpha, \beta, \gamma)
      \\
      -c & 1 & 0 & -1 & 0 & - \alpha & 0 \\
      -d & 0 & 1 & 0 & -1 & 0 & - \alpha
    \end{bmatrix} 
  \]
  has rank four.  Since $S$ is smooth, we deduce that this Jacobian matrix has
  rank four, so $\Phi_S$ is also smooth.

  Since $S$ does not contain any lines, all fibres of the projection
  $\pi \colon \Phi_S \rightarrow \operatorname{CH}_1(S)$ defined by
  $(v,L) \mapsto L$ are finite, so Lemma~14.8 in \cite{05harris} implies that
  $\pi$ is finite.  Moreover, the general fibre of $\pi$ has cardinality $1$,
  so $\pi$ is birational.  Applying Lemma~\ref{05prop:key} shows that
  $\operatorname{CH}_1(S)$ is smooth at the point $(v,L)$ if and only if the
  fibre $\pi^{-1}(L)$ consists of one point $(v,L)$ and the differential
  $d_{(x,L)}\pi$ is injective.  In particular, we have $|\pi^{-1}(L)| = 1$ if
  and only if $L$ is not a bitangent.  It remains to show that the
  differential $d_{(v,L)}\pi$ is injective if and only if $L$ is a simple
  tangent of $S$ at $v$.  Using our chosen affine chart, we see that the
  differential $d_{(v,L)}\pi$ projects every element in the kernel of the
  Jacobian matrix on its last four coordinates.  This map is not injective if
  and only if the kernel contains an element of the form
  $\begin{bmatrix} \ast & \ast & \ast & 0 & 0 & 0 & 0
  \end{bmatrix}^{\transpose} \neq 0$.  Such an element belongs to the kernel
  if and only if it is equal to $\begin{bmatrix}
    \lambda & c \lambda & d \lambda & 0 & 0 & 0 & 0
  \end{bmatrix}^{\transpose}$ for some $\lambda \in \CC \setminus \{ 0 \}$ and
  $g_1(\alpha, \beta, \gamma) = 0 = g_2 (\alpha, \beta, \gamma)$ where
  $g_2 := \frac{\partial g_1}{\partial x_1} + c \frac{\partial g_1}{\partial
    x_2} + d \frac{\partial g_1}{\partial x_3}$.  Parametrizing the line $L$
  by 
  \[
    \ell(s,t) := (s : s \alpha + t : s \beta + t c : s \gamma + t d)
  \]
  for $(s : t) \in \PP^1$ shows that the line $L$ intersects the surface $S$
  with multiplicity at least $3$ at $v$ if and only if
  $f \bigl( \ell(s,t) \bigr)$ is divisible by $t^3$.  This is equivalent to
  the conditions that
  $g_1(\alpha, \beta, \gamma) = \frac{\partial}{\partial t}\bigl[ f \bigl(
  \ell(s,t) \bigr) \bigr] \big|_{(1,0)} = 0$ and
  $g_2(\alpha, \beta, \gamma) = \frac{\partial^2}{\partial^2 t}\bigl[ f \bigl(
  \ell(s,t) \bigr) \bigr]\big|_{(1,0)} = 0$.  \qed
\end{proof}

\begin{remark}
  If $S$ is a surface of degree at most $3$ and the line $L$ is bitangent to
  $S$, then $L$ is contained in $S$. Indeed, if $L$ is not contained in $S$,
  then the intersection $L \cap S$ consists of at most $3$ points, counted
  with multiplicity, so $L$ cannot be a bitangent.  On the other hand, when
  the degree of $S$ is at least four, the hypothesis that $S$ does not contain
  any lines is relatively mild. For example, a general surface of degree at
  least $4$ in $\PP^3$ does not contain a line; see \cite{05vdWLines}.
\end{remark}

\section{Projective Duality}
\label{05sec:dual}

This section uses projective duality\index{projective~duality} to compute the
bidegrees of the components of the singular locus of the Hurwitz
hypersurface\index{Hurwitz~hypersurface} of a surface in $\PP^3$, and to
relate the secant congruence of a curve to the bitangent congruence of its
dual surface.

Let $\PP^n$ be the projectivization of the vector space $\CC^{n+1}$.  If
$(\PP^n)^*$ denotes the projectivization of the dual vector space
$(\CC^{n+1})^*$, then the points in $(\PP^n)^*$ correspond to hyperplanes in
$\PP^n$.  Given a projective subvariety $X \subset \PP^n$, a hyperplane in
$\PP^n$ is tangent to $X$ at a smooth point $x \in X$ if it contains the
embedded tangent space $T_x(X) \subset \PP^n$.  The \emph{dual
  variety}\index{variety!dual} $X^\vee$ is the Zariski closure in $(\PP^n)^*$
of the set of all hyperplanes in $\PP^n$ that are tangent to $X$ at some
smooth point.  

\begin{example}
  If $V$ is a linear subspace of $\CC^{n+1}$ and $X := \PP(V)$, then the dual
  variety $X^\vee$ is the set of all hyperplanes containing $\PP(V)$, which is
  exactly the projectivization of the orthogonal complement
  $V^{\perp} \subset (\CC^{n+1})^*$ with respect to the nondegenerate bilinear
  form $(x,y) \mapsto \sum_{i=0}^n x_i y_i$.  In particular, $X^\vee$ is not
  the projectivization of $V^*$, and $(\PP^n)^\vee = \varnothing$.
\end{example}

\begin{remark}
  The dual of a line in $\PP^2$ is a point, and the dual of a plane curve of
  degree at least $2$ is again a plane curve.  The dual of a line in $\PP^3$
  is a line, and the dual of a curve in $\PP^3$ of degree at least $2$ is a
  surface.  The dual of plane in $\PP^3$ is a point and the dual of a surface
  in $\PP^3$ of degree at least $2$ can be either a curve or a surface.
\end{remark}

From our perspective, the key properties of dual varieties are the following.
If $X$ is irreducible, then its dual $X^\vee$ is also irreducible; see
\cite[Proposition~I.1.3]{05gkz}.  Moreover, the Biduality Theorem shows that,
if $x \in X$ is smooth and $H \in X^\vee$ is smooth, then $H$ is tangent to
$X$ at the point $x$ if and only if the hyperplane in $(\PP^n)^*$
corresponding to $x$ is tangent to $X^\vee$ at the point $H$; see
\cite[Theorem~I.1.1]{05gkz}.  In particular, any irreducible variety
$X \subset \PP^n$ is equal to its double dual $(X^\vee)^\vee \subset \PP^n$;
again see \cite[Theorem~I.1.1]{05gkz}.

The next lemma, which relates the number and type of singularities of a plane
curve to the degree of its dual variety, plays an important role in
calculating the bidegrees of the bitangent and inflectional congruences.  A
point $v$ on a planar curve $C$ is a \emph{simple node} or a \emph{cusp} if
the formal completion of $\mathcal{O}_{C,v}$ is isomorphic to
$\CC[\![z_1, z_2]\!]/(z_1^2 + z_2^2)$ or $\CC[\![z_1, z_2]\!]/(z_1^3 + z_2^2)$
respectively; see Fig.~\ref{05fig:dual}.
\begin{figure}[ht]
  \centering
  \includegraphics[width=8cm]{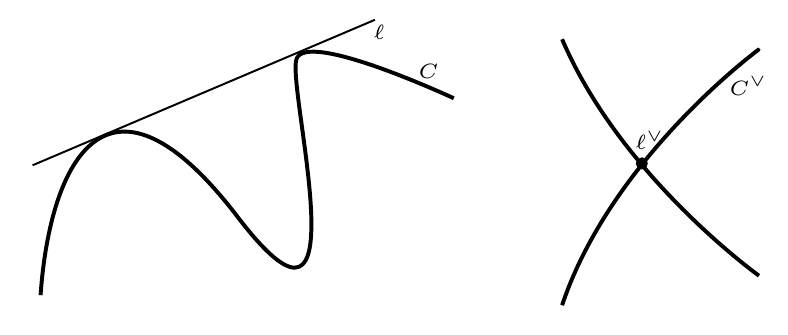}
  \includegraphics[width=8cm]{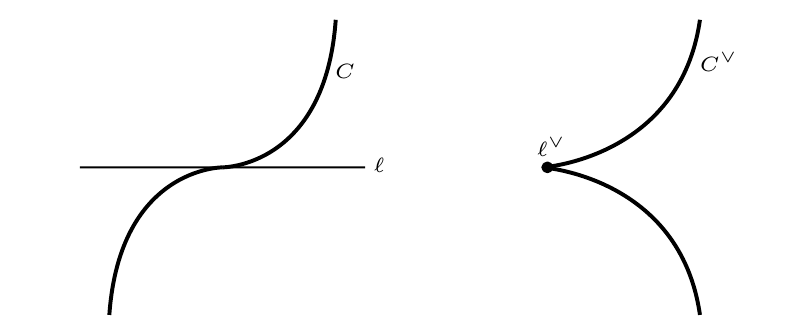}
  \caption{A bitangent and an inflectional line corresponding to a node
    and a cusp of the dual curve}
  \label{05fig:dual}
\end{figure}
Both singularities have multiplicity $2$; nodes have two distinct tangents and
cusps have a single tangent.

\begin{lemma}[{{\normalfont Pl\"ucker's
      formula\index{Plucker@Pl\"ucker!formula}
      \cite[Example~1.2.8]{05dolgachev}}}]
  \label{05lem:plucker}
  If $C \subset \PP^2$ is an irreducible curve of degree $d$ with exactly
  $\kappa$ cusps, $\delta$ simple nodes, and no other singularities, then the
  degree of the dual curve\index{curve!dual} $C^\vee$ is
  $d(d-1) - 3\kappa - 2 \delta$.
\end{lemma}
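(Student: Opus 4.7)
The plan is to realise $\deg(C^\vee)$ as the number of tangent lines to $C$ passing through a general point $p \in \PP^2$, and then compute this number by intersecting $C$ with a \emph{polar curve} and carefully accounting for local contributions at the singularities of $C$.

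First I would note that a generic line in $(\PP^2)^*$ corresponds to the pencil of lines in $\PP^2$ through a general point $p$, so $\deg(C^\vee)$ equals the number of tangent lines to $C$ meeting $p$, counted with the degree of the Gauss map $C_{\mathrm{sm}} \to C^\vee$. By the Biduality Theorem this Gauss map is birational, so for generic $p$ — avoiding the finitely many bitangents and inflectional tangents of $C$, and the images of the singular points under the Gauss map — no two smooth points share a tangent line through $p$, and $\deg(C^\vee)$ is simply the number of smooth points $q \in C$ whose tangent line contains $p$.

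Second, writing $C = \variety(F)$ with $\deg F = d$, such a smooth point $q$ satisfies $\sum_{i=0}^2 p_i \tfrac{\partial F}{\partial x_i}(q) = 0$, so $q$ lies on the polar curve $P_p(C) := \variety\bigl( \sum_{i=0}^2 p_i \tfrac{\partial F}{\partial x_i} \bigr)$ of degree $d-1$. Since all partials of $F$ vanish at singular points of $C$, the curve $P_p(C)$ automatically passes through every node and cusp of $C$. As $C$ is irreducible of degree $d \geq 2$ and $P_p(C) \ne C$ for generic $p$, Bezout's Theorem gives
\[
  d(d-1) \;=\; \sum_{q \in C_{\mathrm{sm}},\; p \in T_q(C)} i_q\bigl( C, P_p(C) \bigr) \;+\; \sum_{q \in \operatorname{Sing}(C)} i_q \bigl( C, P_p(C) \bigr) \, ,
\]
and a standard transversality argument shows the smooth contributions are each equal to $1$ for generic $p$.

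The main obstacle — and the real content of the lemma — is the local computation of $i_q(C,P_p(C))$ at each singular point. Using the formal isomorphisms $\widehat{\cO}_{C,q} \cong \CC[\![z_1,z_2]\!]/(z_1^2 + z_2^2)$ at a node and $\CC[\![z_1,z_2]\!]/(z_1^3 + z_2^2)$ at a cusp, I would choose local analytic coordinates on $\PP^2$ centred at $q$, expand $F$ and the polar combination $\sum p_i F_{x_i}$ in power series, and compute
\[
  i_q \bigl( C, P_p(C) \bigr) \;=\; \dim_\CC \widehat{\cO}_{\PP^2,q} \big/ \bigl( F, \textstyle\sum_i p_i F_{x_i} \bigr) \, .
\]
Choosing $p$ off the (one or two) tangent lines at $q$, a direct computation in these normal forms yields $i_q = 2$ at a simple node and $i_q = 3$ at a cusp. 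Substituting into the Bezout identity above gives
\[
  \deg(C^\vee) \;=\; d(d-1) - 2\delta - 3\kappa \, ,
\]
as claimed.
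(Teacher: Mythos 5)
Your proposal is correct and follows essentially the same route as the paper's (sketched) proof: both identify $\deg(C^\vee)$ with the number of tangent lines through a general point, intersect $C$ with the degree-$(d-1)$ polar curve to get $d(d-1)$ via Bezout, and subtract local intersection multiplicities $2$ at each node and $3$ at each cusp. Your version merely spells out more of the details (birationality of the Gauss map, transversality at smooth points, and the normal-form computation of the local multiplicities) that the paper leaves implicit.
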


\begin{proof}[Sketch]
  Let $f \in \CC[x_0, x_1, x_2]$ be the defining equation for $C$ in $\PP^2$,
  so we have $\deg(f) = d$.  To begin, assume that $C$ is smooth.  The degree
  of its dual $C^\vee \subset (\PP^2)^*$ is the number of points of $C^\vee$
  lying on a general line $L \subset (\PP^2)^*$.  By duality, the degree
  equals the number of tangent lines to $C$ passing through a general point
  $y \in \PP^2$.  Such a tangent line at the point $v \in C$ passes through
  the point $y$ if and only if
  $g := y_0 \frac{\partial f}{\partial x_0}(v) + y_1 \frac{\partial
    f}{\partial x_1}(v) + y_2 \frac{\partial f}{\partial x_2}(v) = 0$.  Hence,
  the degree of $C^\vee$ is the number of points in $\variety(f,g)$; the
  vanishing set of $f$ and $g$.  Since $\deg(g) = d-1$, this finite set
  contains $d(d-1)$ points.

  If $C$ is singular, then the degree of $C^\vee$ is the number of lines that
  are tangent to $C$ at a smooth point and pass through the general point
  $y$. Those smooth points are contained in the set $\variety(f,g)$, but all
  of the singular points also lie in $\variety(f,g)$. The curve $\variety(g)$
  passes through each node of $C$ with intersection multiplicity two and
  through each cusp of $C$ with intersection multiplicity $3$.  Therefore,
  we conclude that $\deg(C^\vee) = d(d-1) - 3 \kappa -2 \delta$. \qed
\end{proof}

Using Lemma~\ref{05lem:plucker}, we can compute the degree of the Hurwitz
hypersurface of a smooth surface; this formula also follows from Theorem~1.1
in \cite{05hurwitz}.

\begin{proposition}
  \label{05prop:hurwDeg}
  For an irreducible smooth surface\index{surface!smooth} $S \subset \PP^3$ of
  degree $d$ with $d \geq 2$, the degree of the Hurwitz
  hypersurface\index{Hurwitz~hypersurface} $\operatorname{CH}_1(S)$ is
  $d(d-1)$.
\end{proposition}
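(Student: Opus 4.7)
The plan is to use the geometric definition of degree from Section~\ref{05sec:grass}: I will fix a general plane $H \subset \PP^3$ and a general point $v \in H$, and count the lines $L \subset \PP^3$ with $v \in L \subset H$ that lie in $\operatorname{CH}_1(S)$, i.e.\ are tangent to $S$ at a smooth point. Since tangency is a closed condition and $v,H$ are generic, these lines come from honest smooth-point tangencies, so they are counted without multiplicity.

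The key reduction is to replace tangent lines of $S$ lying in $H$ by tangent lines of the plane curve $C := S \cap H$. First I would invoke Bertini's theorem to conclude that, since $S$ is smooth and $H$ is general, the section $C \subset H \cong \PP^2$ is a smooth plane curve of degree $d$. Second, I would note that the dual surface $S^\vee \subset (\PP^3)^*$ is a proper subvariety, so for $H$ general we have $H \notin S^\vee$; equivalently, $T_x(S) \neq H$ for every $x \in C$. Consequently, for each $x \in C$ the intersection $T_x(S) \cap H$ is a line in $H$ through $x$, and this line is precisely the tangent line $T_x(C)$.

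Now I can identify the lines being counted. A line $L \subset H$ with $v \in L$ is tangent to $S$ at a smooth point $x$ iff $L \subset T_x(S)$, which (since $L \subset H$) is equivalent to $L \subset T_x(S) \cap H = T_x(C)$; as both are $1$-dimensional, this forces $L = T_x(C)$, meaning $L$ is tangent to $C$ at $x$. Conversely, any tangent line to $C$ at a point $x$ lies in $T_x(S)$, hence is tangent to $S$. Thus the degree we want equals the number of tangent lines to $C$ passing through the general point $v \in H$, which by duality is the degree of the dual curve $C^\vee \subset H^*$.

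Finally, I apply Lemma~\ref{05lem:plucker} with $\kappa = \delta = 0$ (since $C$ is smooth) to conclude $\deg(C^\vee) = d(d-1)$, giving $\deg(\operatorname{CH}_1(S)) = d(d-1)$. The only delicate point is the genericity argument ensuring $H \notin S^\vee$ so that every point of $C$ yields a genuine tangent line in $H$ rather than a whole pencil; once this is in place, the proof is a clean two-line reduction to Plücker's formula.
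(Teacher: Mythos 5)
Your proposal is correct and follows essentially the same route as the paper: restrict to a general plane section $C = S \cap H$ (smooth by Bertini), identify the tangent lines to $S$ lying in $H$ with the tangent lines to $C$, and count those through a general point via $\deg(C^\vee) = d(d-1)$ from Lemma~\ref{05lem:plucker}. Your extra care in verifying $T_x(S) \neq H$ for $x \in C$ is a sound (and in fact automatic, since $T_x(S) = H$ would force $C$ to be singular at $x$) justification of a step the paper leaves implicit.
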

  
\begin{proof}
  Let $H \subset \PP^3$ be a general plane and $v \in H$ be a general point.
  The degree of $\operatorname{CH}_1(S)$ is the number of tangent lines $L$ to
  $S$ such that $v \in L \subset H$.  Bertini's
  Theorem~\cite[Theorem~17.16]{05harris} implies that the intersection
  $S \cap H$ is a smooth plane curve of degree $d$.  The degree of
  $\operatorname{CH}_1(S)$ is the number of tangent lines to $S \cap H$
  passing through the general point $v$; see Fig.~\ref{05fig:hurwDeg}.
  \begin{figure}[ht]
    \centering
    \includegraphics[width=7cm]{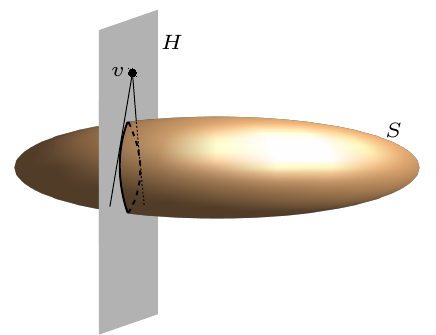}
    \caption{The degree of the Hurwitz hypersurface}
    \label{05fig:hurwDeg}
  \end{figure}
  By definition, this is equal to the degree of the dual plane curve
  $(S \cap H)^\vee$, so Lemma~\ref{05lem:plucker} shows 
  $\deg \bigl( \operatorname{CH}_1(S) \bigr) = d(d-1)$.  \qed
\end{proof}

Using Lemma~\ref{05lem:plucker}, we can also count the number of bitangents
and inflectional tangents to a general smooth plane curve.

\begin{proposition}
  \label{05prop:bitangents}
  A general smooth irreducible curve in $\PP^2$ of degree $d$ has exactly
  $\frac{1}{2}d(d-2)(d-3)(d+3)$ bitangents\index{bitangent} and $3d(d-2)$
  inflectional tangents.
\end{proposition}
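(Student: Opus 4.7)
The plan is to apply Pl\"ucker's formula (Lemma~\ref{05lem:plucker}) to both $C$ and its dual curve $C^\vee$, combined with a direct count of inflections using the Hessian. The geometric input comes from the Biduality Theorem: a smooth point of $C^\vee$ is a simple tangent line to $C$ at a single smooth point, and the singularities of $C^\vee$ correspond to the special tangents we wish to count.

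First I would identify the two types of singularities. A bitangent line $L$ of $C$, tangent at distinct smooth points $p_1, p_2$, corresponds by biduality to a point $[L] \in C^\vee$ through which two smooth branches of $C^\vee$ pass with distinct tangent directions, one corresponding to each $p_i$; hence $[L]$ is a simple node of $C^\vee$. An inflectional tangent at $p$, meeting $C$ with multiplicity at least $3$, gives a cusp of $C^\vee$ at $[L]$. For a sufficiently general smooth curve $C$, the only singularities of $C^\vee$ are these nodes and cusps, thanks to a standard genericity argument ruling out hyperflexes (multiplicity $\geq 4$ tangents), tritangents, and lines which are simultaneously bitangent and inflectional.

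Next I would count inflectional tangents directly. If $f \in \CC[x_0,x_1,x_2]$ defines $C$ and $H(f) := \det\bigl( \partial^2 f / \partial x_i \partial x_j \bigr)_{0 \leq i,j \leq 2}$ denotes its Hessian, then a smooth point $v \in C$ is an inflection if and only if $H(f)(v) = 0$. Since $\deg H(f) = 3(d-2)$ and $C$ is irreducible (so $H(f)$ does not vanish identically on $C$ for general $f$), Bezout's theorem gives $|\variety(f) \cap \variety(H(f))| = 3d(d-2)$, and for general $C$ these intersections are transverse. Thus $C$ has exactly $3d(d-2)$ inflectional tangents, yielding $\kappa^\vee = 3d(d-2)$ cusps on $C^\vee$.

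Finally I would determine the number of bitangents by applying Lemma~\ref{05lem:plucker} to $C^\vee$. Since $C$ is smooth, the lemma gives $\deg(C^\vee) = d(d-1)$. Applying it again to $C^\vee$, whose dual $(C^\vee)^\vee = C$ has degree $d$ by the Biduality Theorem, produces
\[
  d \;=\; d(d-1)\bigl( d(d-1) - 1 \bigr) - 3 \kappa^\vee - 2 \delta^\vee \, ,
\]
where $\delta^\vee$ is the number of nodes of $C^\vee$, i.e.\ the number of bitangents of $C$. Substituting $\kappa^\vee = 3d(d-2)$ and simplifying,
\[
  2\delta^\vee = d\bigl[(d-1)(d^2-d-1) - 9(d-2) - 1\bigr] = d(d-2)(d^2-9) \, ,
\]
so $\delta^\vee = \tfrac{1}{2}d(d-2)(d-3)(d+3)$. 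The main obstacle is the genericity step justifying that $C^\vee$ has only nodes and cusps as singularities; once that is granted, everything else reduces to Pl\"ucker's formula, Bezout's theorem, and an elementary factorization.
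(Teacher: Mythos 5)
Your proposal is correct, and it reaches the answer by a genuinely different route for one of the two required equations. Like the paper, you identify bitangents with simple nodes of $C^\vee$ and inflectional tangents with cusps, you take $\deg(C^\vee) = d(d-1)$ from Lemma~\ref{05lem:plucker}, and you apply that lemma a second time to $C^\vee$ together with $(C^\vee)^\vee = C$ to get the linear relation $d = d(d-1)\bigl(d(d-1)-1\bigr) - 3\kappa - 2\delta$. The difference is in how the second piece of information is obtained: the paper pairs this relation with the equality of geometric genera of $C$ and $C^\vee$ and the genus--degree formula for a nodal--cuspidal plane curve, producing a $2\times 2$ linear system in $\kappa$ and $\delta$ whose solution yields both counts simultaneously; you instead compute $\kappa = 3d(d-2)$ directly as the intersection of $C$ with its Hessian curve of degree $3(d-2)$ via B\'ezout, and then solve the single Pl\"ucker relation for $\delta$. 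Your algebra checks out: $d^3 - 2d^2 - 9d + 18 = (d-2)(d-3)(d+3)$. What each approach buys: the paper's argument stays entirely within the duality/genus framework already set up (and needs the birational invariance of geometric genus plus the singular genus--degree formula), deriving the flex count as an output; yours imports the classical Hessian criterion for inflections (including the transversality of $C \cap \variety(H(f))$ for general $C$, which you should cite or justify, e.g.\ \cite[pp.~277--280]{05GH}), but in exchange avoids any genus computation and gives an independent check on $\kappa$. Both proofs share the same essential genericity hypothesis --- that $C^\vee$ has only simple nodes and ordinary cusps, i.e.\ no hyperflexes, tritangents, or bitangents that are also inflectional --- which you correctly flag as the main point requiring care.
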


\begin{proof}
  Let $C \subset \PP^2$ be a general smooth irreducible curve of degree $d$.
  A bitangent to $C$ corresponds to a node of $C^\vee$, and an inflectional
  tangent to $C$ corresponds to a cusp of $C^\vee$; see Fig.~\ref{05fig:dual}
  and \cite[pp.~277--278]{05GH}.  Lemma~\ref{05lem:plucker} shows that
  $C^\vee$ has degree $d(d-1)$.  Let $\kappa$ and $\delta$ be the number of
  cusps and nodes of $C^\vee$, respectively.  Applying
  Lemma~\ref{05lem:plucker} to the plane curve $C^\vee$ yields
  \[
    d = \deg(C) = \deg\bigl( (C^\vee)^\vee \bigr) = d(d-1) \bigl( d(d-1) -1
    \bigr) - 3 \kappa -2 \delta \, .
  \]  
  The dual curves $C$ and $C^\vee$ have the same geometric genus; see
  \cite[Proposition~1.5]{05tevelev}.  Hence, the genus-degree
  formula~\cite[p.~54, Eq.~(7)]{05genusDegree} gives
  \[
    \tfrac{1}{2}(d-1)(d-2) = \operatorname{genus}(C) =
    \operatorname{genus}(C^\vee) = \tfrac{1}{2} \bigl( d(d-1)-1 \bigr)
    \bigl( d(d-1)-2 \bigr) - \kappa -\delta \, .
  \]
  Solving this system of two linear equations in $\kappa$ and $\delta$, we
  obtain $\kappa = 3d(d-2)$ and $\delta = \frac{1}{2}d(d-2)(d-3)(d+3)$. \qed
\end{proof}

The next result is the main theorem in this section and solves Problem~4 on
Surfaces in~\cite{05Sturmfels}.  The bidegrees of the bitangent and the
inflectional congruence for a general smooth surface appear
in~\cite[Proposition~3.3]{05arrondo}, and the bidegree of the inflectional
congruence also appears in~\cite[Proposition~4.1]{05petitjean}.

\begin{theorem}
  \label{05thm:bidegTangent}
  Let $S \subset \PP^3$ be a general smooth irreducible
  surface\index{surface!smooth} of degree $d$ with $d \geq 4$. The
  bidegree\index{bidegree} of $\operatorname{Bit}(S)$ is
  $\bigl( \tfrac{1}{2}d(d-1)(d-2)(d-3), \tfrac{1}{2}d(d-2)(d-3)(d+3) \bigr)$,
  and the bidegree of $\operatorname{Infl}(S)$ is
  $\bigl( d(d-1)(d-2), 3d(d-2) \bigr)$.
\end{theorem}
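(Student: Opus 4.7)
The plan is to compute each bidegree entry separately, via plane sections for the classes and via projection from a general point for the orders. For the classes: by Bertini's theorem a general plane $H \subset \PP^3$ meets $S$ in a smooth irreducible plane curve $C := S \cap H$ of degree $d$, and a line $L \subset H$ is bitangent (respectively inflectional) to $S$ if and only if $L$ is bitangent (resp.\ inflectional) to $C$, since $T_x(C) = T_x(S) \cap H$ at any smooth $x \in C$. Applying Proposition~\ref{05prop:bitangents} to $C$ therefore yields the classes $\tfrac{1}{2}d(d-2)(d-3)(d+3)$ and $3d(d-2)$ directly.

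For the orders, fix a general point $v \in \PP^3$ and consider the projection $\pi_v \colon \PP^3 \dashrightarrow \PP^2$ from $v$. Restricted to $S$, this is a finite morphism of degree $d$ whose ramification locus is the curve $R := \{ x \in S : v \in T_x(S) \}$, cut out on $S$ by the polar hypersurface $\sum_i v_i \tfrac{\partial f}{\partial x_i} = 0$ of degree $d-1$, where $f$ defines $S$. Hence $R$ has degree $d(d-1)$ in $\PP^3$; since $S$ is smooth, the polar linear system is base-point-free on $S$, so for general $v$ the curve $R$ is smooth (Bertini), and $\pi_v|_R$ is birational onto its image, the branch curve $B := \pi_v(R) \subset \PP^2$, which is therefore of degree $d(d-1)$. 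The key geometric dictionary is: a node of $B$ at $\pi_v(x)$ corresponds to a second point $x' \in R$ with the same image, i.e.\ a line through $v$ tangent to $S$ at two distinct points (a bitangent through $v$); a cusp of $B$ corresponds to higher-order ramification of $\pi_v|_R$ at some $x$, i.e.\ a line through $v$ with contact of order at least $3$ with $S$ at $x$ (an inflectional tangent through $v$). For general $S$ and $v$ these are the only singularities of $B$, so if $\delta$ and $\kappa$ denote the numbers of nodes and cusps of $B$, then the orders of $\operatorname{Bit}(S)$ and $\operatorname{Infl}(S)$ are $\delta$ and $\kappa$ respectively.

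Two independent linear equations then pin down $\delta$ and $\kappa$. First, Pl\"ucker's formula (Lemma~\ref{05lem:plucker}) applied to $B$ gives $\deg(B^\vee) = d(d-1)\bigl(d(d-1)-1\bigr) - 3\kappa - 2\delta$. A tangent line of $B$ at $\pi_v(x)$ is the image of the tangent plane $T_x(S)$, which contains $v$, so $B^\vee$ is identified with the intersection of the dual surface $S^\vee \subset (\PP^3)^*$ with the hyperplane $v^\perp$ dual to $v$; a standard double-polar computation gives $\deg(S^\vee) = d(d-1)^2$, whence $\deg(B^\vee) = d(d-1)^2$. Second, since $\pi_v|_R$ is birational we have $g(B) = g(R)$, and adjunction on $S$ with $K_S = \cO_S(d-4)$ yields $2g(R) - 2 = R \cdot (R + K_S) = d(d-1)(2d-5)$; combined with the genus-degree formula for the plane curve $B$ of degree $d(d-1)$ with $\delta$ nodes and $\kappa$ cusps, this gives a second linear equation. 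Solving the resulting $2 \times 2$ system produces $\kappa = d(d-1)(d-2)$ and $\delta = \tfrac{1}{2}d(d-1)(d-2)(d-3)$, matching the claimed orders. The main obstacle is verifying, for generic $S$ and $v$, that the branch curve $B$ is irreducible and has exactly nodes and cusps as singularities, with bijective correspondences to bitangents and inflectional tangents through $v$; this requires local normal forms for $\pi_v$ near simple, double, and higher tangencies, together with a dimension count in the universal incidence variety to rule out coincidences such as triple tangents through $v$ or tangent planes of contact order at least four through $v$.
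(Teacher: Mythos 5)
Your proposal is correct, and it follows the paper's strategy almost exactly: the classes come from Bertini plus Proposition~\ref{05prop:bitangents} applied to a general plane section, and the orders come from projecting the polar (ramification) curve $R$ of degree $d(d-1)$ from the general point $v$, identifying bitangents through $v$ with nodes and inflectional tangents through $v$ with cusps of the branch curve $B$, and then solving a $2\times 2$ linear system for $\delta$ and $\kappa$. The first equation of your system is the same as the paper's: Pl\"ucker's formula (Lemma~\ref{05lem:plucker}) for $B$ together with the identification $\deg(B^\vee)=\deg(S^\vee)=d(d-1)^2$ via tangent planes containing a general line through $v$. Where you genuinely diverge is in the second equation. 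The paper computes $\kappa$ directly by B\'ezout: an inflectional tangent through $v$ at $x$ is characterized by $x\in \variety(f,g,h)$ with $g$ the first polar (degree $d-1$) and $h$ the second polar (degree $d-2$), giving $\kappa=d(d-1)(d-2)$ outright, after which Pl\"ucker's formula alone determines $\delta$. You instead compare genera: $g(B)=g(R)$ by birationality of $\pi_v|_R$, compute $g(R)$ by adjunction on $S$ (giving $2g(R)-2=d(d-1)(2d-5)$), and equate with the genus--degree formula for the nodal--cuspidal plane curve $B$, which yields $\delta+\kappa=\tfrac{1}{2}d(d-1)^2(d-2)$; combined with $3\kappa+2\delta=d^2(d-1)(d-2)$ this gives the same $\kappa$ and $\delta$. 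Your route buys independence from the claim that $\variety(f,g,h)$ is a reduced set of $d(d-1)(d-2)$ points (which the paper justifies only by genericity of $S$), at the cost of needing smoothness of $R$ and the genus--degree formula with both nodes and cusps; it also exactly parallels the paper's own proof of Proposition~\ref{05prop:bitangents}. Both arguments share the same residual genericity burden, which you correctly flag: one must verify that for general $S$ and $v$ the branch curve has only ordinary nodes and cusps, in bijection with the bitangents and inflectional tangents through $v$.
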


\begin{proof}
  For a general plane $H \subset \PP^3$, Bertini's
  Theorem~\cite[Theorem~17.16]{05harris} implies that the intersection
  $S \cap H$ is a smooth plane curve of degree $d$. By
  Proposition~\ref{05prop:bitangents}, the number of bitangents to $S$
  contained in $H$ is $\frac{1}{2}d(d-2)(d-3)(d+3)$, which is the class of
  $\operatorname{Bit}(S)$.  Similarly, the number of inflectional tangents to
  $S$ contained in $H$ is $3d(d-2)$, which is the class of
  $\operatorname{Infl}(S)$.

  It remains to calculate the number of bitangents and inflectional lines of
  the surface $S$ that pass through a general point $y \in \PP^3$. Following
  the ideas in \cite[p.~230]{05piene}, let $f \in \CC[x_0,x_1,x_2,x_3]$ be the
  defining equation for $S$ in $\PP^3$, and consider the polar curve
  $C \subset S$ with respect to the point $y$; the set $C$ consists of all
  points $x \in S$ such that the line through $y$ and $x$ is tangent to $S$ at
  the point $x$; see Fig.~\ref{05fig:mainThm}.
  \begin{figure}[ht]
    \centering
    \includegraphics[width=10cm]{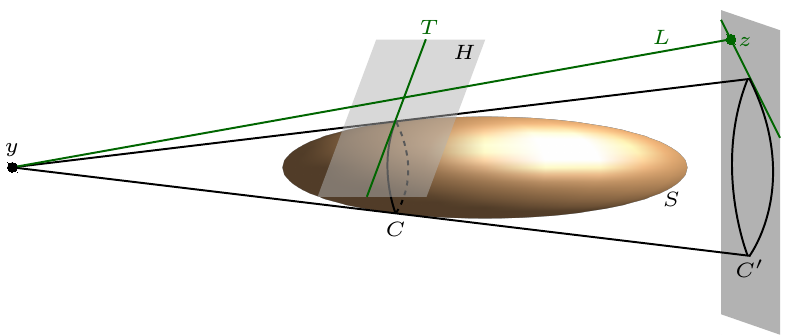}
    \caption{Polar curve}
    \label{05fig:mainThm}
  \end{figure}
  The condition that the point $x$ lies on the curve $C$ is equivalent to
  saying that the point $y$ belongs to $T_x(S)$.  As in the proof for
  Lemma~\ref{05lem:plucker}, we have $C = \variety(f,g)$ where
  $g := y_0 \frac{\partial f}{\partial x_0} + y_1 \frac{\partial f}{\partial
    x_1} + \dotsb + y_3 \frac{\partial f}{\partial x_3}$. Thus, the curve $C$
  has degree $d(d-1)$.

  Projecting away from the point $y$ gives the rational map
  $\pi_y \colon \PP^3 \dashrightarrow \PP^2$.  Restricted to the surface $S$,
  this map is generically finite, with fibres of cardinality $d$, and is
  ramified over the curve $C$. If $C'$ is the image of $C$ under $\pi_y$, then
  a bitangent to the surface $S$ that passes through $y$ contains two points
  of $C$ and these points are mapped to a simple node in $C'$; see
  Fig.~\ref{05fig:proj}.
  \begin{figure}[ht]
    \centering
    \includegraphics[width=11cm]{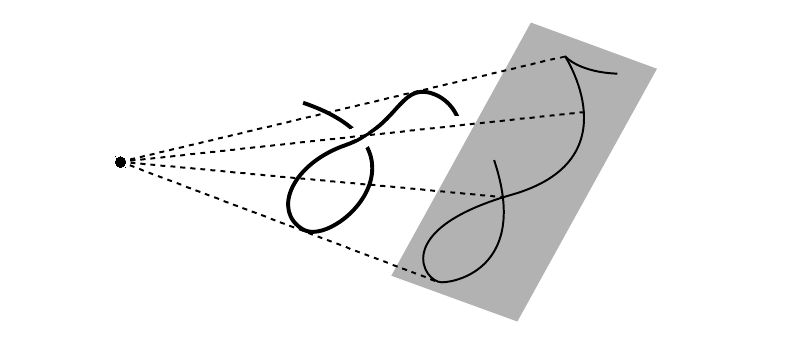}
    \caption{A secant projecting onto a node and a tangent projecting to a cusp}
    \label{05fig:proj}
  \end{figure}
  All of these nodes in $C'$ have two distinct tangent lines because no
  bitangent line passing through $y$ is contained in a bitangent plane that is
  tangent at the same two points as the line; the bitangent planes to $S$ form
  a $1$-dimensional family, so the union of bitangent lines they contain is a
  surface in $\PP^3$ that does not contain the general point $y$.

  We claim that the inflectional lines to $S$ passing through the point $y$
  are exactly the tangent lines of $C$ passing through $y$.  The line between
  a point $x \in S$ and the point $y$ is parametrized by the map
  $\ell \colon \PP^1 \to \PP^3$ which sends the point $(s:t) \in \PP^1$ to the
  point
  $(s x_0 + t y_0 : s x_1 + t y_1 : s x_2 + t y_2 : s x_3 + t y_3) \in \PP^3$.
  It follows that this line is an inflectional tangent to $S$ if and only if
  $f \bigl( \ell(s,t) \bigr)$ is divisible by $t^3$.  This is equivalent to
  the conditions that
  $\frac{\partial}{\partial t}\bigl[ f \bigl( \ell(s,t) \bigr) \bigr]
  \big|_{(1,0)} = 0$ and
  $\frac{\partial^2}{\partial t^2}\bigl[ f \bigl( \ell(s,t) \bigr) \bigr]
  \big|_{(1,0)} = 0$, which means that $x \in C$ and
  $y_0 \frac{\partial g}{\partial x_0} + y_1 \frac{\partial g}{\partial x_1} +
  \dotsb + y_3 \frac{\partial g}{\partial x_3} = 0$, or in other words
  $y \in T_x(C)$.  Therefore, the inflectional lines to $S$ passing through
  $y$ are the tangents to $C$ passing through $y$, and are mapped to the cusps
  of $C'$; again see Fig.~\ref{05fig:proj}.

  Since the bitangent and inflectional lines to $S$ passing through $y$
  correspond to nodes and cusps of $C'$, it suffices to count the number
  $\kappa'$ of cusps and the number $\delta'$ of simple nodes in the plane curve
  $C'$.  We subdivide these calculations as follows.
  \begin{description}
  \item[$\kappa' = d(d-1)(d-2)$:] From our parametrization of the line through
    points $x \in S$ and $y$, we see that this line is an inflectional tangent
    to $S$ if and only if $x \in \variety(f,g,h)$ where
    $h := y_0 \frac{\partial g}{\partial x_0} + y_1 \frac{\partial g}{\partial
      x_1} + \dotsb + y_3 \frac{\partial g}{\partial x_3}$. Since
    $\deg(h) = d-2$ and $S$ is general, the set $\variety(f,g,h)$ consists of
    $d(d-1)(d-2)$ points. \vspace*{5pt}
  \item[$\deg\bigl( (C')^\vee \bigr) = \deg(S^\vee)$:] By duality, the degree
    $d'$ of the curve $(C')^\vee$ is the number of tangent lines to
    $C' \subset \PP^2$ passing through a general point $z \in \PP^2$.  The
    preimage of $z$ under the projection $\pi_y$ is a line $L \subset \PP^3$
    containing $y$; see Fig.~\ref{05fig:mainThm}.  Hence, $d'$ is the number
    of tangent lines to $C$ intersecting $L$ in a point different from $y$.
    For every line $T$ that is tangent to $C$ at a point $x$ and intersects
    the line $L$, it follows that the pair $L$ and $T$ spans the tangent plane
    of $S$ at the point $x$.  On the other hand, given any plane $H$ which is
    tangent to $S$ at the point $x$ and contains $L$, we deduce that $x$ must
    lie on the polar curve $C$ and $H$ is spanned by $L$ and the tangent line
    to $C$ at $x$, so this tangent line intersects $L$.  Therefore, $d'$ is
    the number of tangent planes to $S$ containing $L$, which is the degree of
    the dual surface $S^\vee$. \vspace*{5pt}
  \item[$\deg(S^\vee) = d(d-1)^2$:] By duality, the degree of $S^\vee$ is the
    number of tangent planes to the surface $S$ containing a general line, or
    the number of tangent planes to $S$ containing two general points
    $y, z \in \PP^3$.  Thus, this is the number of intersection points of the
    two polar curves of $S$ determined by $y$ and $z$, which is the
    cardinality of the set $\variety(f,g,\tilde{g})$ where
    $\tilde{g} := z_0 \frac{\partial f}{\partial x_0} + z_1 \frac{\partial
      f}{\partial x_1} + \dotsb + z_3 \frac{\partial f}{\partial x_3}$.  Since
    $\deg(\tilde{g}) = d-1$, we conclude that $\deg(S^\vee) = d(d-1)^2$.
  \end{description}
  Finally, both the surface $S$ and the point $y$ are general, so
  Lemma~\ref{05lem:plucker} implies that
  $d(d-1)^2 = \deg \bigl( (C')^\vee \bigr) = \deg(C') \bigl( \deg(C') -1
  \bigr) - 3 d(d-1)(d-2) - 2 \delta'$.  Since $\deg(C') = \deg(C) = d(d-1)$,
  we have $\delta' = \frac{1}{2}d(d-1)(d-2)(d-3)$. \qed
\end{proof}

We end this section by proving that the secant locus of an irreducible smooth
curve is isomorphic to the bitangent congruence of its dual surface via the
natural isomorphism between $\Gr(1, \PP^3)$ and
$\Gr\bigl(1, (\PP^3)^* \bigr)$.  A subvariety $\Sigma \subset \Gr(1,\PP^3)$ is
sent under this isomorphism to the variety
$\Sigma^\perp \subset \Gr\bigl(1, (\PP^3)^* \bigr)$ consisting of the dual
lines $L^\vee$ for all $L \in \Sigma$.  For every congruence
$\Sigma \subset \Gr(1,\PP^3)$ with bidegree\index{bidegree} $(\alpha, \beta)$,
the bidegree of $\Sigma^\perp$ is $(\beta, \alpha)$.

\begin{theorem}
  \label{05thm:dual}
  If $C \subset \PP^3$ is a nondegenerate irreducible smooth
  curve\index{curve!space}, then we have
  $\operatorname{Sec}(C)^\perp = \operatorname{Bit}(C^\vee)$, the inflectional
  lines of $C^\vee$ are dual to the tangent lines of $C$, and
  $\operatorname{Infl}(C^\vee) \subset \operatorname{Bit}(C^\vee)$.
\end{theorem}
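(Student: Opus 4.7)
The unifying idea is a single computation: at any smooth point $H$ of $C^\vee$ corresponding by biduality to a unique smooth point $x \in C$ (of which $H$ is a tangent hyperplane), the tangent plane $T_H(C^\vee)$ coincides with the hyperplane $x^* \subset (\PP^3)^*$ consisting of all hyperplanes through $x$. Biduality yields the inclusion $T_H(C^\vee) \subset x^*$, and both are $2$-planes in $(\PP^3)^*$, so they agree. A corollary is that $C^\vee$ is developable: the tangent plane is constant along each ruling $T_x(C)^\vee \subset C^\vee$.

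For the first assertion $\operatorname{Sec}(C)^\perp = \operatorname{Bit}(C^\vee)$, I would prove both inclusions on dense open subsets and pass to closures. Given a general secant $L$ through distinct smooth points $x_1, x_2 \in C$, the plane $H_i$ spanned by $L$ and $T_{x_i}(C)$ is a hyperplane tangent to $C$ at $x_i$, so $H_i$ is a smooth point of $C^\vee$. Because $L$ contains $x_i$, its dual satisfies $L^\vee \subset x_i^* = T_{H_i}(C^\vee)$, and $L^\vee$ passes through both $H_1$ and $H_2$; thus $L^\vee \in \operatorname{Bit}(C^\vee)$. Conversely, given a bitangent $M$ of $C^\vee$ at smooth points $H_1, H_2$ with biduality partners $x_1 \neq x_2 \in C$, the inclusion $M \subset T_{H_i}(C^\vee) = x_i^*$ forces $x_i \in M^\vee$, so $M^\vee$ is a secant of $C$.

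For the second assertion, take a general inflectional tangent $M$ to $C^\vee$ at a smooth point $H$ with biduality partner $x \in C$. Then $M \subset T_H(C^\vee) = x^*$ gives $x \in M^\vee$. Intersection multiplicity $\geq 3$ at $H$ forces the direction of $M$ at $H$ to lie in the kernel of the second fundamental form of $C^\vee$ at $H$. Developability makes this kernel one-dimensional, spanned by the ruling direction along $T_x(C)^\vee$, so $M = T_x(C)^\vee$ and $M^\vee = T_x(C)$. Taking Zariski closures yields $\operatorname{Infl}(C^\vee) = \overline{\{T_x(C)^\vee : x \in C \text{ smooth}\}}$. The final inclusion $\operatorname{Infl}(C^\vee) \subset \operatorname{Bit}(C^\vee)$ is then immediate: tangent lines of $C$ lie in $\operatorname{Sec}(C)$ (as already used at the end of the proof of Theorem~\ref{05thm:singChow}), so their duals lie in $\operatorname{Sec}(C)^\perp = \operatorname{Bit}(C^\vee)$.

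The main obstacle will be making the second-fundamental-form step rigorous, because the candidate inflectional lines $T_x(C)^\vee$ are themselves contained in $C^\vee$, an edge case for the "multiplicity $\geq 3$" definition. The cleanest route is a local parametrization of $C^\vee$ coming from a parametrization $c(t)$ of $C$ together with a parameter along the pencil of hyperplanes tangent at $c(t)$; one then expands $f\bigl(\ell(s,t)\bigr)$ for a candidate line $\ell$ through $H$ and reads off the order-three vanishing conditions, mirroring the end of the proof of Theorem~\ref{05thm:singHurwitz}. Alternatively, one can describe each $T_x(C)^\vee$ as the limit of $L^\vee$ for secants $L$ of $C$ collapsing onto $T_x(C)$, so that the two tangency points $H_1, H_2$ merge at the osculating hyperplane of $C$ at $x$; this viewpoint simultaneously places $T_x(C)^\vee$ inside $\operatorname{Bit}(C^\vee)$ and identifies it as the relevant inflectional limit.
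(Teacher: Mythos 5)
Your proof of the first assertion is essentially the paper's: both arguments run on biduality, via the identification $T_H(C^\vee)=x^*$ for a smooth point $H\in C^\vee$ tangent to $C$ at $x$, and both check the two inclusions on honest secants and honest bitangents. One small point you should add to the converse inclusion: a line tangent to $C^\vee$ at two distinct smooth points $H_1,H_2$ may have $x_1=x_2$; such a line lies on a ruling $T_x(C)^\vee\subset C^\vee$ and hence equals that ruling, which is dual to a tangent line of $C$ and so still lands in $\operatorname{Sec}(C)^\perp$ --- the paper treats this case explicitly, and your closing remark that tangent lines lie in $\operatorname{Sec}(C)$ supplies exactly what is needed. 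For the inflectional assertion your route genuinely differs from the paper's. You argue intrinsically on the developable surface $C^\vee$: contact of order $\geq 3$ forces an asymptotic direction, and degeneracy of the second fundamental form pins that direction to the ruling, so the only inflectional tangents are the rulings $T_x(C)^\vee$. The paper instead slices by a general plane $H$ through the candidate line, identifies $(C^\vee\cap H)^\vee$ with the projection $\pi_y(C)$, and invokes the plane-curve dictionary (inflectional tangents of a plane curve correspond to cusps of its dual) to conclude $L^\vee=T_x(C)$. Your approach is more conceptual and immediately explains why $\operatorname{Infl}(C^\vee)$ is only a curve (cf.\ Remark~\ref{05rem:infcurve}); the paper's approach avoids any differential-geometric input by reducing everything to facts about plane curves already used in Sect.~\ref{05sec:dual}. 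The one substantive step you leave open is the rank-one statement for the second fundamental form (equivalently, that the unique asymptotic direction at a general smooth point of $C^\vee$ is the ruling direction); it is true --- rank $2$ at a general point would force $(C^\vee)^\vee=C$ to be a surface, and rank $0$ on an open set would force $C^\vee$ to be a plane, contradicting nondegeneracy --- and either of the two routes you propose (a local parametrization of $C^\vee$ built from one of $C$, or realizing the rulings as limits of duals of collapsing secants) would close it, but as written it is asserted rather than proved.
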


\begin{proof}
  We first show that
  $\operatorname{Sec}(C)^\perp = \operatorname{Bit}(C^\vee)$.  Consider a line
  $L$ that intersects $C$ at two distinct points $x$ and $y$, but is equal to
  neither $T_x(C)$ nor $T_y(C)$.  Together the line $L$ and $T_x(C)$ span a
  plane corresponding to a point $a \in C^\vee$.  Similarly, the span of the
  lines $L$ and $T_y(C)$ corresponds to a point $b \in C^\vee$.  Without loss
  of generality, we may assume that both $a$ and $b$ are smooth points in
  $C^\vee$.  By the Biduality Theorem, the points $a,b \in C^\vee$ must be
  distinct with tangent planes corresponding to $x$ and $y$.  Thus, the line
  $L^\vee$ is tangent to $C^\vee$ at the points $a$, $b$, and
  $\operatorname{Sec}(C)^\perp \subset \operatorname{Bit}(C^\vee)$.  To
  establish the other inclusion, let $L'$ be a line that is tangent to
  $C^\vee$ at two distinct smooth points $a,b \in C^\vee$.  The tangent planes
  at the points $a$, $b$ correspond to two points $x$, $y \in C$.  If
  $x \neq y$, then $(L')^{\vee}$ is the secant to $C$ through these two
  points.  If $x = y$, then the Biduality Theorem establishes that
  $(L')^{\vee}$ is the tangent line of $C$ at $x$.  In either case, we see
  that $\operatorname{Bit}(C^\vee) \subset \operatorname{Sec}(C)^\perp$, so
  $\operatorname{Sec}(C)^\perp = \operatorname{Bit}(C^\vee)$.

  For the second part, let $L$ be an inflectional line at a smooth point
  $a \in C^\vee$.  A point $y \in L^\vee \setminus C$ corresponds to a plane
  $H$ such that $L = T_a(C^\vee) \cap H$, so the line $L$ is also an
  inflectional line to the plane curve $C^\vee \cap H \subset H$.  Regarding
  $L$ as a subvariety of the projective plane $H$, its dual variety is a cusp
  on the plane curve $(C^\vee \cap H)^\vee \subset H^*$; see
  Fig.~\ref{05fig:dual}.  If
  $\pi_y \colon \PP^3 \dashrightarrow \PP^2 \cong H^*$ denotes the projection
  away from the point $y$, then we claim that $(C^\vee \cap H)^\vee$ equals
  $\pi_y(C)$; for a more general version see \cite[Proposition~6.1]{05holme}.
  Indeed, a smooth point $z \in \pi_y(C)$ is the projection of a point of $C$
  whose tangent line does not contain $y$.  Together this tangent line and the
  point $y$ span a plane such that its dual point $w$ is contained in the
  curve $C^\vee \cap H$.  Thus, the tangent line $T_z\bigl( \pi_y(C) \bigr)$
  equals $\pi_y(w^\vee)$; the latter is the line in $H^*$ dual to the point
  $w \in H$.  In other words, we have
  $\bigl( \pi_y(C) \bigr)^\vee \subset C^\vee \cap H$.  Since both curves are
  irreducible, this inclusion must be an equality.  Hence, when considering
  $L$ in the projective plane $H$, its dual point is a cusp of $\pi_y(C)$.  It
  follows that $L^\vee$ is the tangent line $T_x(C)$, where $x \in C$ is the
  point corresponding to the tangent plane $T_a(C^\vee)$; see
  Fig.~\ref{05fig:proj}.  Reversing these arguments shows that the dual of a
  tangent line to $C$ is an inflectional line to $C^\vee$.  Since every
  tangent line to $C$ is contained in $\operatorname{Sec}(C)$, we conclude
  that $\operatorname{Infl}(C^\vee) \subset \operatorname{Bit}(C^\vee)$.  \qed
\end{proof}

\subruninhead{Proof of Theorem~\ref{05thm:second}.}
This result is a restatement of Theorem~\ref{05thm:dual}. \qed

\begin{remark}
  \label{05rem:infcurve}
  Theorem~\ref{05thm:dual} shows that $\operatorname{Infl}(C^\vee)$ is a
  curve, as $\operatorname{Infl}(C^\vee)^\perp$ is the set of tangent lines to
  $C$, so the inflectional locus of a surface in $\PP^3$ is not always a
  congruence.
\end{remark}

\begin{remark}
  For a curve $C \subset \PP^3$ with dual surface $C^\vee \subset (\PP^3)^*$,
  Theorem~20 in \cite{05coisotropic} establishes that
  $\operatorname{CH}_0(C)^\perp = \operatorname{CH}_1(C^\vee)$.  Combined with
  Theorem~\ref{05thm:dual}, we see that the singular locus of the Hurwitz
  hypersurface\index{Hurwitz~hypersurface} $\operatorname{CH}_1(C^\vee)$, for
  smooth $C$, has just one component, namely the bitangent\index{bitangent}
  congruence.
\end{remark}

\begin{remark}
  For a surface $S \subset \PP^3$ with dual surface
  $S^\vee \subset (\PP^3)^*$, Theorem~20 in \cite{05coisotropic} also
  establishes that
  $\operatorname{CH}_1(S)^\perp = \operatorname{CH}_1(S^\vee)$.  If both $S$
  and $S^\vee$ have mild singularities, then the proof of Lemma~5.1 in
  \cite{05arrondo} shows that
  $\operatorname{Bit}(S)^\perp = \operatorname{Bit}(S^\vee)$.
\end{remark}

\section{Intersection Theory on $\Gr(1, \PP^3)$}
\label{05sec:intersec}

In this section, we recast the degree of a subvariety in $\Gr(1, \PP^3)$ in
terms of certain products in the Chow ring.

Consider a smooth irreducible variety $X$ of dimension $n$.  For each
$j \in \NN$, the group $Z^j(X)$ of codimension-$j$ cycles is the free abelian
group generated by the closed irreducible subvarieties of $X$ having
codimension $j$.  Given a variety $W$ of codimension $j-1$ and a nonzero
rational function $f$ on $W$, we have the cycle
$\operatorname{div}(f) := \sum_Z \operatorname{ord}_{Z}(f) \, Z$ where the sum
runs over all subvarieties $Z$ of $W$ with codimension $1$ in $W$ and
$\operatorname{ord}_{Z}(f) \in \ZZ$ is the order of vanishing of $f$ along
$Z$.  The group of cycles rationally equivalent to zero is the subgroup
generated by the cycles $\operatorname{div}(f)$ for all codimension-$(j-1)$
subvarieties $W$ of $X$ and all nonzero rational functions $f$ on $W$.  The
Chow group $A^j(X)$ is the quotient of $Z^j(X)$ by the subgroup of cycles
rationally equivalent to zero.  We typically write $[Z]$ for the class of a
subvariety $Z$ in the appropriate Chow group.  Since $X$ is the unique
subvariety of codimension $0$, we see that $A^0(X) \cong \ZZ$.  We also have
$A^1(X) \cong \Pic(X)$.  Crucially, the direct sum
$A^*(X) := \bigoplus_{j = 0}^n A^j(X)$ forms a commutative $\ZZ$-graded ring
called the \emph{Chow ring}\index{Chow!ring} of $X$.  The product arises from
intersecting cycles: for subvarieties $V$ and $W$ of $X$ having codimension
$j$ and $k$ and intersecting transversely, the product $[V][W] \in A^{j+k}(X)$
is the sum of the irreducible components of $V \cap W$.  More generally,
intersection theory aims to construct an explicit cycle to represent the
product $[V][W]$.

\begin{example}
  The Chow ring of $\PP^n$ is isomorphic to $\ZZ[H]/(H^{n+1})$ where $H$ is
  the class of a hyperplane.  In particular, any subvariety of codimension $d$
  is rationally equivalent to a multiple of the intersection of $d$
  hyperplanes.
\end{example}

To a given a vector bundle $\mathcal{E}$ of rank $r$ on $X$, we associate its
\emph{Chern classes}\index{Chern!class} $c_i(\mathcal{E}) \in A^i(X)$ for
$0 \leq i \leq r$; see \cite{05chern}.  When $\mathcal{E}$ is globally
generated, these classes are represented by degeneracy loci; the class
$c_{r+1-j}(\mathcal{E})$ is associated to the locus of points $x \in X$ where
$j$ general global sections of $\mathcal{E}$ fail to be linearly independent.
In particular, $c_r(\mathcal{E})$ is represented by the vanishing locus of a
single general global section.  Given a short exact sequence
$0 \to \mathcal{E}' \to \mathcal{E} \to \mathcal{E}'' \to 0$ of vector
bundles, the Whitney Sum Formula\index{Whitney~Sum~Formula} asserts that
$c_k(\mathcal{E}) = \sum_{i+j=k}c_i(\mathcal{E}') c_j(\mathcal{E}'')$; see
\cite[Theorem~3.2]{05fulton}.  Moreover, if
$\mathcal{E}^* := \mathcal{H}\!\textit{om}(\mathcal{E}, \mathcal{O}_X)$
denotes the dual vector bundle, then we have
$c_i(\mathcal{E}^*) = (-1)^i c_i(\mathcal{E})$ for $0 \leq i \leq r$; see
\cite[Remark~3.2.3]{05fulton}.

\begin{example}
  Given nonnegative integers $a_1, a_2, \dotsc, a_n$, consider the vector
  bundle
  $\mathcal{E} := \mathcal{O}_{\PP^n}(a_1) \oplus \mathcal{O}_{\PP^n}(a_2)
  \oplus \dotsb \oplus \mathcal{O}_{\PP^n}(a_n)$. Since each
  $\mathcal{O}_{\PP^n}(a_i)$ is globally generated, the Chern class
  $c_1 \bigl( \mathcal{O}_{\PP^n}(a_i) \bigr)$ is the vanishing locus of a
  general homogeneous polynomial $\CC[x_0,x_1, \dotsc, x_n]$ of degree $a_i$,
  so $c_1 \bigl( \mathcal{O}_{\PP^n}(a_i) \bigr) = a_iH$ in
  $A^*(\PP^n)$. Hence, the Whitney Sum Formula implies that
  $c_n(\mathcal{E}) = \prod_{i=1}^n c_1 \bigl( \mathcal{O}(a_i) \bigr) =
  \prod_{i=1}^n (a_i H)$.
\end{example}

\begin{example} 
  \label{05tangentBundle}
  If $\mathcal{T}_{\PP^n}$ is the tangent bundle on $\PP^n$, then we have the
  short exact sequence
  $0 \to \mathcal{O}_{\PP^n} \to \mathcal{O}_{\PP^n}(1)^{\oplus (n+1)} \to
  \mathcal{T}_{\PP^n} \to 0$; see \cite[Example~8.20.1]{05hartshorne}. The
  Whitney Sum Formula\index{Whitney~Sum~Formula} implies that
  $c_1(\mathcal{T}_{\PP^n}) = (n+1) c_1 \bigl( \mathcal{O}_{\PP^n}(1) \bigr) -
  c_1(\mathcal{O}_{\PP^n}) = (n+1)H$ and
  $c_2(\mathcal{T}_{\PP^n}) = c_2 \bigl( \mathcal{O}_{\PP^n}(1)^{\oplus(n+1)}
  \bigr) = \binom{n+1}{2}H^2$.
\end{example}

\begin{example} 
  \label{05tangentBundleSubvariety}
  Let $Y \subset \PP^n$ be a smooth hypersurface of degree $d$.  If
  $\mathcal{T}_{Y}$ is the tangent bundle of $Y$, then we have the exact
  sequence
  $0 \to \mathcal{T}_Y \to \mathcal{T}_{\PP^n}|_Y \to
  \mathcal{O}_{\PP^n}(d)|_Y \to 0$; see \cite[Proposition~8.20]{05hartshorne}.
  Setting $h := H|_Y$ in $A^*(Y)$, the Whitney Sum Formula implies that
  $c_1(\mathcal{T}_Y) = c_1(\mathcal{T}_{\PP^n}|_Y) - c_1\bigl(
  \mathcal{O}_{\PP^n}(d)|_Y \bigr) = (n+1)h - dh = (n+1-d)h$ and
  $c_2(\mathcal{T}_Y) = c_2(\mathcal{T}_{\PP^n}|_Y) - c_1(\mathcal{T}_{Y})
  c_1\bigl( \mathcal{O}_{\PP^n}(d)|_Y \bigr) = \bigl( \binom{n+1}{2} -
  (n+1-d)d \bigr) h^2$.
\end{example}

We next focus on the Chow ring of $\Gr(1,\PP^3)$; see \cite{05subvarieties,
  05chern}. Fix a complete flag $v_0 \in L_0 \subset H_0 \subset \PP^3$ where
the point $v_0$ lies in the line $L_0$, and the line $L_0$ is contained in the
plane $H_0$.  The Schubert varieties\index{Schubert!variety} in $\Gr(1,\PP^3)$
are the following subvarieties:
\begin{xalignat*}{2}
  \Sigma_0 &:= \Gr(1,\PP^3) \, , & 
  \Sigma_1 &:= \{ L : L \cap L_0 \neq \varnothing \} \subset \Gr(1,\PP^3) \, ,
  \\
  \Sigma_{1,1} &:= \{ L :  L \subset H_0 \} \subset \Gr(1,\PP^3) \, , &
  \Sigma_{2} &:= \{ L : v_0 \in L  \} \subset \Gr(1,\PP^3) \, , \\
  \Sigma_{2,1} &:= \{ L : v_0 \in L \subset H_0 \} \subset \Gr(1,\PP^3) \, , &
  \Sigma_{2,2} &:= \{ L_0 \} \subset \Gr(1,\PP^3) \, .
\end{xalignat*}
The corresponding classes $\sigma_I := [\Sigma_I]$, called the \emph{Schubert
  cycles}\index{Schubert!cycle}, form a basis for the Chow ring
$A^\ast(\Gr(1,\PP^3))$; see \cite[Theorem 5.26]{053264}.  Since the sum of the
subscripts gives the codimension, we have
\begin{xalignat*}{3}
  A^0 \bigl( \Gr(1, \PP^3) \bigr) &\cong \ZZ \sigma_0 \, , &
  A^1 \bigl( \Gr(1, \PP^3) \bigr) &\cong \ZZ \sigma_1 \, , &
  A^2 \bigl( \Gr(1, \PP^3) \bigr) &\cong \ZZ \sigma_{1,1} \oplus \ZZ \sigma_2
  \, ,\\
  A^3 \bigl( \Gr(1, \PP^3) \bigr) &\cong \ZZ \sigma_{2,1} \, , &
  A^4 \bigl( \Gr(1, \PP^3) \bigr) &\cong \ZZ \sigma_{2,2} \, .
\end{xalignat*}
To understand the product structure, we use the transitive action of
$\GL(4,\CC)$ on $\Gr(1,\PP^3)$.  Specifically, Kleiman's Transversality
Theorem~\cite{05kleiman} shows that, for two subvarieties $V$ and $W$ in
$\Gr(1,\PP^3)$, a general translate $U$ of $V$ under the
$\GL(4,\CC)$\nobreakdash-action is rationally equivalent to $V$ and the
intersection of $U$ and $W$ is transversal at the generic point of any
component of $U \cap W$.  Hence, we have $[V][W] = [U \cap W]$.  To determine
the product $\sigma_{1,1} \sigma_2$, we intersect general varieties
representing these classes: $\sigma_{1,1}$ consists of all lines $L$ contained
in a fixed plane $H_0$, and $\sigma_2$ is all lines $L$ containing a fixed
point $v_0$. Since a general point does not lie in a general plane, we see
that $\sigma_{1,1} \sigma_2 = 0$. Similar arguments yield all products:
\begin{xalignat*}{4} 
  \sigma_{1,1}^2 &= \sigma_{2,2} \, , &
  \sigma_2^2 &= \sigma_{2,2} \, , &
  \sigma_{1,1}\sigma_2 &= 0 \, , &
  \sigma_1 \sigma_{2,1} &= \sigma_{2,2} \, , \\
  \sigma_1 \sigma_{1,1} &= \sigma_{2,1} \, , &
  \sigma_1 \sigma_{2} &= \sigma_{2,1} \, , &
  \sigma_1^2 &=\sigma_2+\sigma_{1,1} \, .
\end{xalignat*}

The degree of a subvariety in $\Gr(1, \PP^3)$, introduced in
Sect.~\ref{05sec:grass}, can be interpreted as certain coefficients of its
class in the Chow ring\index{Chow!ring}.  Geometrically, the order $\alpha$ of
a surface $X \subset \Gr(1,\PP^3)$ is the number of lines in $X$ passing
through the general point $v_0$.  Since we may intersect $X$ with a general
variety representing $\sigma_2$, it follows that $\alpha$ equals the
coefficient of $\sigma_2$ in $[X]$.  Similarly, the class $\beta$ of $X$ is
the coefficient of $\sigma_{1,1}$ in $[X]$, the degree of a threefold
$\Sigma \subset \Gr(1,\PP^3)$ is the coefficient of $\sigma_{1}$ in
$[\Sigma]$, and the degree of a curve $C \subset \Gr(1,\PP^3)$ is the
coefficient of $\sigma_{2,1}$ in $[C]$.

The degree of a subvariety in $\Gr(1, \PP^3)$ also has a useful
reinterpretation via Chern classes of tautological vector bundles.  Let
$\mathcal{S}$ denote the tautological subbundle, the vector bundle whose fibre
over the point $W \in \Gr(1, \PP^3)$ is the $2$-dimensional vector space
$W \subseteq \CC^4$.  Similarly, let $\mathcal{Q}$ be the tautological
quotient bundle whose fibre over $W$ is $\CC^4/W$.  Both $\mathcal{S}^*$ and
$\mathcal{Q}$ are globally generated;
$H^0\bigl( \Gr(1,\PP^3), \mathcal{S}^\ast \bigr) \cong (\CC^4)^*$ and
$H^0 \bigl( \Gr(1,\PP^3), \mathcal{Q} \bigr) \cong \CC^4$; see
\cite[Proposition~0.5]{05subvarieties}.  A global section of $\mathcal{S}^*$
corresponds to a nonzero map $\varphi \colon \CC^4 \to \CC$, where its value
at the point $W$ is $\varphi|_W \colon W \to \CC$.  The Chern class
$c_2(\mathcal{S}^*)$ is represented by the vanishing locus of $\varphi$, so we
have $c_2(\mathcal{S}^*) = \sigma_{1,1} = c_2(\mathcal{S})$.  For two general
sections $\varphi, \psi \colon \CC^4 \to \CC$ of $\mathcal{S}^*$, the Chern
class\index{Chern!class} $c_1(\mathcal{S}^*)$ is represented by the locus of
points $W$ where $\varphi|_W$ and $\psi|_W$ fail to be linearly independent or
$W \cap \ker(\varphi) \cap \ker(\psi) \neq \{ 0 \}$.  Generality ensures that
$\ker(\varphi) \cap \ker(\psi)$ is a $2$-dimensional subspace of $\CC^4$, so
$c_1(\mathcal{S}^*) = - c_1(\mathcal{S}) = \sigma_1$.  Similarly, a global
section of $\mathcal{Q}$ corresponds to a point $v \in \CC^4$; its value at
$W$ is simply the image of the point in $\CC^4/W$.  Thus, $c_2(\mathcal{Q})$
is represented by the locus of those $W$ containing $v$, which is $\sigma_2$.
Two global sections of $\mathcal{Q}$ are linearly dependent at $W$ when the
$2$-dimensional subspace of $\CC^4$ spanned by the points intersects $W$
nontrivially, so $c_1(\mathcal{Q}) = \sigma_1$.  Finally, for a surface
$X \subset \Gr(1,\PP^3)$ with $[X] = \alpha \sigma_2 + \beta \sigma_{1,1}$, we
obtain
\begin{align*}
  c_2(\mathcal{Q}) \, [X] 
  &= \sigma_2 (\alpha \sigma_2 + \beta \sigma_{1,1}) = \alpha \sigma_{2,2} \,
  , \\
  c_2(\mathcal{S}) \, [X] 
  &= \sigma_{1,1} (\alpha \sigma_2 + \beta \sigma_{1,1}) = \beta \sigma_{2,2}
    \, ,
\end{align*}
so computing the bidegree\index{bidegree} is equivalent to calculating the
products $c_2(\mathcal{Q}) \, [X]$ and $c_2(\mathcal{S}) \, [X]$ in the Chow
ring.

We close this section with three examples demonstrating this approach.

\begin{example}
  Given a smooth surface $S$ in $\PP^3$, we recompute the degree of
  $\operatorname{CH}_1(S)$; compare with Proposition~\ref{05prop:hurwDeg}.
  Theorem~9 in \cite{05coisotropic} implies that this degree equals the degree
  $\delta_1(S)$ of the first polar locus
  $M_1(S) = \{ x \in S : y \in T_xS \}$, where $y$ is a general point of
  $\PP^3$ (this locus is the polar curve in the proof of
  Theorem~\ref{05thm:bidegTangent}).  Letting $T_S$ be the tangent bundle of
  $S$, Example~14.4.15 in \cite{05fulton} shows that
  $\delta_1(S) = \deg \bigl( 3h - c_1(T_S) \bigr)$.  Hence,
  Example~\ref{05tangentBundleSubvariety} gives
  $\delta_1(S) = \deg(3h-h(3+1-d))=(d-1) \deg(h)$. Since $S$ is a degree $d$
  surface, the degree of the hyperplane $h$ equals $d$, so
  $\delta_1(S) = d(d-1)$.
\end{example}

\begin{example}[{{\normalfont Problem~3 on Grassmannians
      in~\cite{05Sturmfels}}}]
  Let $S_1, S_2\subset \PP^3$ be general surfaces of degree $d_1$ and $d_2$,
  respectively, with $d_1, d_2 \geq 4$.  To find the number of lines bitangent
  to both surfaces, it suffices to compute the cardinality of
  $\operatorname{Bit}(S_1) \cap
  \operatorname{Bit}(S_2)$. Theorem~\ref{05thm:bidegTangent} establishes that,
  for all $1 \leq i \leq 2$, we have
  $[\operatorname{Bit}(S_i)] = \alpha_i \sigma_2 + \beta_i \sigma_{1,1}$ where
  $\alpha_i := \tfrac{1}{2}d_i(d_i-1)(d_i-2)(d_i-3)$ and
  $\beta_i := \tfrac{1}{2}d_i(d_i-2)(d_i-3)(d_i+3)$.  It follows that
  $[\operatorname{Bit}(S_1) \cap \operatorname{Bit}(S_2)] =
  [\operatorname{Bit}(S_1)][\operatorname{Bit}(S_2)] = (\alpha_1 \alpha_2 +
  \beta_1 \beta_2) \sigma_{2,2}$, so the number of lines bitangent to $S_1$
  and $S_2$ is
  \begin{multline*}
    \tfrac{1}{4}d_1(d_1-1)(d_1-2)(d_1-3) d_2(d_2-1)(d_2-2)(d_2-3) \\
    + \tfrac{1}{4} d_1(d_1-2)(d_1-3)(d_1+3) d_2(d_2-2)(d_2-3)(d_2+3) \, .
  \end{multline*}
\end{example}

\begin{example}
  Let $S \subset \PP^3$ be a general surface of degree $d_1$ with
  $d_1 \geq 4$, and let $C \subset \PP^3$ be a general curve of degree $d_2$
  and geometric genus $g$ with $d_2 \geq 2$. To find the number of lines
  bitangent to $S$ and secant to $C$, it suffices to compute the cardinality
  of $\operatorname{Bit}(S) \cap \operatorname{Sec}(C)$.
  Theorem~\ref{05thm:bidegTangent} and Theorem~\ref{05thm:bidegSecant} imply
  that
  \begin{align*}
    [\operatorname{Bit}(S)]
    &= \tfrac{1}{2}d_1(d_1-1)(d_1-2)(d_1-3) \, \sigma_2 +
      \tfrac{1}{2}d_1(d_1-2)(d_1-3)(d_1+3) \, \sigma_{1,1} \, , \\
    [\operatorname{Sec}(C)] 	 
    &= \bigl( \tfrac{1}{2}(d_2-1)(d_2-2)-g \bigr) \, \sigma_2
      + \tfrac{1}{2}d_2(d_2-1) \, \sigma_{1,1} \, .
  \end{align*}
  It follows that
  $[\operatorname{Bit}(S) \cap \operatorname{Sec}(C)] =
  [\operatorname{Bit}(S)][\operatorname{Sec}(C)] = \gamma \sigma_{2,2}$ where
  \begin{multline*}
    \gamma := \tfrac{1}{4} d_1(d_1-1)(d_1-2)(d_1-3) \bigl( (d_2-1)(d_2-2)-2g
    \bigr) \\
    + \tfrac{1}{4} d_1(d_1-2)(d_1-3)(d_1+3)d_2(d_2-1) \, ,
  \end{multline*}
  so the number of lines bitangent\index{bitangent} to $S$ and secant to $C$
  is $\gamma$.
\end{example}

\section{Singular Loci of Congruences}
\label{05sec:singLoci}

This section investigates the singular points of the secant, bitangent, and
inflectional congruences.  We begin with the singularities of the secant locus
of a smooth irreducible curve.
 
\begin{proposition}
  \label{05prop:bis3pts}
  Let $C$ be a nondegenerate smooth irreducible curve\index{curve!space} in
  $\PP^3$.  If $L$ is a line that intersects the curve $C$ in three or more
  distinct points, then the line $L$ corresponds to a singular point in
  $\operatorname{Sec}(C)$.
\end{proposition}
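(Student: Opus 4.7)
The plan is to realize $\operatorname{Sec}(C)$ as the image of a natural finite birational morphism from a smooth surface, and then to detect the singularity at $L$ by counting preimages via Lemma~\ref{05prop:key}. Let $C^{(2)}$ denote the symmetric square of $C$; since $C$ is a smooth curve, $C^{(2)}$ is a smooth projective surface. Define the \emph{secant map} $\phi \colon C^{(2)} \to \Gr(1,\PP^3)$ by sending a pair of distinct points $\{p,q\}$ to the line spanned by $p$ and $q$, and a doubled point $2p$ to the tangent line $T_pC$. This is a morphism; the extension across the diagonal can be made explicit in Pl\"ucker coordinates by exactly the sort of limit computation appearing in Remark~\ref{05rem:singChow}, where a family of secant lines through two moving points on $C$ limits to the tangent line as the two points coalesce. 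By the definition of the secant congruence, the image of $\phi$ is $\operatorname{Sec}(C)$.

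The next step is to verify that $\phi$ is a finite birational morphism. Any positive-dimensional fibre of $\phi$ would force some line to be contained in $C$, which is impossible since $C$ is irreducible and nondegenerate and therefore not a line; so all fibres of $\phi$ are finite, and Lemma~14.8 in \cite{05harris} upgrades this to $\phi$ being a finite morphism. For birationality, a general point of $\operatorname{Sec}(C)$ is a secant line meeting $C$ in exactly two distinct smooth points, so the general fibre of $\phi$ is a single unordered pair.

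Granting these two properties, the proposition follows immediately. The three unordered pairs $\{p_1,p_2\}$, $\{p_1,p_3\}$, $\{p_2,p_3\}$ are distinct points of $C^{(2)}$ each of whose image under $\phi$ equals $L$, so $|\phi^{-1}(L)| \geq 3$. Applying the contrapositive of Lemma~\ref{05prop:key} to the finite birational morphism $\phi$ from the smooth projective surface $C^{(2)}$ onto $\operatorname{Sec}(C)$, the fibre $\phi^{-1}(L)$ having more than one point forces $\operatorname{Sec}(C)$ to be singular at $L$. The one point requiring genuine care is the definition of $\phi$ across the diagonal and the resulting finiteness; once those are in place, the singularity is read off from the multiplicity of the fibre.
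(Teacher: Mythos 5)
Your proof is correct and follows essentially the same route as the paper: both realize $\operatorname{Sec}(C)$ as the image of the birational morphism from the smooth symmetric square $C^{(2)}$ and conclude from the fibre over $L$ having at least three points. The only cosmetic difference is that you invoke the contrapositive of Lemma~\ref{05prop:key} (after verifying finiteness), whereas the paper appeals directly to the Zariski Connectedness Theorem -- which is exactly the ingredient underlying that direction of the lemma.
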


\begin{proof}
  The symmetric square $C^{(2)}$ is the quotient of $C \times C$ by the action
  of the symmetric group $\mathfrak{S}_2$, so points in this projective
  variety are unordered pairs of points on $C$; see
  \cite[pp.~126--127]{05harris}.  The map
  $\varpi \colon C^{(2)} \to \operatorname{Sec}(C)$, defined by sending
  $\{x,y\}$ to the line spanned by the points $x$ and $y$ if $x \neq y$ or to
  the tangent line $T_x(C)$ if $x = y$, is a birational morphism.  Since
  $|L \cap C| \geq 3$, the fibre $\varpi^{-1}(L)$ is a finite set containing
  more than one element.  Hence, $\varpi^{-1}(L)$ is not connected and the
  Zariski Connectedness Theorem~\cite[Sect.~III.9.V]{05mumford}
  \index{Zariski~Connectedness~Theorem} proves that $\operatorname{Sec}(C)$ is
  singular at $L$. \qed
\end{proof}

\begin{lemma}
  \label{05lem:skewsym}
  If $f \in \CC[\![z,w]\!]$ satisfies $f(z,w)=-f(w,z)$, then the linear form
  $z-w$ divides the power series $f$.
\end{lemma}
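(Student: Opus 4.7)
My plan is to reduce the lemma to the elementary fact that a formal power series in $\CC[\![u,t]\!]$ vanishing along $\{u=0\}$ is divisible by $u$. The natural coordinate change is $u := z - w$, $t := w$, which turns the linear form $z - w$ into a variable and converts the involution $(z,w) \mapsto (w,z)$ into $(u,t) \mapsto (-u,\, u+t)$.

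First, I would verify that the substitution $z \mapsto u + t$, $w \mapsto t$ induces a well-defined ring homomorphism $\CC[\![z,w]\!] \to \CC[\![u,t]\!]$; this is immediate, since the coefficient of each monomial $u^a t^b$ in $\sum a_{ij} (u+t)^i t^j$ is the finite sum $\sum_{i=a}^{a+b} a_{i, a+b-i} \binom{i}{a}$. Setting $g(u,t) := f(u+t, t)$, the hypothesis $f(z,w) = -f(w,z)$ becomes
\[
  g(u,t) \;=\; f(u+t, t) \;=\; -f(t, u+t) \;=\; -g(-u, u+t),
\]
where the last equality uses $g(-u, u+t) = f(-u + (u+t), u+t) = f(t, u+t)$. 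Evaluating at $u = 0$ yields $g(0, t) = -g(0, t)$, hence $g(0, t) \equiv 0$. Therefore $g(u,t) = u \cdot h(u,t)$ for some $h \in \CC[\![u,t]\!]$, and substituting back gives $f(z,w) = (z-w) \, h(z-w, w)$, as required.

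There is no real obstacle here: the two slightly non-trivial points --- the validity of the formal substitution, and the factorization $g = u \cdot h$ for a power series vanishing along $\{u = 0\}$ --- are standard facts about formal power series. The only place that requires a moment of care is translating the skew-symmetry into the new coordinates, specifically recognizing that $(z,w) \mapsto (w,z)$ corresponds to $(u,t) \mapsto (-u, u+t)$ rather than to $u \mapsto -u$ alone.
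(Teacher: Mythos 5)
Your proof is correct, but it takes a different route from the paper's. The paper decomposes $f = \sum_i f_i$ into homogeneous components, notes that skew-symmetry is inherited in each degree so that $f_i(w,w) = -f_i(w,w) = 0$, and then applies the factor theorem to each polynomial $f_i$ viewed as a polynomial in one variable over $\CC[w]$; divisibility of $f$ by $z-w$ follows degree by degree. You instead perform the linear change of coordinates $z = u+t$, $w = t$, which turns the linear form $z-w$ into the coordinate $u$ and the transposition into $(u,t) \mapsto (-u, u+t)$; skew-symmetry then forces $g(0,t) = 0$, and divisibility by a coordinate is immediate from inspecting monomials. Your computation of the transformed involution is right, and the substitution is a legitimate formal operation since $u+t$ and $t$ have zero constant term. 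What your approach buys is that it works directly with the full power series, with no need to reduce to the polynomial factor theorem in each degree; what the paper's approach buys is that it avoids any change of variables and any discussion of composing formal power series, at the cost of the (easy) homogeneous decomposition. Both arguments use that the characteristic is not $2$, via $h = -h \Rightarrow h = 0$.
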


\begin{proof}
  We write the formal power series $f$ as a sum of homogeneous polynomials
  $f = \sum_{i \in \NN} f_i$. Since we have $f(z,w) + f(w,z) = 0$, it follows
  that, in each degree $i$, we have $f_i(z,w) + f_i(w,z) = 0$.  In particular,
  we see that $f_i(w, w) = 0$. If we consider $f_i(w,z)$ as a polynomial in
  the variable $z$ with coefficients in $\CC[w]$, it follows that $w$ is a
  root of $f_i$.  Thus, we conclude that $z-w$ divides $f_i$ for all
  $i \in \NN$.  \qed
\end{proof}

\begin{theorem}
  \label{05thm:bis1pt}
  Let $C$ be a nondegenerate smooth irreducible curve\index{curve!space} in
  $\PP^3$.  If a point in $\operatorname{Sec}(C)$ corresponds to a line $L$
  that intersects $C$ in a single point $x$, then the intersection
  multiplicity of $L$ and $C$ at $x$ is at least $2$. Moreover, the line $L$
  corresponds to a smooth point of $\operatorname{Sec}(C)$ if and only if the
  intersection multiplicity is exactly $2$.
\end{theorem}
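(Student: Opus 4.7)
The plan is to combine the symmetric-square parametrisation of $\operatorname{Sec}(C)$ used in the proof of Proposition~\ref{05prop:bis3pts} with Lemma~\ref{05prop:key}. Recall the morphism $\varpi \colon C^{(2)} \to \operatorname{Sec}(C)$ sending a pair $\{y,z\}$ with $y \neq z$ to the chord $\overline{yz}$ and sending a diagonal point $\{x,x\}$ to the tangent line $T_x(C)$; it is a finite, surjective, birational morphism between irreducible projective varieties.

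For the first assertion, the hypothesis $L \cap C = \{x\}$ forces the only preimage of $L$ under $\varpi$ to be the diagonal point $\{x,x\}$, because any off-diagonal pair $\{y,z\}$ would exhibit two distinct points of $C$ on $L$. Hence $L = \varpi(\{x,x\}) = T_x(C)$, and the intersection multiplicity of $L$ with $C$ at $x$ is at least $2$.

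For the smoothness criterion, I would apply Lemma~\ref{05prop:key} to $\varpi$: the fibre over $L$ is the single point $\{x,x\}$, and $C^{(2)}$ is smooth there because $C$ is smooth. Thus $L$ is a smooth point of $\operatorname{Sec}(C)$ if and only if the differential $d_{\{x,x\}}\varpi$ is injective, and everything reduces to a local computation. Choose affine coordinates so that $x = (1{:}0{:}0{:}0)$ and $T_x(C)$ is the $x_1$-axis, and parametrise $C$ near $x$ by $\gamma(t) = (t, f_2(t), f_3(t))$ with $f_2, f_3 \in \CC[\![t]\!]$ of order at least $2$. Use the symmetric functions $u = s+t$, $v = st$ as local coordinates on $C^{(2)}$ at $\{0,0\}$, and choose local coordinates $(a,b,c,d)$ on $\Gr(1,\PP^3)$ at $L$ so that the nearby line is $r \mapsto (r, a+cr, b+dr)$.

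Expressing the four coordinates of $\varpi$ as symmetric polynomials in $s,t$ and rewriting in terms of $u,v$, the leading terms of $c$ and $d$ are linear in $u$ with coefficients $\alpha$ and $\beta$ equal to the quadratic coefficients of $f_2$ and $f_3$, while $a$ and $b$ have no linear $u$-term and linear $v$-terms $-\alpha v$ and $-\beta v$. Consequently the $4 \times 2$ Jacobian at the origin has rank $2$ precisely when $(\alpha,\beta) \neq (0,0)$, which is exactly the condition $\min(\operatorname{ord}_0 f_2, \operatorname{ord}_0 f_3) = 2$; this minimum of orders equals the intersection multiplicity of $L$ and $C$ at $x$. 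The main obstacle is the Jacobian calculation itself: one must expand the explicit formulas $(f_j(t) - f_j(s))/(t-s)$ and $(t f_j(s) - s f_j(t))/(t-s)$ as polynomials in $u,v$ with enough care to confirm that higher-order terms cannot rescue the rank when both quadratic coefficients vanish.
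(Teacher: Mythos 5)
Your proposal is correct, and its core computation --- presenting $\operatorname{Sec}(C)$ near $L$ as the image of the symmetric square, taking the elementary symmetric functions $u=s+t$, $v=st$ as local coordinates at the diagonal point, and deciding smoothness by the rank of the induced map to the Grassmannian --- is exactly what the paper does, merely written in an affine chart $(a,b,c,d)$ of $\Gr(1,\PP^3)$ rather than in Pl\"ucker coordinates divided by $z-w$. The genuine difference lies in how the two directions of the equivalence are obtained. The paper uses the local computation only for ``multiplicity $2$ implies smooth'' (after normalising $\varphi_1(z)=z^2+z^3f(z)$ it concludes that $\varpi$ is a local isomorphism); for ``multiplicity $\geq 3$ implies singular'' it switches to a degeneration argument, placing $L$ in the closure of the locus of lines meeting $C$ in three points or tangentially at one of two intersection points, and then invoking Proposition~\ref{05prop:bis3pts} together with Lemma~2.3 of \cite{05arrondo}. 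You extract both directions from one Jacobian computation via Lemma~\ref{05prop:key}, which is more self-contained and avoids the external citation; the price is that you must verify the hypotheses of Lemma~\ref{05prop:key} for $\varpi$ (finiteness follows from Lemma~14.8 of \cite{05harris} since $C$ contains no line, and birationality is as in Proposition~\ref{05prop:bis3pts}). Two small points to tidy up. First, your description of the linear parts is incomplete: writing $f_2=\alpha t^2+\gamma t^3+\dotsb$ and $f_3=\beta t^2+\delta t^3+\dotsb$, the cubic coefficients contribute linear $v$-terms to $c$ and $d$, since $(t^3-s^3)/(t-s)=u^2-v$, so the linear part of $c$ is $\alpha u-\gamma v$ rather than $\alpha u$ alone. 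This does not damage the argument, because the column of $\partial/\partial u$-derivatives of $(a,b,c,d)$ at the origin is $(0,0,\alpha,\beta)$ while the $\partial/\partial v$-column is $(-\alpha,-\beta,-\gamma,-\delta)$; hence the rank is $2$ exactly when $(\alpha,\beta)\neq(0,0)$ and at most $1$ otherwise, which is the asserted dichotomy. Second, the identification of $\min(\operatorname{ord}_0 f_2,\operatorname{ord}_0 f_3)$ with the intersection multiplicity of $L$ and $C$ at $x$ deserves the one-line justification that the ideal of $L$ pulls back to $(f_2(t),f_3(t))$ along the local parametrisation.
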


We thank Jenia Tevelev for help with the following proof.

\begin{proof}
  Suppose the line $L$ intersects the curve $C$ at the point $x$ with
  multiplicity $2$. Without loss of generality, we may work in the affine open
  subset with $x_3 \neq 0$, and we assume that $x = (0 : 0 : 0 : 1)$ and
  $L = \variety(x_1, x_2)$.  Since $C$ is smooth, there is a local analytic
  isomorphism $\varphi$ from a neighbourhood of the origin in $\A^1$ to a
  neighbourhood of the point $x$ in $C$.  The map $\varphi$ will have the form
  $\varphi(z) = \bigl( \varphi_0(z), \varphi_1(z), \varphi_2(z) \bigr)$ for
  some $\varphi_0, \varphi_1, \varphi_2 \in \CC[\![ z ]\!]$.  We have
  $\varphi_0'(0) \neq 0$ and $\varphi_1'(0) = \varphi_2'(0) = 0$ because $L$
  is the tangent to the curve $C$ at $x$.  After making an analytic change of
  coordinates, we may assume that
  $\varphi(z) = \bigl( z, \varphi_1(z), \varphi_2(z) \bigr)$.  As $L$ is a
  simple tangent, at least one of $\varphi_1$ and $\varphi_2$ must vanish at
  $0$ with order exactly $2$.  Hence, we may assume that
  $\varphi_1(z) = z^2 + z^3 f(z)$ and $\varphi_2(z) = z^2 g(z)$ for some
  $f,g \in \CC[\![ z ]\!]$.  The line spanned by the distinct points
  $\varphi(z)$ and $\varphi(w)$ on the curve $C$ is given by the row space of
  the matrix
  \[
    \begin{bmatrix}
      z & z^2 + z^3 f(z) & z^2 g(z) & 1 \\
      w & w^2 + w^3 f(w) & w^2 g(w) & 1 \\      
    \end{bmatrix} \, .
  \]
  The Pl\"ucker coordinates\index{Plucker@Pl\"ucker!coordinates} are
  skew-symmetric power series, so Lemma~\ref{05lem:skewsym} implies that they
  are divisible by $z-w$.  In particular, if $f(z) = \sum_i a_i z^i$, then we
  have $p_{0,3} = z-w$,
  \begin{align*}
    p_{0,1} 
    &= z \bigl( w^2 + w^3 f(w) \bigr) - w \bigl( z^2 + z^3 f(z) \bigr)
      = -zw(z-w) \Bigl( 1 + \textstyle\sum\limits_i a_i
      \textstyle\sum\limits_{j=0}^{i+1} w^j z^{i+1-j} \Bigr) \, , \\
    p_{1,3}
    &= z^2 + z^3 f(z) - w^2 - w^3 f(w) = (z-w) \Bigl( z + w +
      \textstyle\sum\limits_i a_i \textstyle\sum\limits_{j=0}^{i+2} z^j
      w^{i+2-j} \Bigr) \, .
  \end{align*}
  The symmetric square $(\A^1)^{(2)}$ of the affine line $\A^1$ is a smooth
  surface isomorphic to the affine plane $\A^2$;
  see~\cite[Example~10.23]{05harris}.  Consider the map
  $\varpi \colon (\A^1)^{(2)} \to \operatorname{Sec}(C)$ defined by sending
  the pair $\{ z, w\}$ of points in $\A^1$ to the line spanned by the points
  $\varphi(z)$ and $\varphi(w)$ if $z \neq w$ or to the tangent line of $C$ at
  $\varphi(z)$ if $z = w$.  In other words, the map $\varpi$ sends
  $\{ z, w \}$ to
  $\left( -zw + h_1(z,w) : \tfrac{p_{0,2}}{z-w} : 1 : \tfrac{p_{1,2}}{z-w} : z +
    w + h_2(z,w) : \tfrac{p_{2,3}}{z-w} \right)$ where
  \begin{xalignat*}{3}
    h_1(z,w) &:= -zw \textstyle\sum\limits_i a_i
    \textstyle\sum\limits_{j=0}^{i+1} w^j z^{i-j+1} & & \text{and} & h_2(z,w)
    &:= \textstyle\sum\limits_i a_i \textstyle\sum\limits_{j=0}^{i+2} z^j
    w^{i+2-j} \, .
  \end{xalignat*}
  Since the forms $zw$ and $z+w$ are local coordinates of $(\A^1)^{(2)}$ in a
  neighbourhood of the origin, we conclude that $\varpi$ is a local
  isomorphism and $\operatorname{Sec}(C)$ is smooth at the point corresponding
  to $L$.

  Suppose the line $L$ intersects the curve $C$ at the point $x$ with
  multiplicity at least $3$.  It follows that the line $L$ is contained in the
  Zariski closure of the set of lines that intersect $C$ in at least three
  points or that intersect $C$ in two points, one with multiplicity at least
  $2$.  By Proposition~\ref{05prop:bis3pts} and Lemma~2.3 in \cite{05arrondo},
  we conclude that the line is singular in $\operatorname{Sec}(C)$.  \qed
\end{proof}

\begin{corollary}
  \label{05cor}
  Let $C$ be a nondegenerate smooth irreducible curve\index{curve!space} in
  $\PP^3$. If the line $L$ corresponds to a point in $\operatorname{Sec}(C)$,
  then $L$ corresponds to a singular point of $\operatorname{Sec}(C)$ if and
  only if one of the following three conditions is satisfied: 
  \begin{itemize}
  \item the line $L$ intersects the curve $C$ in $3$ or more distinct points,
  \item the line $L$ intersects the curve $C$ in exactly $2$ points and $L$ is
    the tangent line to one of these two points,
  \item the line $L$ intersects the curve $C$ at a single point with
    multiplicity at least $3$.  
  \end{itemize}
\end{corollary}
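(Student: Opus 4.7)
Since $L \in \operatorname{Sec}(C)$ is a secant line, it must meet $C$ with total intersection multiplicity at least $2$. I would organize the proof as a case analysis on $|L \cap C|$, collecting results already established in this section.

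First, if $L$ meets $C$ in three or more distinct points, Proposition~\ref{05prop:bis3pts} already concludes that $L$ is singular in $\operatorname{Sec}(C)$, which handles the first bullet. Second, if $L$ meets $C$ at a single point $x$, then Theorem~\ref{05thm:bis1pt} shows that the intersection multiplicity at $x$ is at least $2$, and that $L$ is smooth in $\operatorname{Sec}(C)$ precisely when this multiplicity equals $2$; hence $L$ is singular if and only if the multiplicity is at least $3$, which handles the third bullet.

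It remains to settle the middle case, where $L$ meets $C$ in exactly two distinct points $x$ and $y$, and to show that $L$ is singular in $\operatorname{Sec}(C)$ if and only if $L$ equals $T_x(C)$ or $T_y(C)$. My approach is to reuse the parametrization $\varpi \colon C^{(2)} \to \operatorname{Sec}(C)$ from the proof of Proposition~\ref{05prop:bis3pts}. This map is birational, finite, and surjective. Since $x \neq y$, the fibre $\varpi^{-1}(L)$ is the single point $\{x,y\}$, and $C^{(2)}$ is smooth there with local coordinates $(z,w)$ inherited directly from local parameters on the two distinct branches of $C$ at $x$ and $y$ (no symmetrization is needed because $x \neq y$). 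By Lemma~\ref{05prop:key}, it suffices to determine whether the differential $d_{\{x,y\}} \varpi$ is injective. If $L$ is transversal at both $x$ and $y$, an explicit Pl\"ucker-coordinate computation---analogous to, but much simpler than, the one in the proof of Theorem~\ref{05thm:bis1pt}---shows that the $6 \times 2$ Jacobian of $\varpi$ at $\{x,y\}$ has full rank $2$, so $\varpi$ is a local isomorphism and $L$ is smooth. If instead $L = T_x(C)$, then $L$ is contained in the Zariski closure of the locus of lines meeting $C$ at two points one of which has intersection multiplicity at least $2$; singularity then follows by invoking Lemma~2.3 in \cite{05arrondo}, exactly as at the end of the proof of Theorem~\ref{05thm:bis1pt}.

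The main obstacle is the explicit rank analysis of $d_{\{x,y\}}\varpi$ in the two-point case, but the key simplification is that $x \neq y$: the Jacobian splits as a block matrix built from the tangent directions of $C$ at $x$ and at $y$ together with the direction of $L$, and its rank is easily seen to equal $2$ precisely when $L$ is transversal at both points. Combining the three cases yields the trichotomy in the statement.
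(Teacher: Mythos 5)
Your proposal is correct, and it diverges from the paper in an interesting way on the middle case. The paper's entire proof is a one-line combination of citations: Proposition~\ref{05prop:bis3pts} for lines meeting $C$ in three or more points, Theorem~\ref{05thm:bis1pt} for lines meeting $C$ in a single point, and Lemma~2.3 of \cite{05arrondo} for the two-point case in full. You use the same two internal results for the first and third bullets, but for the two-point case you replace most of the external citation with a direct argument: apply Lemma~\ref{05prop:key} to $\varpi \colon C^{(2)} \to \operatorname{Sec}(C)$ at the single-point fibre $\{x,y\}$ and check injectivity of the differential. This is sound, and your kernel analysis is right: a tangent vector $(u,v) \in T_x(C) \oplus T_y(C)$ is killed by the differential of the join map exactly when $u$ and $v$ both point along $L$, so the kernel is nontrivial precisely when $L = T_x(C)$ or $L = T_y(C)$. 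In fact, your setup is stronger than you use it for: since Lemma~\ref{05prop:key} is an equivalence, the non-injectivity of $d_{\{x,y\}}\varpi$ when $L$ is tangent at one of the two points already forces $\operatorname{Sec}(C)$ to be singular at $L$, so your residual appeal to Lemma~2.3 of \cite{05arrondo} for that direction is unnecessary; dropping it would make your argument entirely self-contained where the paper's is not. The only cost is that you owe the reader the (easy but unwritten) rank computation, and a minor bookkeeping point: in an affine Pl\"ucker chart the Jacobian is $5 \times 2$ rather than $6 \times 2$, though this does not affect the conclusion.
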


\begin{proof}
  Combine Proposition~\ref{05prop:bis3pts}, Lemma~2.3 in \cite{05arrondo}, and
  Theorem~\ref{05thm:bis1pt}. \qed
\end{proof}


Analogously, we want to describe the singularities of the inflectional locus
$\operatorname{Infl}(S)$ and the bitangent locus $\operatorname{Bit}(S)$ of a
surface $S \subset \PP^3$.

\begin{theorem}
  \label{05thm:singInfl}
  If $S \subset \PP^3$ is an irreducible smooth surface of degree at least $4$
  which does not contain any lines, then the singular locus of
  $\operatorname{Infl}(S)$ corresponds to lines which either intersect $S$
  with multiplicity at least $3$ at two or more distinct points, or
  intersect $S$ with multiplicity at least $4$ at some point.
\end{theorem}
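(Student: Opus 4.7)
The plan is to parallel the arguments for Theorems~\ref{05thm:singChow} and~\ref{05thm:singHurwitz}. I introduce the incidence variety
\[
  \Phi := \overline{\bigl\{ (v, L) \in S \times \Gr(1, \PP^3) : v \in L,\; L \text{ meets } S \text{ at } v \text{ with multiplicity at least } 3 \bigr\}}\, .
\]
Working in the affine chart from the proof of Theorem~\ref{05thm:singHurwitz}, and writing $u := (1, c, d)$ for the direction of $L$ with $g_k := D_u^k g_0$ its iterated directional derivatives, the subvariety $\Phi$ is locally cut out by $g_0 = g_1 = g_2 = 0$ together with $a = \beta - \alpha c$ and $b = \gamma - \alpha d$. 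The projection $\pi \colon \Phi \to \operatorname{Infl}(S)$ sending $(v, L)$ to $L$ is finite, since $S$ contains no lines, and birational, since a generic inflectional tangent has a unique inflection point on $S$. Thus Lemma~\ref{05prop:key} applies, so $\operatorname{Infl}(S)$ is smooth at $L$ if and only if $\pi^{-1}(L) = \{(v, L)\}$, the incidence variety $\Phi$ is smooth at $(v, L)$, and the differential $d_{(v,L)} \pi$ is injective.

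Rather than row-reduce the resulting $5 \times 7$ Jacobian of the defining equations of $\Phi$ directly, I would reinterpret $\Phi$ as the zero locus of $g_2$ inside the smooth $3$-dimensional $\PP^1$-bundle of tangent lines to $S$, cut out locally by $g_0 = g_1 = 0$ in the coordinates $(\alpha, \beta, \gamma, c, d)$. Since $u$ lies in $T_v(S)$ on $\Phi$, the displacement $(\dot v, \dot c, \dot d) = (u, 0, 0)$ is tangent to this bundle at $(v, L)$, and the directional derivative of $g_2$ along it equals $D_u g_2 = g_3(v)$. Consequently $\Phi$ is a smooth hypersurface of this bundle at $(v, L)$ whenever $g_3(v) \neq 0$, i.e., whenever the intersection multiplicity at $v$ is exactly $3$. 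A calculation parallel to the one concluding the proof of Theorem~\ref{05thm:singHurwitz} then identifies the kernel of $d_{(v, L)} \pi$ as the span of $\lambda (1, c, d, 0, 0, 0, 0)$ subject to $\lambda\, g_3(v) = 0$, so $d\pi$ is injective if and only if $g_3(v) \neq 0$ as well.

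Applying Lemma~\ref{05prop:key} together with these two observations, $\operatorname{Infl}(S)$ is smooth at $L$ exactly when $\pi^{-1}(L)$ is a single point $(v, L)$ and the multiplicity of $L \cap S$ at $v$ equals $3$. The failure of the first clause corresponds to $L$ being inflectional at two or more distinct points of $S$, and the failure of the second to $L$ meeting $S$ with multiplicity at least $4$ at some point; this delivers the asserted description of $\operatorname{Sing}(\operatorname{Infl}(S))$. The main obstacle is recognizing $g_3(v)$ as the single quantity simultaneously controlling both the smoothness of $\Phi$ and the injectivity of $d\pi$; the $\PP^1$-bundle reformulation isolates this fact cleanly and avoids a lengthy rank analysis of the $5 \times 7$ Jacobian.
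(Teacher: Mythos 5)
Your proposal is correct and follows essentially the same route as the paper: the same incidence variety cut out locally by $g_0=g_1=g_2=0$, the same application of Lemma~\ref{05prop:key} after establishing finiteness and birationality of the projection, and the same identification of $g_3(v)$ as the single quantity governing both the smoothness of the incidence variety and the injectivity of $d_{(v,L)}\pi$, with the fibre cardinality accounting for lines inflectional at two or more points. The only difference is cosmetic: where the paper checks the rank of the $5\times 7$ Jacobian directly, you realize the incidence variety as the hypersurface cut out by $g_2$ inside the smooth threefold $\Phi_S$ from Theorem~\ref{05thm:singHurwitz} and evaluate the differential of $g_2$ on the tangent vector $(u,0,0)$ --- a tidy repackaging of the same computation (noting that this vector is tangent to $\Phi_S$ at points of your $\Phi$ because $g_2$ vanishes there, not merely because $u\in T_v(S)$).
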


\begin{proof}
  We consider the incidence variety
  \[
    \Psi_S := \overline{\{ (x,L) : \text{$L$ intersects $S$ at $x$ with
        multiplicity $3$} \} } \subset S \times \Gr(1, \PP^3) \, .
  \]
  The projection $\pi \colon \Psi_S \rightarrow \operatorname{Infl}(S)$,
  defined by $(x,L) \mapsto L$, is a surjective morphism.  Since $S$ does not
  contain any lines, all fibres of $\pi$ are finite and Lemma~14.8 in
  \cite{05harris} implies that the map $\pi$ is finite. Moreover, the general
  fibre of $\pi$ has cardinality one, so $\pi$ is birational.  To apply
  Lemma~\ref{05prop:key}, we need to examine the singularities of $\Psi_S$ and
  the differential of $\pi$.

  Let $f \in \CC[x_0,x_1,x_2,x_3]$ be the defining equation for $S$ in
  $\PP^3$.  Consider the affine chart in $\PP^3 \times \Gr(1, \PP^3)$ where
  $x_0 \neq 0$ and $p_{0,1} \neq 0$.  We may assume
  $x = (1 : \alpha : \beta : \gamma)$ and the line $L$ is spanned by the
  points $(1 : 0 : a : b)$ and $(0 : 1 : c : d)$.  In this affine chart, $S$
  is defined by $g_0(x_1,x_2,x_3) := f(1,x_1,x_2,x_3)$.  As in the proof of
  Theorem~\ref{05thm:singChow}, we have $x \in L$ if and only if
  $a = \beta - \alpha c$ and $b = \gamma - \alpha d$.  Parametrizing the line
  $L$ by $\ell(s,t) := (s : s \alpha + t : s \beta + tc : s \gamma + t d)$ for
  $(s : t) \in \PP^1$ shows that $L$ intersects $S$ with multiplicity at least
  $m$ at $x$ if and only if $f \bigl( \ell(s, t) \bigr)$ is divisible by
  $t^m$.  This is equivalent to
  \[
    \tfrac{\partial}{\partial t} \bigl[ f\bigl( \ell(s,t) \bigr) \bigr]
    \Big|_{(1,0)} = \tfrac{\partial^2}{\partial t^2} \bigl[ f\bigl(
    \ell(s,t) \bigr) \bigr] \Big|_{(1,0)} = \dotsb =
    \tfrac{\partial^{m-1}}{\partial t^{m-1}} \bigl[ f\bigl( \ell(s,t) \bigr)
    \bigr] \Big|_{(1,0)} = 0 \, .
  \]  
  Setting
  $g_k := \bigl[\frac{\partial}{\partial x_1} + c \frac{\partial}{\partial
    x_2} + d \frac{\partial}{\partial x_3} \bigr]^k g_0$ for $k \geq 1$, the
  incidence variety $\Psi_S$ can be written on the chosen affine chart as
  \[
    \bigl\{ (\alpha, \beta, \gamma, a, b, c, d) :
    \text{$g_k(\alpha, \beta, \gamma) = 0$ for $0 \leq k \leq 2$,
      $a = \beta - \alpha c$, $b = \gamma - \alpha d$} \bigr\} \, .
  \]
  As $\dim \Psi_S = 2$, it is smooth at the point $(x,L)$ if and only if its
  tangent space has dimension $2$ or, equivalently, its Jacobian matrix 
  \[
    \begin{bmatrix}
      \frac{\partial g_0}{\partial x_1}(\alpha, \beta, \gamma) &
      \frac{\partial g_0}{\partial x_2}(\alpha, \beta, \gamma) &
      \frac{\partial g_0}{\partial x_3}(\alpha, \beta, \gamma) & 0 & 0 & 0 & 0 \\
      \frac{\partial g_1}{\partial x_1}(\alpha, \beta, \gamma) &
      \frac{\partial g_1}{\partial x_2}(\alpha, \beta, \gamma) &
      \frac{\partial g_1}{\partial x_3}(\alpha, \beta, \gamma) & 0 & 0 &
      \frac{\partial g_0}{\partial x_2}(\alpha, \beta, \gamma) &
      \frac{\partial g_0}{\partial x_3}(\alpha, \beta, \gamma) \\
      \frac{\partial g_2}{\partial x_1}(\alpha, \beta, \gamma) &
      \frac{\partial g_2}{\partial x_2}(\alpha, \beta, \gamma) &
      \frac{\partial g_2}{\partial x_3}(\alpha, \beta, \gamma) & 0 & 0 &
      2 \frac{\partial g_1}{\partial x_2}(\alpha, \beta, \gamma) & 
      2 \frac{\partial g_1}{\partial x_3}(\alpha, \beta, \gamma) \\
      -c & 1 & 0 & -1 & 0 & -\alpha & 0 \\
      -d & 0 & 1 & 0 & -1 & 0 & -\alpha
    \end{bmatrix}
  \]
  has rank five. Since $S$ is smooth, the first $2$ and the last $2$ rows of
  the Jacobian matrix are linearly independent.  If $\Psi_S$ is singular at
  $(x, L)$, then the third row is a linear combination of the others;
  specifically, there exist scalars $\lambda, \mu \in \CC$ such that
  $\frac{\partial g_2}{\partial x_j}(\alpha, \beta, \gamma) = \lambda
  \frac{\partial g_1}{\partial x_j}(\alpha, \beta, \gamma) + \mu
  \frac{\partial g_0}{\partial x_j}(\alpha, \beta, \gamma)$ for
  $1 \leq j \leq 3$.  It follows that
  $g_3(\alpha, \beta, \gamma) = \lambda g_2(\alpha, \beta, \gamma) + \mu
  g_1(\alpha, \beta, \gamma) = 0$.  Thus, the line $L$ intersects the
  surface $S$ at the point $x$ with multiplicity at least $4$ if
  $\Psi_S$ is singular at $(x, L)$.

  It remains to show that the differential
  $d_{(x,L)} \pi \colon T_{(x,L)}(\Psi_S) \to T_L\bigl(\operatorname{Infl}(S)
  \bigr)$ is not injective if and only if the line $L$ intersects the surface
  $S$ at the point $x$ with multiplicity at least $4$.  The differential
  $d_{(x,L)} \pi$ sends every element in the kernel of the Jacobian matrix to
  its last four coordinates.  This map is not injective if and only if the
  kernel contains an element of the form $\begin{bmatrix} 
    * & * & * & 0 & 0 & 0 & 0
  \end{bmatrix}^{\transpose} \neq 0$.  Such an element belongs to the kernel
  if and only if it equals $\begin{bmatrix}
    \lambda & c \lambda & d \lambda & 0 & 0 & 0 & 0
  \end{bmatrix}^{\transpose}$ for some $\lambda \in \CC \setminus \{ 0 \}$ and
  $g_1(\alpha, \beta, \gamma) = g_2(\alpha, \beta, \gamma) = g_3(\alpha,
  \beta, \gamma) = 0$. This shows that the line $L$ intersects the surface $S$
  at the point $x$ with multiplicity at least $4$ if and only if $d_{(x,L)}$
  is not injective.

  Finally, the fibre $\pi^{-1}(L)$ consists of more than one point if and only
  if $L$ intersects $S$ with multiplicity at least $3$ at two or more distinct
  points, so Lemma~\ref{05prop:key} completes the proof.  \qed
\end{proof}

\subruninhead{Proof of Theorem~\ref{05mainthm}.}  The first part related to
the curve $C$ is an amalgamation of Theorem~\ref{05thm:singChow},
Theorem~\ref{05thm:bidegSecant}, Theorem~\ref{05thm:bis1pt}, and
Corollary~\ref{05cor}.  Similarly, the second part related to the surface $S$
is an amalgamation of Theorem~\ref{05thm:singHurwitz},
Theorem~\ref{05thm:bidegTangent}, and Theorem~\ref{05thm:singInfl}. \qed

\begin{proposition}
  Let $S \subset \PP^3$ be a general irreducible surface of degree at least
  $4$.  If $L$ is a line that is tangent to $S$ at three or more distinct
  points, then the line $L$ corresponds to a singular point of
  $\operatorname{Bit}(S)$.
\end{proposition}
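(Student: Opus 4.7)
The plan is to imitate the argument used for Proposition~\ref{05prop:bis3pts}, replacing the symmetric square of a curve by an incidence variety that records unordered pairs of tangency data on $S$. Concretely, set
\[
  \Phi_S := \overline{\bigl\{ (\{x,y\}, L) \in S^{(2)} \times \Gr(1,\PP^3) : x \neq y, \ x,y \in L \subset T_x(S) \cap T_y(S) \bigr\}}
\]
and consider the forgetful projection $\varpi \colon \Phi_S \to \operatorname{Bit}(S)$ given by $(\{x,y\},L) \mapsto L$. By the very definition of $\operatorname{Bit}(S)$, the map $\varpi$ is surjective; since a general bitangent of a general surface is tangent at exactly two smooth points, $\varpi$ is also birational.

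The next step is to verify that $\varpi$ is finite. Because $S$ is general of degree $d \geq 4$, it contains no line (see \cite{05vdWLines}); hence for every $L \in \operatorname{Bit}(S)$ the intersection $L \cap S$ is a finite set and a fortiori $\varpi^{-1}(L)$ is finite. Lemma~14.8 in \cite{05harris} then upgrades finite fibres to finiteness of $\varpi$. With $\varpi$ a birational finite surjective morphism in hand, Lemma~\ref{05prop:key} provides the desired obstruction to smoothness: if $\operatorname{Bit}(S)$ is smooth at $L$, then $\varpi^{-1}(L)$ must consist of a single point.

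Now suppose $L$ is tangent to $S$ at $k \geq 3$ distinct smooth points $x_1, x_2, \dotsc, x_k$. Then $\varpi^{-1}(L)$ contains the $\binom{k}{2} \geq 3$ pairwise distinct unordered pairs $\{x_i,x_j\}$ with $1 \leq i < j \leq k$, so $|\varpi^{-1}(L)| > 1$, and Lemma~\ref{05prop:key} forces $L$ to be a singular point of $\operatorname{Bit}(S)$.

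The main obstacle I anticipate is justifying that $\Phi_S$ is an irreducible projective variety so that Lemma~\ref{05prop:key} applies cleanly; irreducibility of $\operatorname{Bit}(S)$ for general $S$ is standard, and one expects $\Phi_S$ to be the unique component dominating it under $\varpi$, but this needs a brief dimension count on the incidence locus, analogous to Remark~\ref{05rem:singChow}. Should this cause difficulty, one can bypass Lemma~\ref{05prop:key} altogether and invoke the Zariski Connectedness Theorem~\cite[\S III.9.V]{05mumford} at the point $L$: if $\operatorname{Bit}(S)$ were smooth, hence normal, at $L$, then the finite fibre $\varpi^{-1}(L)$ would have to be connected, contradicting the count above.
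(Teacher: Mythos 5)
Your proposal is correct and follows essentially the same route as the paper: the same incidence variety in $S^{(2)} \times \Gr(1,\PP^3)$, the same birational projection $\varpi$, and the conclusion that a fibre with $\binom{k}{2} \geq 3$ points obstructs smoothness. Your ``fallback'' via the Zariski Connectedness Theorem is in fact exactly the paper's argument (and is what underlies the forward direction of Lemma~\ref{05prop:key} anyway), so the irreducibility worry you raise is harmlessly sidestepped.
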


\begin{proof}
  As in the proof of Proposition~\ref{05prop:bis3pts}, the symmetric square
  $S^{(2)}$ is the quotient of $S \times S$ by the action of the symmetric
  group $\mathfrak{S}_2$.  The projection $\varpi$ from
  \[
    \overline{ \big\{ ( \{x,y\}, L) : \text{$x \neq y$, $x, y \in L \subset
        T_x(S) \cap T_y(S)$} \big\}} \subset S^{(2)} \times \Gr(1,\PP^3)
  \]
  onto $\operatorname{Bit}(S)$, defined by sending the pair
  $(\{ x, y \},L) \mapsto L$ is a birational morphism.  The fibre
  $\varpi^{-1}(L)$ is a finite set containing more than one element if $L$ is
  tangent to $S$ in at least three distinct points.  Hence, $\varpi^{-1}(L)$
  is not connected and the Zariski Connectedness
  Theorem~\cite[Sect.~III.9.V]{05mumford}
  \index{Zariski~Connectedness~Theorem} proves that $\operatorname{Bit}(S)$ is
  singular at $L$. \qed
\end{proof}


We do not yet have a full understanding of points in $\operatorname{Bit}(S)$
for which the corresponding lines have an intersection multiplicity greater
than $4$ at a point of $S$.  We know that a line $L$ that is tangent to the
surface $S$ at exactly two points corresponds to a smooth point in
$\operatorname{Bit}(S)$ if and only if the intersection multiplicity of $L$
and $S$ at both points is exactly $2$.  Moreover, given a line $L$ that is
tangent to $S$ at a single point, the intersection multiplicity of $L$ and $S$
at this point is at least $4$, and the line $L$ corresponds to a smooth point
of $\operatorname{Bit}(S)$ when the multiplicity is exactly four; see
\cite[Lemma~4.3]{05arrondo}.  To complete this picture, we make the following
prediction.

\begin{conjecture}
  Let $S \subset \PP^3$ be a general irreducible surface of degree at least
  $4$.  If a point in the bitangent congruence $\operatorname{Bit}(S)$
  \index{bitangent} corresponds to a line $L$ that is tangent to $S$ at a
  single point $x$ such that the intersection multiplicity of $L$ and $S$ at
  $x$ is at least $5$, then $L$ corresponds to a singular point of
  $\operatorname{Bit}(S)$.
\end{conjecture}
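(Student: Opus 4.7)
The plan is to follow the template of Theorems~\ref{05thm:singHurwitz}, \ref{05thm:singInfl}, and Proposition~\ref{05prop:bis3pts}: construct a birational resolution of $\operatorname{Bit}(S)$ and apply Lemma~\ref{05prop:key}. The natural resolution is the incidence variety
\[
  \tilde{B} := \overline{ \bigl\{ (\{p,q\}, L) \in S^{(2)} \times \Gr(1, \PP^3) : p \neq q,\; p, q \in L \subset T_p(S) \cap T_q(S) \bigr\} }.
\]
Since $S$ is a general surface of degree $\geq 4$, it contains no lines, so the projection $\varpi \colon \tilde{B} \to \operatorname{Bit}(S)$ is finite; a general bitangent meets $S$ in exactly two distinct tangent points, so $\varpi$ is also birational. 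By Lemma~\ref{05prop:key}, the variety $\operatorname{Bit}(S)$ is singular at $L$ as soon as $\varpi^{-1}(L)$ contains more than one point, or $\tilde{B}$ is singular at some preimage, or the differential $d\varpi$ fails to be injective at some preimage of $L$.

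First I would show that $(\{x,x\}, L) \in \tilde{B}$ by exhibiting an analytic one-parameter family of distinct tangent pairs $(p_t, q_t)$ with $p_t, q_t \to x$ whose chords $L_t$ tend to $L$; this parallels the deformation analysis of Remark~\ref{05rem:singChow}, and the family exists because $L$ has contact at least $3$ at $x$, hence is an asymptotic tangent at $x$. For the local analysis, place $x$ at the origin of an affine chart with $T_x(S) = \{x_2 = 0\}$ and $L$ equal to the $x_0$-axis, and write $S$ locally as $x_2 = \phi(x_0, x_1)$ where $\phi(x_0, 0) = a_m x_0^m + O(x_0^{m+1})$ with $a_m \neq 0$ and $m \geq 5$. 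Parametrize nearby pairs of points on $S$ by $(u_0, u_1, v_0, v_1)$ and descend to the symmetric-square coordinates $e_1 := u_0 + v_0$, $e_2 := u_0 v_0$, $f_1 := u_1 + v_1$, $f_2 := u_1 v_1$ on the smooth variety $S^{(2)}$.

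The two bitangency conditions (chord contained in $T_p(S)$ and in $T_q(S)$) are skew-symmetric in $(p,q)$, so after dividing by the appropriate alternating factors they descend to functions on $S^{(2)}$; in the $(e_\bullet, f_\bullet)$ coordinates their leading Taylor terms are controlled by the second fundamental form of $S$ and, along the $x_0$-axis, by the successive coefficients $a_i$ of $\phi(x_0, 0)$. Simultaneously, the Plücker coordinates of the chord descend to power series in $(e_1, e_2, f_1, f_2)$ by the same skew-symmetric division trick used in the proof of Theorem~\ref{05thm:bis1pt}. These two pieces of data together determine the equations of $\tilde{B}$ near $(\{x,x\}, L)$ and the local expression of $\varpi$.

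The main obstacle will be pushing this Taylor expansion far enough to read off the rank of the Jacobian at $(\{x,x\}, L)$. The expectation is that for $m = 4$ the system is non-degenerate and one recovers Arrondo's Lemma~4.3 in~\cite{05arrondo}, whereas for $m \geq 5$ the additional vanishing $a_4 = 0$ forces a coefficient in the leading expansion of either a defining equation of $\tilde{B}$ or of $\varpi$ to drop, so that either $\tilde{B}$ acquires a singularity at $(\{x,x\}, L)$ or the kernel of $d\varpi$ gains a tangent vector in the symmetric-square direction. Either alternative yields the singularity of $\operatorname{Bit}(S)$ at $L$ through Lemma~\ref{05prop:key}. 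If the direct Jacobian calculation proves unwieldy, a complementary route is to use the duality $\operatorname{Bit}(S)^\perp = \operatorname{Bit}(S^\vee)$ from the final remark of Section~\ref{05sec:dual}: lines with contact $\geq 5$ at a single point of $S$ should correspond on the dual side to bitangents of $S^\vee$ meeting its singular locus in a more degenerate way than through an ordinary node, reducing the claim to a local singularity analysis on $S^\vee$.
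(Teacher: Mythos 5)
The statement you are addressing is presented in the paper as a \emph{conjecture}: the authors explicitly offer no proof, and your proposal does not supply one either. Your setup --- resolving $\operatorname{Bit}(S)$ by the incidence variety $\tilde{B}$ over the symmetric square $S^{(2)}$, checking that $(\{x,x\},L)$ lies in $\tilde{B}$, and applying Lemma~\ref{05prop:key} --- is a sensible plan and matches the strategy the authors use for $\operatorname{Sec}(C)$ in Theorem~\ref{05thm:bis1pt} and for $\operatorname{Infl}(S)$ in Theorem~\ref{05thm:singInfl}. But the entire content of the conjecture is concentrated in the step you defer: you must actually show that when the contact order $m$ at $x$ is at least $5$ (equivalently $a_4=0$ in your normal form), either $\tilde{B}$ is singular at $(\{x,x\},L)$ or the differential $d\varpi$ has nontrivial kernel there. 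You state this as an ``expectation'' that ``a coefficient \ldots drops,'' which is precisely the assertion to be proved, not an argument for it. Note also that Lemma~\ref{05prop:key} is an if-and-only-if, so it is not enough to observe that the nondegenerate $m=4$ computation (Arrondo's Lemma~4.3) ceases to apply verbatim; you must verify that the degeneration actually lands in one of the three failure modes of the lemma rather than merely changing which minors of the Jacobian are nonzero. The analogous argument for $\operatorname{Sec}(C)$ in Theorem~\ref{05thm:bis1pt} required an explicit division of the skew-symmetric Pl\"ucker coordinates by $z-w$ and the identification of $zw$, $z+w$ as local coordinates on $(\A^1)^{(2)}$ before the local isomorphism could be read off; here the base is two-dimensional, there are two tangency conditions to descend, and no such computation is carried out. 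There is also a preliminary point you flag but do not settle: that $(\{x,x\},L)$ is in the closure $\tilde{B}$ at all, i.e.\ that $L$ is a limit of honest bitangent chords, needs its own deformation argument in the spirit of Remark~\ref{05rem:singChow}.

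Your fallback via duality is likewise not a proof: the identity $\operatorname{Bit}(S)^\perp=\operatorname{Bit}(S^\vee)$ is cited in the paper only for surfaces with mild singularities, and the claim that contact of order at least $5$ ``should correspond'' to a more degenerate incidence with the singular locus of $S^\vee$ is again an unverified restatement of the conjecture on the dual side, not a reduction to something known.
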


\begin{acknowledgement}
  This article was initiated during the Apprenticeship Weeks (22
  August--2~September 2016), led by Bernd Sturmfels, as part of the
  Combinatorial Algebraic Geometry Semester at the Fields Institute.  We thank
  Daniele Agostini, Enrique Arrondo, Peter B\"urgisser, Diane Maclagan, Emilia
  Mezzetti, Ragni Piene, Jenia Tevelev, and the anonymous referees for helpful
  discussions, suggestions and hints.  Kathl\'en Kohn was supported by a
  Fellowship from the Einstein Foundation Berlin, Bernt Ivar Utst\o{}l
  N\o{}dland was supported by NRC project 144013, and Paolo Tripoli was
  supported by EPSRC grant EP/L505110/1.
\end{acknowledgement}

\label{05:end}


\begin{thebibliography}{99.}
\bibitem{05subvarieties} Enrique Arrondo: Subvarieties of Grassmannians,
  \emph{Lecture Note Series}, Dipartimento di Matematica Univ. Trento
  \textbf{10} (1996) \url{www.mat.ucm.es/~arrondo/trento.pdf}.
	
\bibitem{05arrondo} Enrique Arrondo, Marina Bertolini, and Cristina Turrini: A
  focus on focal surfaces, \emph{Asian J. Math.} \textbf{5} (2001) 535–560.
  
\bibitem{05catanese} Fabrizio Catanese: Cayley forms and self-dual varieties,
  \emph{Proc. Edinb. Math. Soc. (2)} \textbf{57} (2014) 89--109.
  
\bibitem{05cayley} Arthur Cayley: On a new analytical representation of curves
  in space, \emph{The Quarterly Journal of Pure and Applied Mathematics}
  \textbf{3} (1860) 225--236.
  
\bibitem{05chowVDW} Wei Liang Chow and Bartel L. van der Waerden: Zur
  algebraischen Geometrie. IX., \emph{Math. Ann.}  \textbf{113} (1937)
  692--704.
  
\bibitem{05chow} John Dalbec and Bernd Sturmfels: Introduction to Chow forms,
  in \emph{Invariant methods in discrete and computational Geometry
    (Cura\c{c}ao 1995)}, 37--58, Kluwer Acad. Publ., Dordrecht, 1995. 
	
\bibitem{05dolgachev} Igor Dolgachev: \emph{Classical algebraic geometry: a
    modern view}, Cambridge University Press, Cambridge, 2012.
  
\bibitem{053264} David Eisenbud and Joe Harris: \textit{3264 and all that},
  Cambridge University Press, Cambridge, 2016.

\bibitem{05multidegree} Laura Escobar and Allen Knutson: The multidegree of
  the multi-image variety, in  \emph{Combinatorial Algebraic Geometry} (eds. G.G.Smith and B.Sturmfels), to appear.
  
\bibitem{05fulton} William Fulton: \emph{Intersection Theory}, Second edition,
  A Series of Modern Surveys in Mathematics, Springer-Verlag, Berlin, 1998.

\bibitem{05gkz} Israel M. Gelfand, Mikhail M. Kapranov, and Andrei
  V. Zelevinsky, \emph{Discriminants, resultants and multidimensional
    determinants}, Mathematics: Theory \& Applications. Birkh\"quser Boston,
  Inc., Boston, MA, 1994.
  
	
\bibitem{05GH} Phillip Griffiths and Joe Harris: \emph{Principles of algebraic
    geometry}, Pure and Applied Mathematics, Wiley-Interscience, John Wiley \&
  Sons, New York, 1978.

\bibitem{05harris} Joe Harris: \emph{Algebraic geometry, a first course},
  Graduate Texts in Mathematics~133, Springer-Verlag, New York, 1992.

\bibitem{05hartshorne} Robin Hartshorne: \emph{Algebraic geometry}, Graduate
  Texts in Mathematics~52, Springer-Verlag, New York-Heidelberg, 1977.

\bibitem{05holme} Audun Holme: The geometric and numerical properties of
  duality in projective algebraic geometry, \emph{Manuscripta Math.}
  \textbf{61} (1988) 145--162.
	
\bibitem{05jessop} Charles Minshall Jessop: \textit{A treatise on the line
    complex}, Cambridge University Press, Cambridge, 1903.
  
\bibitem{05star1} Kent W. Johnson, Immersion and embedding of projective
  varieties, \emph{Acta Math.} \textbf{140} (1978) 49--74.
	
\bibitem{05kleiman} Steven L. Kleiman: The transversality of a general
  translate, \emph{Compositio Math.} \textbf{28} (1974) 287--297.

\bibitem{05coisotropic} Kathl\'en Kohn: Coisotropic Hypersurfaces in the
  Grassmannian, \newline \url{arXiv:1607.05932 [math.AG]}.
  	
\bibitem{05kummer} Ernst Kummer: \"Uber die algebraischen Strahlensysteme,
  insbesondere \"uber die der ersten und zweiten Ordnung, \emph{Abhandlungen
    der K\"oniglichen Akademie der Wissenschaften zu Berlin} (1866) 1--120.
	
\bibitem{05mumford} David Mumford: \textit{The Red Book of Varieties and
    Schemes}, Lecture Notes in Math.~1358, Springer-Verlag, Berlin, 1988.

\bibitem{05petitjean} Sylvain Petitjean: The complexity and enumerative
  geometry of aspect graphs of smooth surfaces, in \emph{Algorithms in
    algebraic geometry and applications (Santander, 1994)}, 317--352,
  Progr. Math.~143, Birkh\"auser, Basel, 1996.
  
\bibitem{05piene} Ragni Piene: Some formulas for a surface in $\PP^3$, in
  \emph{Algebraic geometry (Proc. Sympos., Univ. Tromsø, Tromsø, 1977)},
  196--235, Lecture Notes in Math.~687, Springer, Berlin, 1978.
	
\bibitem{05bernd} Jean Ponce, Bernd Sturmfels, and Matthew Trager: Congruences
  and concurrent lines in multi-view geometry, \emph{Adv. in Appl. Math.}
  \textbf{88} (2017) 62--91.

\bibitem{05ran} Ziv Ran: Surfaces of order 1 in Grassmannians, \emph{J. Reine
    Angew. Math.} \textbf{368} (1986) 119--126.
  
\bibitem{05genusDegree} John G. Semple and Leonard Roth: \emph{Introduction to
    Algebraic Geometry}, Oxford, at the Clarendon Press, 1949.
  
\bibitem{05star2} Aron Simis, Bernd Ulrich, and Wolmer V. Vasconcelos: Tangent
  star cones, \emph{J. reine angew. Math.} \textbf{483} (1997) 23--59.

\bibitem{05hurwitz} Bernd Sturmfels: The Hurwitz form of a projective variety,
  \emph{J. Symbolic Comput.} \textbf{79} (2017) 186--196.
  
\bibitem{05Sturmfels} Bernd Sturmfels: Fitness, apprenticeship, and polynomials, in
   \emph{Combinatorial Algebraic Geometry} (eds. G.G.Smith and B.Sturmfels), to appear, arXiv:1612.03539
	
\bibitem{05chern} Zach Teitler: An informal introduction to computing with
  Chern classes (2004) \url{works.bepress.com/zach\_teitler/2/}
	
\bibitem{05tevelev} Evgueni A. Tevelev, \textit{Projective Duality and
    Homogeneous Spaces}, Encyclopaedia of Mathematical
  Sciences~133. Springer-Verlag, Berlin, 2005.

\bibitem{05vdWLines} Bartel L. van der Waerden: Zur algebraischen Geometrie
  II, \emph{Math. Ann.} \textbf{108} (1933) 253--259.

\end{thebibliography}
\end{document}